\title[A calculus for flow categories]{A calculus for flow categories}
\author[Andrew Lobb]{Andrew Lobb}
\address{Department of Mathematical Sciences\\ Durham University, UK}
\email{andrew.lobb@durham.ac.uk}
\author[Patrick Orson]{Patrick Orson}
\address{Department of Mathematics,
Boston College, USA}
\email{patrick.orson@bc.edu}
\author[Dirk Sch\"utz]{Dirk Sch\"utz}
\address{Department of Mathematical Sciences\\ Durham University, UK}
\email{dirk.schuetz@durham.ac.uk}
\newlength{\myhmargin} \setlength{\myhmargin}{1in} \addtolength{\myhmargin}{18pt}
\newcommand{\score}[2]{
\draw[lightgray] (0,0) -- (#2,0);
\draw[lightgray] (0,#1) -- (#2,#1);
\draw[lightgray] (0,#1+#1) -- (#2,#1+#1);
\draw[lightgray] (0,#1+#1+#1) -- (#2,#1+#1+#1);}
\newcommand{\scorex}[3]{
\draw[lightgray] (#3,0) -- (#2+#3,0);
\draw[lightgray] (#3,#1) -- (#2+#3,#1);
\draw[lightgray] (#3,#1+#1) -- (#2+#3,#1+#1);
\draw[lightgray] (#3,#1+#1+#1) -- (#2+#3,#1+#1+#1);}
\newcommand{\scoret}[2]{
\draw[lightgray] (0,0) -- (#2,0);
\draw[lightgray] (0,#1) -- (#2,#1);
\draw[lightgray] (0,#1+#1) -- (#2,#1+#1);}
\newcommand{\scoretx}[3]{
	\draw[lightgray] (#3,0) -- (#2+#3,0);
	\draw[lightgray] (#3,#1) -- (#2+#3,#1);
	\draw[lightgray] (#3,#1+#1) -- (#2+#3,#1+#1);}
\newcommand{\dota}[1]{
\draw
    (#1, 1.8) node[circle, inner sep=0.04cm, fill=black, draw] {}
}
\newcommand{\hookt}[2]{
\draw
    (#1, 1.8) node[circle, inner sep=0.04cm, fill=black, draw] {} --
    (#1, 1.5) node[left] {#2} -- 
    (#1, 1.2) node[circle, inner sep=0.04cm, fill=black, draw] {}
}
\newcommand{\hookmit}[2]{
\draw
    (#1, 1.8) node[circle, inner sep=0.04cm, fill=black, draw] {} --
    (#1, 1.5) node[right] {#2} -- 
    (#1, 1.2) node[circle, inner sep=0.04cm, fill=black, draw] {}
}
\newcommand{\hooket}[2]{
\draw
    (#1, 0) node[circle, inner sep=0.04cm, fill=black, draw] {} -- 
    (#1 + 0.8, 1.2) node[circle, inner sep=0.04cm, fill=black, draw] {} -- 
    (#1 + 0.8, 1.5) node[right] {#2} -- 
    (#1 + 0.8, 1.8) node[circle, inner sep=0.04cm, fill=black, draw] {}
}
\newcommand{\hookmiet}[2]{
\draw
    (#1 ,1.8) node[circle, inner sep=0.04cm, fill=black, draw] {} --
    (#1, 1.5) node[left] {#2} -- 
    (#1 ,1.2) node[circle, inner sep=0.04cm, fill=black, draw] {} -- 
    (#1 + 0.8, 0) node[circle, inner sep=0.04cm, fill=black, draw] {} 
}
\newcommand{\hookbet}[3]{
\draw
    (#1, 0.6) node[circle, inner sep=0.04cm, fill=black, draw] {} -- 
    (#1, 0.3) node[left] {#3} -- 
    (#1, 0) node[circle, inner sep=0.04cm, fill=black, draw] {} -- 
    (#1 + 0.8, 1.2) node[circle, inner sep=0.04cm, fill=black, draw] {} -- 
    (#1 + 0.8, 1.5) node[left] {#2} -- 
    (#1 + 0.8, 1.8) node[circle, inner sep=0.04cm, fill=black, draw] {}
}
\newcommand{\hookteb}[3]{
\draw
    (#1 ,1.8) node[circle, inner sep=0.04cm, fill=black, draw] {} --
    (#1, 1.5) node[right] {#2} -- 
    (#1 ,1.2) node[circle, inner sep=0.04cm, fill=black, draw] {} -- 
    (#1 + 0.8, 0) node[circle, inner sep=0.04cm, fill=black, draw] {} -- 
    (#1 + 0.8, 0.3) node[right] {#3} -- 
    (#1 + 0.8, 0.6) node[circle, inner sep=0.04cm, fill=black, draw] {}
}
\newcommand{\hookbe}[2]{
\draw
    (#1, 0.6) node[circle, inner sep=0.04cm, fill=black, draw] {} -- 
    (#1, 0.3) node[left] {#2} -- 
    (#1, 0) node[circle, inner sep=0.04cm, fill=black, draw] {} -- 
    (#1 + 0.8 ,1.2) node[circle, inner sep=0.04cm, fill=black, draw] {} 
}
\newcommand{\hookmibe}[2]{
\draw
    (#1, 1.2) node[circle, inner sep=0.04cm, fill=black, draw] {} -- 
    (#1 + 0.8, 0) node[circle, inner sep=0.04cm, fill=black, draw] {} -- 
    (#1 + 0.8, 0.3) node[right] {#2} -- 
    (#1 + 0.8 ,0.6) node[circle, inner sep=0.04cm, fill=black, draw] {} 
}
\newcommand{\hookbem}[3]{
	\draw
	(#1, 0.6) node[circle, inner sep=0.04cm, fill=black, draw] {} -- 
	(#1, 0.3) node[left] {#2} -- 
	(#1, 0) node[circle, inner sep=0.04cm, fill=black, draw] {} -- 
	(#1 + 0.8 ,1.2) node[circle, inner sep=0.04cm, fill=black, draw] {} --
	(#1 + 0.8, 0.9) node[right]{#3} --
	(#1 + 0.8, 0.6) node[circle, inner sep=0.04cm, fill=black, draw] {}
}
\newcommand{\dotc}[1]{
\draw
    (#1, 0.6) node[circle, inner sep=0.04cm, fill=black, draw] {}
}
\newcommand{\hookb}[2]{
\draw
    (#1, 0.6) node[circle, inner sep=0.04cm, fill=black, draw] {} -- 
    (#1, 0.3) node[left] {#2} -- 
    (#1, 0) node[circle, inner sep=0.04cm, fill=black, draw] {} 
}
\newcommand{\hookmib}[2]{
\draw
    (#1, 0.6) node[circle, inner sep=0.04cm, fill=black, draw] {} -- 
    (#1, 0.3) node[right] {#2} -- 
    (#1, 0) node[circle, inner sep=0.04cm, fill=black, draw] {} 
}
\newcommand{\hookm}[2]{
\draw
    (#1, 1.2) node[circle, inner sep=0.04cm, fill=black, draw] {} -- 
    (#1, 0.9) node[left] {#2} -- 
    (#1, 0.6) node[circle, inner sep=0.04cm, fill=black, draw] {} 
}
\newcommand{\hookmim}[2]{
	\draw
	(#1, 1.2) node[circle, inner sep=0.04cm, fill=black, draw] {} -- 
	(#1, 0.9) node[right] {#2} -- 
	(#1, 0.6) node[circle, inner sep=0.04cm, fill=black, draw] {} 
}
\newcommand{\hookx}[1]{
\draw
	(#1, 0.6) node[circle, inner sep=0.04cm, fill=black, draw] {} -- 
	(#1 + 0.8, 1.8) node[circle, inner sep=0.04cm, fill=black, draw] {} 
}
\newcommand{\hookmix}[1]{
\draw
(#1 + 0.8, 0.6) node[circle, inner sep=0.04cm, fill=black, draw] {} -- 
(#1, 1.8) node[circle, inner sep=0.04cm, fill=black, draw] {} 
}
\newcommand{\hooke}[1]{
\draw
    (#1, 0) node[circle, inner sep=0.04cm, fill=black, draw] {} -- 
    (#1 + 0.8, 1.2) node[circle, inner sep=0.04cm, fill=black, draw] {} 
}
\newcommand{\hookmie}[1]{
\draw
    (#1 + 0.8, 0) node[circle, inner sep=0.04cm, fill=black, draw] {} -- 
    (#1, 1.2) node[circle, inner sep=0.04cm, fill=black, draw] {} 
}
\newcommand{\hookem}[2]{
	\draw
	(#1, 0) node[circle, inner sep=0.04cm, fill=black, draw] {} -- 
	(#1 + 0.8, 1.2) node[circle, inner sep=0.04cm, fill=black, draw] {} --
	(#1 + 0.8, 0.9) node[right]{#2} --
	(#1 + 0.8, 0.6) node[circle, inner sep=0.04cm, fill=black, draw] {}
}
\newcommand{\hookmiem}[2]{
	\draw
	(#1, 0.6) node[circle, inner sep=0.04cm, fill=black, draw] {} --
	(#1, 0.9) node[left]{#2} --
	(#1, 1.2) node[circle, inner sep=0.04cm, fill=black, draw] {} --
	(#1 + 0.8, 0) node[circle, inner sep=0.04cm, fill=black, draw] {}

}
\newcommand{\dotb}[1]{
\draw
    (#1 ,1.2) node[circle, inner sep=0.04cm, fill=black, draw] {} 
}
\newcommand{\dotd}[1]{
\draw
    (#1, 0) node[circle, inner sep=0.04cm, fill=black, draw] {} 
}
\newcommand{\shookl}[2] {
\draw[blue]
    (#1,#2) -- (#1 - 0.3, #2 + 0.2)
}
\newcommand{\shookr}[2] {
\draw[blue]
    (#1,#2) -- (#1 + 0.3, #2 + 0.2)
}
\newtheorem{theorem}{Theorem}[section]
\newtheorem{lemma}[theorem]{Lemma}
\newtheorem{proposition}[theorem]{Proposition}
\newtheorem{corollary}[theorem]{Corollary}
\newtheorem{conjecture}[theorem]{Conjecture}
\theoremstyle{definition}
\newtheorem{definition}[theorem]{Definition}
\newtheorem{construction}[theorem]{Construction}
\newtheorem{remark}[theorem]{Remark}
\newtheorem{example}[theorem]{Example}
\newcommand{\gr}[1]{|{#1}|}
\DeclareMathAlphabet{\mathpzc}{OT1}{pzc}{m}{it}
\newcommand{\E}{\mathbb{E}}
\newcommand{\Z}{\mathbb{Z}}
\newcommand{\Q}{\mathbb{Q}}
\newcommand{\R}{\mathbb{R}}
\newcommand{\F}{\mathbb{F}}
\renewcommand{\phi}{\varphi}
\newcommand{\SO}{\mathbf{SO}}
\newcommand{\Hom}{\text{Hom}}
\newcommand{\Eta}{\text{H}}
\DeclareMathOperator{\Ob}{Ob}
\DeclareMathOperator{\PL}{PL}
\newcommand{\M}{\mathcal{M}}
\newcommand{\Mo}{\mathscr{M}}
\newcommand{\Sp}{\mathscr{S}}
\newcommand{\B}{\mathscr{B}}
\newcommand{\omax}{\overline{\max}}
\newcommand{\new}{\text{new}}
\newcommand{\cC}{\mathscr{C}}
\newcommand{\cP}{\mathscr{P}}
\newcommand{\X}{\mathcal{X}}
\DeclareMathOperator{\Sq}{Sq}
\DeclareMathOperator{\id}{id}
\begin{document}
\parindent0em
\thispagestyle{empty}

\begin{abstract}

We describe a calculus of moves for modifying a framed flow category without changing the associated stable homotopy type.  We use this calculus to show that if two framed flow categories give rise to the same stable homotopy type of homological width at most three, then the flow categories are move equivalent. The process we describe is essentially algorithmic and can often be performed by hand, without the aid of a computer program.
\end{abstract}

\maketitle


\section{Introduction}
\label{sec:introduction}

Framed flow categories were introduced by Cohen-Jones-Segal \cite{CJS} as a way of encoding flow data associated with a Morse function or a Floer functional.  To a framed flow category they associated a finite CW spectrum up to stable homotopy: a \emph{stable homotopy type}.  The idea was that Floer cohomology should be recovered as the singular cohomology of a stronger invariant taking values in stable homotopy types.

This formalism was exploited in seminal work of Lipshitz and Sarkar \cite{LipSarKhov}, who associated a framed flow category to a link diagram.  The associated stable homotopy type is a link invariant that recovers Khovanov cohomology \cite{kh1} as its reduced singular cohomology.

In this paper we collect a set of four \emph{flow category moves} - perturbation, stabilization, handle cancellation, and the extended Whitney trick (Definition \ref{defn:moves}) - which change a framed flow category without changing the associated stable homotopy type. Handle cancellation and the extended Whitney trick are named for the analogous operations in the space of Morse functions. The analogy between flow category moves and moves in Morse Theory suggests the following conjecture:

\begin{conjecture}\label{conj:main} If two framed flow categories determine the same stable homotopy type then they are move equivalent -- i.e.\ related by a finite sequence of flow category moves.
\end{conjecture}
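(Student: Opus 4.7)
The plan is to mimic the strategy of Cerf theory, which handles the analogous classification problem for Morse functions on a fixed manifold. Given two framed flow categories $\mathscr{C}_1,\mathscr{C}_2$ realizing the same stable homotopy type $X$, one would aim to reduce each to a canonical \emph{minimal} form and then relate the two minimal forms by the remaining moves. The easy direction -- that each of the four moves preserves the stable homotopy type -- should be built into the definition of the moves, so all the content lies in showing that move equivalence is at least as coarse as stable homotopy equivalence.

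First I would try to establish a minimal model theorem: any framed flow category can be reduced, via handle cancellation and the extended Whitney trick, to one in which no further cancellation is available. Handle cancellation eliminates cancelable generator pairs directly, while the extended Whitney trick is needed to remove the obstructing trajectories that prevent cancellations from being performed. One would then argue that any such minimal form has a number of objects in index $i$ controlled by the rank of $H^i(X)$ (plus necessary torsion correction terms), and is determined by $X$ up to perturbation and stabilization moves -- the latter accounting for the freedom in framing choices once the underlying chain complex is fixed.

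The main obstacle is the inductive step in the reduction: showing that if a flow category is not yet minimal, then either handle cancellation or the extended Whitney trick applies. In classical Morse theory this would invoke the Whitney trick and the $s$-cobordism theorem, which require an ambient manifold of dimension at least five and controlled fundamental group. The flow category setting has no ambient manifold, so one must extract a purely combinatorial notion of ``enough room'' for cancellations from the structure of the framed moduli spaces. In homological width at most three the combinatorics are constrained enough for this analysis to succeed, which is the content of the paper's main theorem; but at higher width the moduli spaces carry framing data in arbitrarily high dimensions, and it is genuinely unclear whether the four listed moves are sufficient or whether further moves must be added to the calculus. A secondary difficulty is matching the framings of two different minimal forms even once the underlying chain complexes agree: this will likely require large stabilizations together with a general position argument showing that the resulting framed moduli spaces can be brought into coincidence, much as in classical stable tangential framing arguments one pays for coherence by stabilizing.
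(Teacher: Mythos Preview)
The statement you are attempting is labeled a \emph{Conjecture} in the paper, and the paper does not prove it. Only the special case of homological width $\leq 3$ is established (Theorem~\ref{thm:main}). Your proposal is not a proof either, and you essentially acknowledge this: you identify the central obstacle --- the absence of an ambient manifold in which to run a Whitney-trick/$s$-cobordism argument --- and note that beyond width $3$ it is genuinely unclear whether the listed moves suffice. So there is no proof to compare; the conjecture remains open.

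It is still worth contrasting your proposed strategy with what the paper actually does in the width $\leq 3$ case, since that is the only case where a comparison is available. You propose reducing each category to a ``minimal'' form whose object counts are governed by the ranks of $H^i(X)$, and then matching two minimal forms by stabilization and perturbation. The paper does something rather different. It takes as \emph{external input} the Baues--Hennes classification of stable homotopy types of width $\leq 3$, writes down for each such type an explicit framed flow category, and then gives an algorithm (through a sequence of normal forms: primary Smith, Chang, almost Baues--Hennes, Baues--Hennes) reducing an arbitrary category to one of these explicit representatives. Uniqueness of the target is borrowed from \cite{BauHen}, not proved internally by the calculus. Your minimal-model heuristic would not suffice even in this restricted setting: the paper's canonical forms are not characterized by minimality of object counts, and the ``matching of framings'' step you flag as a secondary difficulty is in fact where most of the work lies --- it is handled by the graphical calculus of handle slides in Sections~\ref{sec:calculus}--\ref{sec:main_theorem}, not by a general-position argument after large stabilization.
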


We say that a space or spectrum has \emph{homological width $r$} if the reduced homology and cohomology is only supported in degrees $n, n+1,\dots,n+r$, for some $n\in\Z$. The \emph{homological width of a framed flow category} is the width of the associated stable homotopy type. The main result of this paper is a confirmation of Conjecture \ref{conj:main} when the framed flow categories have homological width $3$:

\begin{theorem}
	\label{thm:main}
	Suppose $\cC_1$ and $\cC_2$ are framed flow categories such that there is a stable homotopy equivalence between their associated stable homotopy types. Suppose further that the reduced homology $\widetilde{H}_*(\cC_1;\Z)$ and cohomology $\widetilde{H}^*(\cC_2;\Z)$ are each supported only in degrees $n$, $n+1$, $n+2$, $n+3$, for some $n\in\Z$. Then $\cC_1$ and $\cC_2$ are move equivalent.
\end{theorem}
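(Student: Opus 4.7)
My plan is to reduce each framed flow category $\cC_i$ to a standard or ``minimal'' form using the four moves, and then show that minimality together with the stable homotopy type determines the category up to the remaining moves, all possible because the width is small.

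\textbf{Step 1: Reduce to a minimal form via handle cancellation.} Starting from $\cC_1$, I would exhaustively apply handle cancellation to eliminate pairs of objects in adjacent degrees whose connecting $0$-dimensional moduli space contains a single framed point (so that the pair behaves like a cancelling Morse critical-point pair). By iterating, one obtains a move-equivalent category $\cC_1^{\min}$ in which no such cancellation is possible; one does the same for $\cC_2$. Since cancellation decreases the total number of objects, this process terminates. I expect that in $\cC_i^{\min}$ the number of objects in each degree matches the minimal number of generators required to realise the chain complex of the associated stable homotopy type, so the sizes of $\cC_1^{\min}$ and $\cC_2^{\min}$ agree degree by degree, possibly after one stabilization on each side to normalise any remaining torsion generators.

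\textbf{Step 2: Match up the $1$-skeleton.} In width $\leq 3$ there are only three adjacent pairs of degrees to track, and the cellular differential (equivalently, the framed $0$-dimensional moduli spaces) is determined by $\widetilde H_*(\cC_i;\Z)$ once the generators are fixed. I would use perturbation moves to alter framings on the $0$-dimensional moduli between matching pairs of generators, so that the cellular chain complexes of $\cC_1^{\min}$ and $\cC_2^{\min}$ become isomorphic by a prescribed basis-identifying isomorphism. Perturbation is the natural tool here because it acts on framings on $0$-dimensional moduli without disturbing higher-dimensional data.

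\textbf{Step 3: Match the $1$-dimensional moduli spaces via the extended Whitney trick.} After Step 2, the remaining data consists of the framed $1$-dimensional moduli spaces between generators three degrees apart and between non-adjacent pairs two degrees apart; these encode the primary and secondary stable cohomology operations (essentially $\Sq^1$, $\Sq^2$, and a higher Massey-type operation) of the underlying stable homotopy type. Because the width is $\leq 3$, no further $k$-dimensional moduli data for $k \geq 2$ can contribute independently: the attaching maps of a CW model for a width-$3$ spectrum live in stable homotopy groups $\pi_k^s$ with $k\leq 2$, and these are generated by $\eta$ and $\eta^2$ whose realisations in flow-category language live in $0$- and $1$-dimensional moduli. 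Since $\cC_1$ and $\cC_2$ have the same stable homotopy type, the corresponding counts (modulo Whitney-trick ambiguity) must agree, and I would apply extended Whitney trick moves to modify the $1$-dimensional moduli of $\cC_1^{\min}$ to match those of $\cC_2^{\min}$.

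\textbf{Main obstacle.} The hardest step will be Step 3. Showing that in width $\leq 3$ every discrepancy between two framed $1$-dimensional moduli spaces supporting the same secondary operation can actually be removed by a \emph{finite} sequence of extended Whitney tricks (rather than only in some formal limit) is the crux: it requires an explicit normal form for framings on arcs and circles in the moduli space, together with a count of how many Whitney moves are needed to convert one framing into another. I also expect a subtle bookkeeping issue at the boundary between Steps 1 and 2, namely showing that the minimal form produced by handle cancellation is unique enough that the basis identification in Step 2 is realisable by perturbations rather than requiring further handle slides, and this is where stabilization may be invoked to create room for the needed perturbations.
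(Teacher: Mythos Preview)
Your proposal has a genuine gap at Step~3. You claim that in homological width~$\leq 3$ ``no further $k$-dimensional moduli data for $k\geq 2$ can contribute independently'' and that $\eta$ and $\eta^2$ ``live in $0$- and $1$-dimensional moduli''. This is incorrect: $\eta\in\pi_1^{st}$ is realised by a framed \emph{circle} (a $1$-dimensional moduli space), and $\eta^2=\varepsilon\in\pi_2^{st}$ is realised by a framed \emph{torus} (a $2$-dimensional moduli space between objects three degrees apart). In width~$3$ these $2$-dimensional $\varepsilon$-moduli spaces are genuinely present and carry independent information: the paper's $\varepsilon$-words (Definition~\ref{def:special_word}, Example~\ref{exam:no2eps2}, Example~\ref{exam:Baues_style}, Lemma~\ref{lem:spec_eps}) are built precisely around them, and Proposition~\ref{prop:fin_BH} is devoted to sorting them out. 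Your Step~3 as written never touches this data, so the argument cannot conclude.

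There is a second, structural gap. Even restricting attention to the $1$-dimensional moduli spaces, your plan to ``match them up with extended Whitney tricks'' drastically underestimates what is required. The $\eta$- and $\xi$-edges in width~$3$ can organise themselves into \emph{cyclic} Baues--Hennes words $(w,A)$ whose invariant is a similarity class of an invertible matrix $A$ over $\F_2$; distinguishing and normalising these requires the elaborate machinery of partitions, pre-orders, admissibility, and incidence matrices that occupies Sections~\ref{sec:partitions}--\ref{sec:main_theorem}. A direct ``compare-and-fix'' between $\cC_1^{\min}$ and $\cC_2^{\min}$ would have to recover all of this. Finally, note that the paper does \emph{not} argue by directly comparing $\cC_1$ with $\cC_2$: it instead reduces each separately to a canonical Baues--Hennes form and then invokes the external classification theorem of Baues--Hennes~\cite{BauHen} to conclude that the two canonical forms coincide. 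Without that classification (or a replacement uniqueness argument for your minimal form), you have no mechanism to certify that the same stable homotopy type forces the same minimal category.
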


To prove Theorem \ref{thm:main}, we describe an algorithm for using the flow category moves to reduce a homological width $3$ flow category to one made up from a list of certain standard flow categories. The proof is then completed by an appeal to the ``uniqueness of decomposition'' component of a homotopy classification result due to Baues and Hennes \cite{BauHen}.

Baues and Hennes considered, for fixed $n\geq 4$, the homotopy types of $(n-1)$-connected $(n+3)$-dimensional polyhedra $X$. They proved that for any such $X$ there is a homotopy equivalence
\begin{equation}\label{eq:BH}
X\simeq X_1\vee X_2\vee\dots\vee X_N
\end{equation}
where the $X_i$ come from a given list of spaces that are in one-to-one correspondence with isomorphism classes of collections of algebraic data called \emph{stable $A^3_n$-systems} \cite[Def.~4.7]{BauHen}. They proved that moreover this decomposition is unique up to reordering. We call these $X_i$ the \emph{Baues-Hennes spaces}, and in Section \ref{sec:baues_hennes} we will describe a list of standard width 3 flow categories, each recovering a particular Baues-Hennes space up to stable homotopy. Our flow category reduction algorithm will take an arbitrary width 3 flow category and obtain a decomposition into these standard flow categories. The uniqueness part of the Baues-Hennes theorem can then be applied to show that if two such decomposed flow categories give homotopic spacial realisations, then these spacial realisations are the same Baues-Hennes decomposition, up to reordering. We prove in Lemma \ref{lem:def_BH} that flow categories whose spacial realisations give the same Baues-Hennes decomposition are moreover flow category move equivalent. This is enough to complete the proof of Theorem \ref{thm:main}.

On the way to constructing our flow category reduction algorithm, we obtain Corollary~\ref{cor:changclassification}, which is the homological width $2$ version of Theorem \ref{thm:main}. For $n\geq 3$, the homotopy types of $(n-1)$-connected $(n+2)$-dimensional polyhedra were first classified by Chang~\cite{Chang}, following Whitehead \cite{MR0030758}, and using the so-called Chang spaces. Though reproducing the Chang and Baues-Hennes results is not the main aim of this article, we show in Corollary~\ref{cor:changrecover} that our flow category method does give an alternative proof of Chang's classification, but only up to stable homotopy. This argument uses the result (Proposition \ref{prop:CWisflow}), which is of independent interest, that given a finite CW complex there exists a framed flow category that recovers it up to stable homotopy. Similarly our method recovers the existence of the Baues-Hennes decomposition of Equation (\ref{eq:BH}), up to stable homotopy, though we do not independently prove the uniqueness of this decomposition.

One of the attractive features of our approach to proving Theorem \ref{thm:main} is that it is algorithmic and largely based on graphical moves and calculations. Thus it often lends itself to effective practical computations by hand. At the end of this paper we apply our calculus to an example case of the Lipshitz-Sarkar framed flow category. This more computational work was continued in \cite{ALPODSn}, where we used the calculus of moves to produce a new combinatorial algorithm for computing the first two Steenrod squares in a framed flow category. Furthermore, a version of the calculus of moves has been implemented in a computer program due to the third author \cite{SchuetzKJ}.

\subsection{A rough idea of framed flow categories}
\label{subsec:rough_idea}

Let us start with just flow categories before coming to the modifier \emph{framed}. A flow category $\cC$ should be thought of as a way of keeping track of information about flowlines of a Morse function.  The object set $\Ob(\cC)$ of the category is finite and comes together with an integer grading $\gr{\cdot} : \Ob(\cC) \rightarrow \Z$.  The objects should be thought of as playing the role of the critical points of a Morse function, while the grading is just a relative Morse index.

If $x,y \in \Ob(\cC)$ are objects with $x \not= y$ then the set of morphisms from $x$ to $y$ is written $\M(x,y)$ and has the structure of a compact manifold-with-corners of dimension $\gr{x}- \gr{y} - 1$ (in particular when $\gr{y} \geq \gr{x}$, there are no morphisms).  This should be thought of as the unparametrized moduli space of flowlines from $x$ to $y$.

Flow categories satisfy certain axioms arising from the analogy with Morse theory.  For example the boundary of a moduli space of flowlines should be given by the broken flowlines
\[ \partial \M(x,y) = \bigcup_{z \in \Ob{\cC}} \M(z,y) \times \M(x,z) {\rm .} \]
Breaking at multiple intermediate objects is what gives rise to the cornered structure of the moduli spaces.

A flow category is called \emph{framed} when its moduli spaces come with framed embeddings into cornered Euclidean space, in a way compatible with the cornered structure of the moduli spaces.  The easiest case to visualize is when $\M(x,y)$ is a closed manifold.  In this case one should just think of $\M(x,y)$ as being embedded in some Euclidean space with a framing of the normal bundle.

\subsection{Framed flow category moves}
\label{subsec:fcc_conjecture}
Framed flow categories can be modified by perturbing the framed embeddings of the moduli spaces, or by increasing the dimension of the ambient cornered Euclidean spaces for these embeddings, while leaving the maps essentially unchanged. These operations, called \emph{perturbation} and \emph{stabilization}, do not alter the associated stable homotopy type \cite[Lemmas 3.25, 3.26]{LipSarKhov}.

In addition to these basic modifications, previous work of the authors (together with Jones) gives two further moves which do not alter the associated stable homotopy type. Together, we have the following four moves, whose definitions are recalled in Sections \ref{sec:stabpert}, \ref{sec:handlecancellation}, and \ref{sec:extendedwhitney}.

\begin{definition}
	\label{defn:moves}
The \emph{flow category moves} are:
\begin{enumerate}
	\item\label{item:perturbation} \textbf{Perturbation.}
	\item\label{item:stabilization} \textbf{Stabilization}.
	\item\label{item:handle} \textbf{Handle cancellation} (see \cite[Theorem 2.17]{JLS2}).
	\item\label{item:whitney} \textbf{Extended Whitney trick} (see \cite[\textsection 3.3]{ALPODS2}).
\end{enumerate}

The equivalence relation on the set of framed flow categories given by the transitive closure of the flow category moves is called \emph{move equivalence}.\footnote{In \cite[\textsection 2.2]{ALPODS2}, we also defined a move called a \textbf{handle slide} (see Section \ref{sec:handleslide} for definition), but this move can be derived from handle cancellation so does not appear as an additional flow category move.}
\end{definition}

The stable homotopy type is an invariant of the move equivalence class of a framed flow category.

\subsection{Graph notation}
\label{subsec:graph_notation}
Framed flow categories can be represented graphically by drawing the graph of the underlying category and decorating the edges with the corresponding framed moduli spaces. Many of the tools developed in this paper take the form of a graphical calculus based on the flow category moves.

\begin{definition}
	The graph $\Gamma=\Gamma(\cC,\iota,\phi)$ of a framed flow category is the graph with a vertex $v_a$ for each object $a$ of $\cC$ and an edge $e_{a,b}$ between $v_a$ and $v_b$ whenever the moduli space $\M(a,b)$ is non-empty. The vertices and edges of the graph are decorated with the corresponding objects and framed moduli spaces respectively.
	
	The length of an edge $e_{a,b}$ is defined by $||a|-|b||$. Given the graph $\Gamma$ of a framed flow category, the subgraph obtained by removing all edges of length greater than $n$ is written $\Gamma_n=\Gamma_n(\cC,\iota,\phi)$.
\end{definition}

This graphical description enables us to give an example for the reader new to flow categories.

\begin{example}\label{exam:Adem_relation}
	Consider the graph in Figure \ref{fig:Adem_relation}.  In this figure the number $2$ represents two positively framed points, while $\eta$ represents a circle with a certain framing. The objects are in five adjacent gradings.  Is it possible that this is the graph $\Gamma_2$ of a framed flow category $\cC$?
	\begin{figure}[ht]
		\begin{center}
			\begin{tikzpicture}
			\draw
			(0, 1) node[circle, inner sep=0.04cm, fill=black, draw] {} -- 
			(0, 0.5) node[left] {$2$} -- 
			(0, 0) node[circle, inner sep=0.04cm, fill=black, draw] {} -- 
			(1,2) node[circle, inner sep=0.04cm, fill=black, draw] {};
			\draw
			(0,1) --
			(-1,3) node[circle, inner sep=0.04cm, fill=black, draw] {} -- 
			(-1,3.5) node[left] {$2$} --
			(-1,4) node[circle, inner sep=0.04cm, fill=black, draw] {} --
			(1,2);
			\draw (-0.3,0) node{$e$};
			\draw (-0.3,1) node{$d$};
			\draw (1.3,2) node{$c$};
			\draw (-1.3,3) node{$b$};
			\draw (-1.3,4) node{$a$};
			\draw (0.8,1) node{$\eta$};
			\draw (-0.8,2) node{$\eta$};
			\draw (0.3,3) node{$\eta$};
			\end{tikzpicture}
			\caption{The graph $\Gamma_2$ of a potential flow category $\cC$.}
			\label{fig:Adem_relation} 
		\end{center}
	\end{figure}
	
	Indeed there is a way to add in the missing edges of length greater than $2$ to form a framed flow category $\cC$, but this is not obvious.  The general problem is that if one has framed all moduli spaces of dimension less than $n$, then this determines the framing of the \emph{boundaries} of all $n$-dimensional moduli spaces. To  extend the framings to the interior, these boundary framings would have to be null-cobordant!  We shall return to this example later when we are more precise about $\eta$.
\end{example}

\subsection{Plan of attack}
\label{subsec:attack_plan}
We conclude the introduction by giving a user's guide to the rest of the paper, the majority of which is aimed at proving Theorem \ref{thm:main}.

In Section \ref{sec:flow_framed_categories}, we give the definitions required for working with framed flow categories, and discuss stably framed smooth manifolds in low dimensions (since these make up the moduli spaces with which we shall later be working).  We revisit Example \ref{exam:Adem_relation} in the light of this discussion.

Section \ref{sec:calculus} gives us the tools which form the heart of many of the arguments which follow.  We use our four flow category moves to build a small library of operations on flow categories, that can be understood graphically.  For an example of this in action, jump forward to Lemma \ref{lem:trick1}.  The operation which we wish to add to our library is the graphical move depicted in the statement of the lemma.  The proof that this can be decomposed as a product of flow category moves follows the sequence of diagrams at the end of the proof of the lemma.

Section \ref{sec:musical_scores} further refines the notion of a diagram depicting a framed flow category, in a way particularly well suited to those categories whose objects appear only in~4 adjacent gradings. 

Sections \ref{sec:chang} and \ref{sec:baues_hennes} contain those flow categories that we are aiming for.  More specifically, for each stable homotopy type of homological width at most $3$ we give a specific framed flow category realizing it. For a nice example of such a specific framed flow category see the diagram immediately following Definition \ref{defn:bh_flow_category}.

The idea of the paper is then, given any framed flow category of the homological widths we consider, to apply our moves and operations to it in order to arrive at one of the specific framed flow categories that we have given.  In the case of homological width~$2$ this is relatively straightforward and is achieved already in Theorem \ref{thm:chang}. We dedicate Sections \ref{sec:partitions} and \ref{sec:main_theorem} to the homological width $3$ case.

Finally, in Section \ref{sec:algorithmic}, we discuss how the ideas in the paper give rise to an algorithmic approach for computation in framed flow categories, something that the third author has implemented in a computer program \cite{SchuetzKJ}.  An example is given of the computation of the Lipshitz-Sarkar stable homotopy type of a certain knot.

\subsection*{Acknowledgements}
We thank the referees for their careful reading and helpful suggestions. We also thank Taketo Sano for alerting us to a problem in an earlier version of Lemma~\ref{lem:trick1}. The authors were partially supported by the EPSRC Grant EP/K00591X/1. PO was partially supported by a CIRGET postdoctoral fellowship.

\section{Flow categories and framed manifolds}
\label{sec:flow_framed_categories}
We recall the necessary definitions to talk about framed flow categories. We then collect some standard facts about framed bordism groups that will be used throughout the article.

\subsection{Framed flow categories} Framed flow categories first appeared in \cite{CJS} and were further developed in \cite{MR2597734} and \cite{LipSarKhov}. We will work with the description given by Lipshitz and Sarkar in \cite{LipSarKhov}.

A smooth manifold with corners is defined in the same way as an ordinary smooth manifold, except that the differentiable structure is now modelled on the open subsets of the $k$-fold product $[0,\infty)^k$. To describe framed flow categories, first we need a restricted class of manifolds with corners originally defined by J\"{a}nich \cite{ich}, and further developed by Laures \cite{Laures}. 

If $X$ is a smooth manifold with corners and $x\in X$ is represented by $(x_1,\ldots,x_k)\in [0,\infty)^k$, let $c(x)$ be the number of coordinates in this $k$-tuple which are $0$. Denote by
\begin{eqnarray*}
	\partial^iX&=&\{x\in X\,|\,c(x)=i\}
\end{eqnarray*}
the codimension-$i$-boundary. Note that $x$ belongs to at most $c(x)$ different connected components of $\partial^1 X$. We call $X$ a \em smooth manifold with faces \em if every $x\in X$ is contained in the closure of exactly $c(x)$ components of $\partial^1X$. A \em connected face \em is the closure of a component of $\partial^1 X$, and a \em face \em is any union of pairwise disjoint connected faces (including the empty face). Note that every face is itself a manifold with faces. We define the boundary of $X$, $\partial X$, as the closure of $\partial^1 X$.

\begin{definition}
	Let $k,n$ be non-negative integers and $X$ a smooth manifold with faces. An \em $n$-face structure \em for $X$ is an ordered $n$-tuple $(\partial_1 X,\ldots, \partial_n X)$ of faces of $X$ such that
	\begin{enumerate}
		\item $\partial_1 X\cup \cdots \cup \partial_nX = \partial X$.
		\item $\partial_i X\cap \partial_j X$ is a face of both $\partial_i X$ and $\partial_j X$ for $i\not=j$.
	\end{enumerate}
	A smooth manifold with faces $X$ together with an $n$-face structure is called a \em smooth $\langle n \rangle$-manifold\em.
	
	If $a=(a_1,\ldots,a_n)\in \{0,1\}^n$, we define
	\begin{eqnarray*}
		X(a)&=&\bigcap_{i\in \{j\,|\,a_j=0\}} \partial_i X
	\end{eqnarray*}
	and note that this is an $\langle |a| \rangle$-manifold, where $|a|=a_1+\cdots+ a_n$. If $a=(1,\ldots,1)$ we interpret the empty intersection as $X$.
\end{definition}

There is an obvious partial order $\leq$ on $\{0,1\}^n$ such that $X(a)\subset X(b)$ for $a\leq b$.

\begin{definition} \label{euclidcorners}
	Given an $(n+1)$-tuple $\mathbf{d}=(d_0,\ldots,d_n)$ of non-negative integers, let
	\begin{eqnarray*}
		\E^\mathbf{d}&=&\R^{d_0}\times [0,\infty) \times \R^{d_1}\times [0,\infty) \times \cdots \times [0,\infty) \times \R^{d_n}.
	\end{eqnarray*}
	Furthermore, if $0\leq a < b \leq n+1$, we denote $\E^\mathbf{d}[a:b]=\E^{(d_a,\ldots,d_{b-1})}$.
\end{definition}

We can turn $\E^\mathbf{d}$ into an $\langle n\rangle$-manifold by setting
\begin{eqnarray*}
	\partial_i \E^\mathbf{d}&=& \R^{d_0}\times \cdots \times \R^{d_{i-1}} \times \{0\} \times \R^{d_i} \times \cdots \times \R^{d_n}.
\end{eqnarray*}
We will refer to this boundary part as the \em $i$-boundary\em. In the case of $\E^\mathbf{d}[a:b]$ we also refer to the set
\begin{eqnarray*}
	\partial_{i-a} \E^\mathbf{d}[a:b]&=& \R^{d_a}\times \cdots \times \R^{d_{i-1}} \times \{0\} \times \R^{d_i} \times \cdots \times \R^{d_{b-1}}
\end{eqnarray*}
as the $i$-boundary, although strictly speaking this should be the $(i-a)$-boundary.

\begin{definition}
	A \em neat immersion $\imath$ \em of an $\langle n \rangle$-manifold is a smooth immersion $\imath\colon X \looparrowright \E^\mathbf{d}$ for some $\mathbf{d}\in \Z^{n+1}$ such that
	\begin{enumerate}
		\item For all $i$ we have $\imath^{-1}(\partial_i\E^\mathbf{d})=\partial_i X$.
		\item The intersection of $X(a)$ and $\E^\mathbf{d}(b)$ is perpendicular for all $b<a$ in $\{0,1\}^n$.
	\end{enumerate}
	A \em neat embedding \em is a neat immersion that is also an embedding.
	
	Given a neat immersion $\imath\colon X \looparrowright \E^\mathbf{d}$ we have a normal bundle $\nu_{\imath(a)}$ for each immersion $\imath(a)\colon X(a) \looparrowright \E^\mathbf{d}(a)$ as the orthogonal complement of the tangent bundle of $X(a)$ in $T\E^\mathbf{d}(a)$.
\end{definition}

\begin{definition}
	A \em flow category \em is a pair $(\cC,\gr{\cdot})$ where $\cC$ is a category with finitely many objects $\Ob=\Ob(\cC)$ and $\gr{\cdot}\colon \Ob \to \Z$ is a function, called the \em grading\em, satisfying the following:
	\begin{enumerate}
		\item $\Hom(x,x)=\{\id\}$ for all $x\in \Ob$, and for $x\not=y \in \Ob$, $\Hom(x,y)$ is a smooth, compact $(\gr{x}-\gr{y}-1)$-dimensional $\langle \gr{x}-\gr{y}-1\rangle$-manifold which we denote by $\mathcal{M}(x,y)$.
		\item For $x,y,z\in \Ob$ with $\gr{z}-\gr{y}=m$, the composition map
		$$\circ\colon \mathcal{M}(z,y) \times \mathcal{M}(x,z) \to \mathcal{M}(x,y)$$
		is an embedding into $\partial_m\mathcal{M}(x,y)$. Furthermore,
		\begin{eqnarray*}
			\circ^{-1}(\partial_i \mathcal{M}(x,y))&=&\left\{ \begin{array}{lr}
				\partial_i \mathcal{M}(z,y)\times \mathcal{M}(x,z) & \mbox{for }i<m \\
				\mathcal{M}(z,y)\times \partial_{i-m}\mathcal{M}(x,z) & \mbox{for }i>m
			\end{array}
			\right.
		\end{eqnarray*}
		\item For $x\not= y\in \Ob$, $\circ$ induces a diffeomorphism
		\begin{eqnarray*}
			\partial_i\mathcal{M}(x,y)&\cong & \coprod_{z,\,\gr{z}=\gr{y}+i} \mathcal{M}(z,y) \times \mathcal{M}(x,z).
		\end{eqnarray*}
		
	\end{enumerate}
	We also write $\mathcal{M}_\cC(x,y)$ if we want to emphasize the flow category. The manifold $\mathcal{M}(x,y)$ is called the \em moduli space from $x$ to $y$\em, and we also set $\mathcal{M}(x,x)=\emptyset$.
\end{definition}

\begin{definition} \label{def:neat_immersion}
	Let $\cC$ be a flow category and $\mathbf{d}=(d_A,\ldots,d_{B-1})\in \Z^{B-A}$ a sequence of non-negative integers with $A\leq \gr{x} \leq B$ for all $x\in \Ob(\cC)$. A \em neat immersion $\imath$ \em of the flow category $\cC$ relative $\mathbf{d}$ is a collection of neat immersions $\imath_{x,y}\colon \mathcal{M}(x,y)\looparrowright \E^\mathbf{d}[\gr{y}:\gr{x}]$ for all objects $x,y$ such that for all objects $x,y,z$ and all points $(p,q)\in \mathcal{M}(z,y)\times \mathcal{M}(x,z)$ we have
	\begin{eqnarray*}
		\imath_{x,y}(p\circ q)&=&(\imath_{z,y}(p),0,\imath_{x,z}(q)).
	\end{eqnarray*}
	The neat immersion $\imath$ is called a \em neat embedding\em, if for all $i,j$ with $A\leq j<i\leq B$ the induced map
	$$
	\imath_{i,j}\colon \coprod_{(x,y),\gr{x}=i,\gr{y}=j}\mathcal{M}(x,y) \to \E^\mathbf{d}[j:i]
	$$
	is an embedding.
\end{definition}

\begin{definition}
	Let $\imath$ be a neat immersion of a flow category $\cC$ relative $\mathbf{d}$. A \em coherent framing $\varphi$ \em of $\imath$ is a framing for the normal bundle $\nu_{\imath_{x,y}}$ for all objects $x,y$, such that the product framing of $\nu_{\imath_{z,y}}\times \nu_{\imath_{x,z}}$ equals the pullback framing of $\circ^\ast \nu_{\imath_{x,y}}$ for all objects $x,y,z$.
	
	A \em framed flow category \em is a triple $(\cC,\imath,\varphi)$, where $\cC$ is a flow category, $\imath$ a neat immersion and $\varphi$ a coherent framing of $\imath$. We will omit the neat immersion and the coherent framing from the notation if it does not cause confusion.
\end{definition}

Given a framed flow category $\cC$, the \emph{Cohen--Jones--Segal construction} can be used to build a stable homotopy type $\mathcal{X}(\cC)$. This is some precise number of desuspensions of a finite CW complex with cells corresponding to the objects of $\cC$ and glueing maps determined in some precise way by the framed embedded moduli spaces. We will not repeat this construction here, but refer the reader to \cite[Definition 3.23]{LipSarKhov}  for the definition and basic properties.

\subsection{Framed manifolds}
\label{sec:framed_manifolds}

The abelian group $\Omega^{fr}_n$ of framed cobordism classes of closed, framed, smooth $n$-manifolds coincides via Pontryagin-Thom construction with the $n$-th stable homotopy group $\pi_n^{st}$. For some small values of $n$ we need to understand the generators of these groups. Clearly, $\Omega^{fr}_0\cong \Z$ is generated by a framed point.

For $n=1$ we get that $\Omega_1^{fr}\cong \Z/2\Z$ is generated by a non-trivially framed circle. This non-trivial framing can be obtained by pulling back a framed point in $S^2$ via the Hopf map $h\colon S^3\to S^2$ and stabilizing. We shall call this non-trivially framed circle $\eta$, in line with the standard notation for the non-trivial element $\eta\in \pi_1^{st}$.

For $n=2$ we get $\Omega_2^{fr}\cong \Z/2\Z$ is generated by the torus $\eta^2$ which we also call $\varepsilon$, to avoid confusion later on with words used for Baues-Hennes spaces. A framed surface $S$ is orientable, so if we have an embedded circle $C$ in $S$, its normal bundle in $S$ is trivial and can therefore be used to get a framing of $C$. Circles determining a symplectic basis for $H_1(S;\Z/2\Z)$, together with these induced framings, can be used to calculate the Arf invariant of $S$, and in particular whether the surface as a whole is framed trivially or not.

We will also need a relative version. Assume we have two non-trivially framed circles embedded in some $\R^m$ with $m\geq 4$. Since together they represent the trivial element in $\Omega^{fr}_1\cong\Z/2\Z$, we can find a framed cobordism $W$ which bounds them in $\R^{m}\times [0,\infty)$. Using a relative version of the Pontryagin-Thom construction, we see that between two non-trivially framed circles there are exactly two framed cobordisms $W$, up to framed cobordisms which are fixed near the boundary. As we can do framed surgeries along trivially framed circles in $W$, we see that both of these framed cobordisms can be represented by a cylinder. Furthermore, adding the non-trivially framed torus $\varepsilon$ to one gives the other.

\begin{remark}\label{rem:detect_triv_circle}
Given a framed circle $C$ in some $\R^m$ and a point on $C$, there is exactly one tangential direction which together with the framing gives an element of $\SO(m)$. Varying the point in $C$ gives an element $H_1(\SO(m))\cong \Z/2\Z$ with the property that $C$ is trivially framed if and only if this element is non-trivial in $H_1(\SO(m))$, compare \cite[\textsection 3.2]{LipSarSq}.
\end{remark}

\begin{remark}\label{rem:extend_framing}
Given a framed flow category $\cC$ and two objects $a,b\in \Ob(\cC)$, the boundary of $\M(a,b)$ is embedded into some cornered Euclidean space $\R^m\times \partial ([0,\infty)^k)$, where $k = \gr{a}-\gr{b}-1$ and $\partial ([0,\infty)^k)$ consists of those points in $[0,\infty)^k$ which do not have a neighborhood homeomorphic to $\R^k$. Because of the transversality conditions associated to the framed embedding, projection gives an immersion $\partial\M(a,b)\to \partial ([0,\infty)^k)$. Since we can smoothen $\partial ([0,\infty)^k)$ to $\R^{k-1}$, we can also smoothen $\partial\M(a,b)$ to a closed $(k-1)$-dimensional manifold.
Also, since the framing is orthogonal to $\partial ([0,\infty)^k)$, $\partial\M(a,b)$ is in fact a framed manifold, with the framing of $\M(a,b)$ being a framed null-cobordism.

Given a flow category, one may try to frame it by induction on the dimension of the moduli spaces based on whether the framing of $\partial \M(a,b)$ is trivial.
\end{remark}

Let us revisit Example \ref{exam:Adem_relation} and consider it more deeply.

\begin{example}\label{exam:Adem_relation2}
Consider again the graph in Figure \ref{fig:Adem_relation}.  We shall now see that this is the graph $\Gamma_2$ of a framed flow category $\cC$ as follows.

The moduli spaces $\M(a,d)$ and $\M(b,e)$ have as boundary two copies of $\eta$ each (where now by $\eta$ we understand the non-trivially framed circle).  We can therefore choose both of these moduli spaces to be a framed cylinder. It is easy to see that $\M(d,e)\times \M(a,d)\cup \M(b,e)\times \M(a,b)$ consists of four cylinders which form a single torus. 

Furthermore, if we form a circle in this torus by traversing in the cylindrical direction of the moduli spaces $\M(a,d)$ and $\M(b,e)$, we see that this circle is non-trivially framed. This follows from Remark \ref{rem:detect_triv_circle}, as we go twice through each cylinder. It follows that this torus represents $\varepsilon\in \Omega_2^{fr}$.

Note that $\M(c,e)\times \M(a,c)$ gives another non-trivially framed torus, and therefore we can extend $\partial\M(a,e)$ to a framed cobordism between these two tori, leading to a framed flow category.

It is worth pointing out that $\M(d,e)\times \M(a,d)\cup \M(b,e)\times \M(a,b)$ and $\M(c,e)\times \M(a,c)$ are embedded quite differently into the cornered Euclidean space, so it is not clear whether there should be a natural null-cobordism. In fact, by adding a generator of $\Omega_3^{fr}$ via disjoint union to $\M(a,e)$, we can change the value of $\Sq^4$ in this flow category. So the stable homotopy type of $\cC$ is not uniquely determined by $\Gamma_2$ alone.
\end{example}

\section{A calculus of flow category moves}
\label{sec:calculus}

We now describe in detail the equivalence relation on framed flow categories, which we call \emph{move equivalence}. Move equivalent framed flow categories have the same stable homotopy type. This equivalence relation is the transitive closure of several moves that have been defined in various papers. For the convenience of the reader we now recall these moves.

\subsection{Stabilization and Perturbation}\label{sec:stabpert}

Let $(\cC,\imath,\varphi)$ be a framed flow category. Recall that $\imath$ is a collection of immersions $\imath_{x,y}\colon \M(x,y) \looparrowright \E^\mathbf{d}[\gr{y}:\gr{x}]$ into cornered Euclidean space, and $\varphi$ is a collections of framings of these immersions. 

A \emph{stabilization} of $(\cC,\imath,\varphi)$ is a framed flow category $(\cC,\imath',\varphi')$ where $\imath'_{x,y}\colon \M(x,y) \looparrowright \E^\mathbf{d'}[\gr{y}:\gr{x}]$ immerses into a higher-dimensional Euclidean space with $\mathbf{d'} = (d_A+e_A,\ldots,d_{B-1}+e_{B-1})\in \Z^{B-A}$ with all $e_i\geq 0$, and projection to the new dimensions is constant $0$. The framing $\varphi'$ agrees with $\varphi$ on the old dimensions, and is the standard framing in the new dimensions.

A $1$-parameter family of framings of a flow category $\cC$ is a collection $(\imath(t),\varphi(t))$ for every $t\in [0,1]$, such that the $\imath(t)$ are neat immersions into some $\E^\mathbf{d}$, smoothly varying in $t$, and the $\varphi(t)$ are smoothly varying coherent framings of the $\imath(t)$. We say that the framed flow category $(\cC,\imath',\varphi')$ is a \emph{perturbation} of $(\cC,\imath,\varphi)$, if there is a $1$-parameter family of framings between stabilizations of $(\imath',\varphi')$ and $(\imath,\varphi)$.  

The stable homotopy type associated to a framed flow category $(\cC,\imath,\phi)$ is denoted by~$\X(\cC)$. Its definition uses the fact that every framing $(\imath,\varphi)$ can be perturbed to a framing $(\imath',\varphi')$ with $\imath'$ a neat embedding, and that the stable homotopy type does not depend on the perturbation, see \cite[Lms.\ 3.25, 3.26]{LipSarKhov}. 

\subsection{Handle Cancellation}\label{sec:handlecancellation}

Let $\cC$ be a framed flow category and assume we have two objects $x,y\in \Ob(\cC)$ such that $|x|=|y|+1$ and $\M(x,y)$ consists of a single point.

\begin{definition} \label{cancelledcat}
	Denote by $\cC_H$ the flow category whose object set is given by
	\[ 
	\Ob(\cC_H) =  \Ob(\cC) \setminus \{ x,y \} 
	\]
	and whose moduli spaces are given by
	\[
	\mathcal{M}_{\cC_H}(a,b) = \mathcal{M}_\cC(a,b)\cup_f \big( \mathcal{M}_\cC(x,b)\times \mathcal{M}_\cC(a,y) \big)
	\]
	where $f$ identifies the subsets 
	\[
	\mathcal{M}_\cC(x,b)\times \mathcal{M}_\cC(a,x)\cup \mathcal{M}_\cC(y,b)\times \mathcal{M}_\cC(a,y) \subset \mathcal{M}_\cC(a,b)
	\]
	and
	\begin{multline*}
	\mathcal{M}_\cC(x,b)\times (\mathcal{M}_\cC(x,y)\times \mathcal{M}_\cC(a,x)) \cup (\mathcal{M}_\cC(y,b)\times \mathcal{M}_\cC(x,y))\times \mathcal{M}_\cC(a,y) \\ \subset \mathcal{M}_\cC(x,b)\times \mathcal{M}_\cC(a,y).
	\end{multline*}
	We call $\cC_H$ the \emph{cancelled category} (relative to $x$ and $y$) of $\cC$.
\end{definition}

We refer the reader to \cite[\textsection 2]{JLS2} for the embedding and framing of $\cC_H$, so that $\X(\cC_H)\simeq \X(\cC)$.

\subsection{Extended Whitney Trick}\label{sec:extendedwhitney}

In \cite[\S 2.2]{JLS2} a Whitney trick was introduced, which removes two oppositely framed points in a $0$-dimensional moduli space. This was generalized in \cite[\S 4]{ALPODS2} to changing moduli spaces up to framed cobordism relative to the boundary. We will now describe this extended Whitney trick, but refer the reader to \cite[\textsection 4]{ALPODS2} for the definition of the framings.

\begin{definition}
Let $\cC$ be a framed flow category and assume we have objects $x,y\in \Ob(\cC)$ such that $M = \M(x,y)$ is framed cobordant relative to the boundary to a framed manifold $M'$ via a framed cobordism $W$.

Then there is a framed flow category $\cC_W$ with $\Ob(\cC_W)= \Ob(\cC)$ and the moduli spaces are given as follows.
\begin{enumerate}
	\item $\M_{\cC_W}(x,y) = M'$.
	\item For $a\in \Ob(\cC)$ with $\M_\cC(a,x)\not=\emptyset$ we get
	\[
		\M_{\cC_W}(a,y) = W \times \M_\cC(a,x) \cup_g \M(a,y)
	\]
	where $g$ identifies the two copies of $M\times \M_\cC(a,x)$ in $W\times \M_\cC(a,x)$ and $\partial \M(a,y)$.
	\item For $b\in \Ob(\cC)$ with $\M_\cC(y,b)\not=\emptyset$ we get
	\[
		\M_{\cC_W}(x,b) = \M_\cC(y,b) \times W \cup_g \M_\cC(x,b)
	\]
	where $g$ identifies the two copies of $\M_\cC(y,b)\times M$ in $\M_\cC(y,b) \times W$ and $\partial \M_\cC(x,b)$.
	\item For $a,b\in \Ob(\cC)$ with both $\M_\cC(a,x)\not=\emptyset \not= \M_\cC(y,b)$ the moduli space $\M_{\cC_W}(a,b)$ is obtained by using an appropriate cobordism between
	\[
		\M_{\cC_W}(x,b) \times \M_\cC(a,x) \cup \M_\cC(y,b)\times \M_{\cC_W}(a,y),
	\]
	see \cite[Def.4.5]{ALPODS2} for details.
	\item In all other cases $\M_{\cC_W}(a,b)=\M_\cC(a,b)$.
\end{enumerate}
\end{definition}

We refer the reader to \cite[\S 4]{ALPODS2} for the framings of $\cC_W$ and the fact that $\X(\cC_W)\simeq \X(\cC)$.

\subsection{Handle Sliding}\label{sec:handleslide}

The handle slide was introduced in \cite[\S 3]{ALPODS2} as a consequence of two handle cancellations. It is however a very useful move, and in practice one of the most commonly used moves. We therefore list it here as well.

\begin{definition}
	Let $(\cC,\imath,\varphi)$ be a framed flow category and $x\not=y$ be objects with $|x|=|y|$. Then the \emph{$(\pm)$-handle slide of $x$ over $y$} is the framed flow category $(\cC_S,\imath_S,\varphi_S(\pm))$ defined as follows. We have $\Ob(\cC_S) = \Ob(\cC)$ and the moduli spaces are given by
	\begin{align*}
	\M_{\cC_S}(x,b)&= \M_\cC(x,b)\sqcup\M_\cC(y,b),\\
	\M_{\cC_S}(a,y)&= \M_\cC(a,x)\sqcup\M_\cC(a,y),
	\end{align*}
	In all other cases 
	\[
	\M_{\cC_S}(a,b) = \M_\cC(a,b) \sqcup \left(\M_\cC(y,b) \times [0,1] \times \M_\cC(a,x)\right).
	\]
	In the case of a $(+)$-handle slide the new copy of $\M_\cC(y,b)$ in $\M_{\cC_S}(x,b)$ keeps its framing, while the new copy of $\M_\cC(a,x)$ in $\M_{\cC_S}(a,y)$ gets the opposite framing. In the case of a $(-)$-handle slide these roles are reversed.
\end{definition}

See \cite[\S3]{ALPODS2} for details on the effects of the framings, and that $\X(\cC)\simeq \X(\cC_S)$. In terms of the graph notation we can interpret a handle slide as follows.
\begin{center}
\begin{tikzpicture}
\node at (0.3,0) {$\cC\colon$};
\dotd{1};
\dotd{3};
\draw (1,1) -- (1,-1);
\draw (3,1) -- (3,-1);
\node at (0.8,0.5) {$\alpha$};
\node at (0.8,-0.5) {$\gamma$};
\node at (3.2,0.5) {$\beta$};
\node at (3.2,-0.5) {$\delta$};
\node at (4.3,0) {$\cC_S\colon$};
\node at (1.3,0) {$x$};
\node at (2.7,0) {$y$};
\dotd{5};
\dotd{7};
\draw (5,1) -- (5,-1);
\draw (7,1) -- (7,-1);
\draw (5.1,1) -- (7,0);
\draw (5,0) -- (6.9,-1);
\node at (5.3,0) {$x$};
\node at (6.7,0) {$y$};
\node at (4.8,0.5) {$\alpha$};
\node at (4.8,-0.5) {$\gamma$};
\node at (7.2,0.5) {$\beta$};
\node at (7.2,-0.5) {$\delta$};
\node at (6.3,0.6) {$\mp \alpha$};
\node at (5.5,-0.6) {$\pm \delta$};
\end{tikzpicture}
\end{center}

\subsection{Move equivalence and basic consequences}

\begin{definition}
	Two framed flow categories $(\cC,\imath,\varphi)$ and $(\cC',\imath',\varphi')$ are called \emph{move equivalent}, if there exists a finite sequence of framed flow categories $(\cC_1,\imath_1,\varphi_1),\ldots,$ $(\cC_k,\imath_k,\varphi_k)$ with $(\cC_1,\imath_1,\varphi_1) = (\cC,\imath,\varphi)$, $(\cC_k,\imath_k,\varphi_k) = (\cC',\imath',\varphi')$, and for $i=1,\ldots,k-1$ we have that $(\cC_i,\imath_i,\varphi_i)$ and $(\cC_{i+1},\imath_{i+1},\varphi_{i+1})$ are related by either a perturbation, a handle cancellation, or an extended Whitney trick.
	
	We write $(\cC,\imath,\varphi)\sim (\cC',\imath',\varphi')$ or simply $\cC \sim \cC'$ for move equivalent framed flow categories.
\end{definition} 

\begin{definition}
	A framed flow category $\cC$ is of \emph{homological width $r$} if the reduced homology $\widetilde{H}_*(\cC;\Z)$ and cohomology $\widetilde{H}^\ast(\cC;\Z)$ are only supported in degrees $n$, $n+1$,\dots, $n+r$, for some $n\in\Z$. We call the framed flow category \emph{trivial}, if its reduced integral homology vanishes.

	The framed flow category is of \emph{width $r$} if for all objects $x\in \Ob(\cC)$ we have $\gr{x}\in \{n,n+1,\ldots,n+r\}$ for some $n\in \Z$.
\end{definition}

Recall that given a finitely generated free chain complex over $\Z$ there is a basis inducing direct sum decompositions $C_r\cong U_r\oplus V_r$, such that the differentials have matrices of the form\[d_r=\left(\begin{matrix}0&0\\D&0\end{matrix}\right)\colon U_r\oplus V_r\to U_{r-1}\oplus V_{r-1},\]where $D$ is injective and diagonal with $D_{ii}|D_{i+1\,i+1}$. If we are moreover allowed to add or remove cancelling generators in adjacent homological degrees these matrices can be changed so that the diagonal entries of $D$ are, up to sign, all prime powers different from $\pm1$. Call such a basis the \emph{primary Smith normal form} for the chain complex.

Observe that in this form the basis elements of $V_r$ are a basis for $H_r(C)$ and the images of the basis elements of $U_{r+1}$ under $d_{r+1}$ are exactly the relators presenting the primary decomposition of the torsion part of the abelian group $H_r(C)$.

A framed flow category $\cC$ determines a based chain complex $(C_*,d)$. The basis of $C_r$ is given by the objects $a$ of $\cC$ with $|a|=r$. If $|a|=r$ and $|b|=r-1$ then the $(a,b)$ entry in the matrix of $d_{r-1}$ is given by the signed count of the points in $\M(a,b)$.

\begin{definition}
	A framed flow category is in \emph{primary Smith normal form} if its based chain complex $(C_*,d)$ is in primary Smith normal form and the number of points in any 0-dimensional moduli space is exactly the corresponding entry in the matrix of the differential $d$.
\end{definition}

Notice that the graph $\Gamma_1$ of a framed flow category in primary Smith normal form is a disjoint union of vertices, and edges of the form 
\begin{tikzpicture} 
\draw
    (10, 1.2) node[circle, inner sep=0.04cm, fill=black, draw] {} -- 
    (10, 1.5) node[left] {$p^k$} -- 
    (10, 1.8) node[circle, inner sep=0.04cm, fill=black, draw] {};
\end{tikzpicture}
.

In a previous paper, we showed the following.

\begin{theorem}[{\cite[Theorem 6.2]{ALPODS2}}]\label{thm:smith} 
	Any framed flow category $\cC$ is move equivalent to a framed flow category in primary Smith normal form.
\end{theorem}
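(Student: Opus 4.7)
The plan is to implement the classical Euclidean-algorithm reduction of an integer matrix to Smith normal form at the level of flow categories, and then further refine each diagonal entry to its prime-power factors, using handle slides as the analogue of elementary integer row and column operations, and using the extended Whitney trick to reconcile the resulting algebra with the geometry.

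The key translation is that a $(\pm)$-handle slide of $x$ over $y$ (with $\gr{x}=\gr{y}$) replaces the generator $x$ in the based chain complex $(C_\ast,d)$ by $x\pm y$: as described in Section~\ref{sec:handleslide}, the slide adds (or subtracts) the $y$-column to the $x$-column of the matrix of $d_{\gr{x}-1}$ and performs the dual operation on $d_{\gr{x}}$. Since slides can be iterated, the operation ``add $k$ times one column to another'' is realized for every $k\in\Z$, so, one grading at a time, a finite sequence of handle slides brings each differential $d_r$ into the required block form $(0\;\; D)^t$ with $D$ diagonal.

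At this stage, a diagonal entry $D_{ii}=n$ only records the \emph{signed} count of points in the associated $0$-dimensional moduli space $\M(x,y)$, so $\M(x,y)$ may still contain spurious cancelling pairs of oppositely-framed points. I would eliminate these by the extended Whitney trick of Section~\ref{sec:extendedwhitney} with $W$ a framed null-cobordism of such a cancelling pair, leaving exactly $|n|$ identically framed points. Whenever $n=\pm 1$ I then apply handle cancellation (Definition~\ref{cancelledcat}) to delete the pair $(x,y)$. To refine to \emph{primary} Smith normal form, I would unpackage each composite entry $D_{ii}=ab$ with $\gcd(a,b)=1$ by inserting a trivial cancelling pair $(x',y')$ with $\M(x',y')$ a single point and no other moduli spaces (this is the reverse of a handle cancellation, and hence also a valid move), and then performing a short sequence of handle slides that replaces $D_{ii}$ by adjacent diagonal entries $a$ and $b$; induction on prime factorizations then produces prime-power entries everywhere.

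The principal obstacle is not the algebra but the geometric bookkeeping on higher-dimensional moduli spaces. Each handle slide of $x$ over $y$ attaches cylinders $\M(y,b)\times[0,1]\times\M(a,x)$ to every $\M(a,b)$ involving these objects, and after the many slides required by the algorithm these accumulate into a potentially complicated framed cobordism along the boundary of each higher $\M(a,b)$. Verifying that this accumulated cobordism can always be absorbed, up to framed cobordism relative to the boundary, is exactly what the extended Whitney trick is designed to accomplish; showing that it does in fact suffice at every stage of the reduction is where the bulk of the technical work lies, and is the step on which I would budget the most time.
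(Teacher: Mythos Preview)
The paper does not give a proof of this theorem here; it is simply quoted from \cite[Theorem 6.2]{ALPODS2} with the sentence ``In a previous paper, we showed the following.'' So there is no in-paper argument to compare against. That said, your outline is essentially the argument one would expect in \cite{ALPODS2}: handle slides realize elementary integer row/column operations on the based chain complex, the extended Whitney trick strips cancelling pairs from $0$-dimensional moduli spaces so that point-counts match matrix entries, handle cancellation deletes $\pm1$ entries, and reverse handle cancellations introduce the extra generators needed to split composite diagonal entries into prime powers.

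Your final paragraph, however, misidentifies where the work lies. A handle slide is already a globally defined move on the entire framed flow category: the cylinders $\M(y,b)\times[0,1]\times\M(a,x)$ are \emph{part of the definition} of the new moduli spaces $\M_{\cC_S}(a,b)$, with framings specified once and for all in \cite[\S 3]{ALPODS2}. There is nothing to ``absorb'' afterwards and no separate verification that the accumulated cobordisms are null; each slide hands you back a bona fide framed flow category, and you simply iterate. The only genuine care required is the order in which you process the gradings so that fixing $d_r$ does not undo the work on $d_{r-1}$ (use $d^2=0$ to see that the image of $d_r$ already lands in $\ker d_{r-1}$), and that is purely algebraic. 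So your plan is correct, but the ``principal obstacle'' you budget for is a non-issue.
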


\begin{definition}
Let $k\geq 2$ and $n$ be integers. Then let $\Mo(\Z/k\Z,n)$ be the framed flow category with two objects $a,b$ satisfying $|a|=|b|+1 = n+1$ such that $\M(a,b)$ consists of $k$ positively framed points. We call $\Mo(\Z/k\Z,n)$ the \emph{Moore flow category for $(\Z/k\Z,n)$}, and we also write $\Mo(\Z/k\Z)$ if we do not want to specify $n$.

Similarly, define $\Sp^n$ to be the framed flow category with one object $a$ satisfying $|a|=n$, called the \emph{sphere flow category}.
\end{definition}

Given two framed flow categories $\cC_1$ and $\cC_2$ we can form a new framed flow category, their \emph{disjoint union}, denoted $\cC=\cC_1\sqcup \cC_2$. This flow category contains both $\cC_1$ and $\cC_2$ as full subcategories, and any moduli spaces $\M(a,b)$ with $a\in \Ob(\cC_i)$ and $b\in \Ob(\cC_j)$, $\{i,j\}=\{1,2\}$, are empty.

It follows immediately from the Cohen--Jones--Segal construction that 
\[
\mathcal{X}(\cC)=\mathcal{X}(\cC_1)\vee \mathcal{X}(\cC_2).
 \]

A framed flow category is called \emph{indecomposable} if it is not move equivalent to a disjoint union of two non-trivial framed flow categories. We call the sphere flow categories $\Sp^n$ and the Moore flow categories $\Mo(\Z/p^k\Z,n)$, where $p$ is a prime number and $k\geq 1$, the \emph{elementary Moore flow categories}.  

\begin{lemma}\label{lem:trick1}
Let $\cC$ be a framed flow category in primary Smith normal form, and let $b,c,d\in \Ob(\cC)$ be objects such that $|c|=|d|+1$ and $|b|=|c|+m$ for some $m>0$. Assume that $\M_\cC(c,d)$ consists of $p>0$ positively framed points, and $\tau = \M_\cC(b,d)$ is a closed framed manifold such that the order of $\tau\in \Omega_m^{fr}$ is $q>0$ with $\gcd(p,q)=1$. Then $\cC$ is move equivalent to a framed flow category $\cC'$ which has the same objects as $\cC$, and such that
\[
 \M_{\cC'}(b,d) = \emptyset
\]
and
\[
 \M_{\cC'}(x,y) = \M_{\cC}(x,y)
\]
for all objects $x,y\in \Ob(\cC)$ with $|x|-|y| \leq m$. Furthermore, if $|x|-|y| = m+1$, we have
\[
 \M_{\cC'}(x,y) = \M_{\cC}(x,y)
\]
unless $(x,y)=(b,d)$ or $y=c$ and $\M(x,b)\not=\emptyset$.

There is an obvious dual statement where $|b|=|d|-m$ and $\tau = \M_\cC(c,b)$.

\begin{center}
\begin{tikzpicture}

\draw
    (0, 0.6) node[circle, inner sep=0.04cm, fill=black, draw] {} -- 
    (0, 0.3) node[left] {p} -- 
    (0, 0) node[circle, inner sep=0.04cm, fill=black, draw] {} -- 
    (0.8,1.8) node[circle, inner sep=0.04cm, fill=black, draw] {} ;
    
\draw [->] [decorate, decoration={snake}] (1.2,0.9) -- (2.4,0.9);
    
\draw
    (3, 0.6) node[circle, inner sep=0.04cm, fill=black, draw] {} -- 
    (3, 0.3) node[left] {p} -- 
    (3, 0) node[circle, inner sep=0.04cm, fill=black, draw] {};
\draw
    (3.8,1.8) node[circle, inner sep=0.04cm, fill=black, draw] {} ;

\node at (0.7,0.9) {$\tau$};

\node at (5.2,0.9) {and};

\draw
    (7, 1.2) node[circle, inner sep=0.04cm, fill=black, draw] {} -- 
    (7, 1.5) node[left] {p} -- 
    (7, 1.8) node[circle, inner sep=0.04cm, fill=black, draw] {} -- 
    (7.8,0) node[circle, inner sep=0.04cm, fill=black, draw] {} ;
    
\draw [->] [decorate, decoration={snake}] (8.2,0.9) -- (9.4,0.9);
    
\draw
    (10, 1.2) node[circle, inner sep=0.04cm, fill=black, draw] {} -- 
    (10, 1.5) node[left] {p} -- 
    (10, 1.8) node[circle, inner sep=0.04cm, fill=black, draw] {};
\draw
    (10.8,0) node[circle, inner sep=0.04cm, fill=black, draw] {} ;

\node at (7.7,0.9) {$\tau$};

\end{tikzpicture}
\end{center}
\end{lemma}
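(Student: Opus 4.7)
The plan is to leverage the coprimality condition $\gcd(p,q)=1$ via Bezout, combined with a framed null-cobordism of $q$ disjoint copies of $\tau$. Pick integers $s, t \in \Z$ with $sp + tq = 1$, and fix a framed manifold $W$ with $\partial W = q\tau$, which exists since $\tau$ has order $q$ in $\Omega_m^{fr}$.

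I would introduce an auxiliary cancelling pair of objects $(e, f)$ at carefully chosen gradings (with $|e|=|b|+1$ and $|f|=|b|$ seeming natural, so that after cancellation the residues land in exactly the allowed moduli spaces). Between introducing and cancelling $(e,f)$, I would perform a sequence of handle slides and extended Whitney tricks, and then finally apply handle cancellation to $(e,f)$. If everything is arranged correctly, the only moduli spaces altered after the full procedure are $\M(b,d)$ (emptied) and $\M(x,c)$ for those $x$ with $|x|=|b|+1$ and $\M(x,b)\neq\emptyset$, matching the conclusion. The intermediate slides are used to set up compositions through $(e,f)$ that, upon cancellation via the formula in Definition \ref{cancelledcat}, leave exactly the prescribed residues in $\M(x,c)$. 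The extended Whitney tricks are used to manipulate framed cobordism classes: one set of applications introduces copies of a framed manifold derived from $W$ into auxiliary moduli spaces at dimension $m+1$, and a final application replaces $\M(b,d)$ by the empty manifold once it has been brought, via composition contributions coming from the $p$ points of $\M(c,d)$, into the framed cobordism class $tq\tau$.

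The arithmetic driving the construction is the identity $1 - sp = tq$: the slide-and-cancel procedure is engineered so that its effect on $\M(b,d)$ is (up to framed cobordism) the addition of $-sp$ copies of $\tau$, coming from composition with $\M(c,d)$ via the newly-installed auxiliary contributions. The resulting class $(1-sp)\tau = tq\tau$ vanishes in $\Omega_m^{fr}$, so a single extended Whitney trick along $t$ copies of $W$ empties $\M(b,d)$.

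The principal obstacle is the framing and boundary bookkeeping: at each intermediate step the modified moduli spaces must glue along their common boundaries to form a valid framed flow category, and no residual modification may escape to a moduli space $\M(x,y)$ with $|x|-|y|\leq m$, in particular not to $\M(b,c)$ which lies in exactly this dimension range. I would verify this by induction on the objects $x$ with $\M(x,b)\neq\emptyset$, treating each such $x$ independently so that the verification reduces to a collection of local model cases, and assembling the local models using the graphical calculus under development in Section \ref{sec:calculus}. The dual statement, for $|b|=|d|-m$ with $\tau=\M_\cC(c,b)$, follows by reversing all gradings.
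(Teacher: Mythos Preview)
Your plan is essentially the paper's proof: introduce a cancelling pair at gradings $|b|$ and $|b|+1$, use Bezout to place $r$ copies of $\tau$ in an auxiliary moduli space into $c$ so that composition with the $p$ points of $\M(c,d)$ yields $rp\tau$ (with $rp-1$ copies bounded by a null-cobordism), then do a handle slide and extended Whitney trick before cancelling the pair. One preliminary observation you should record explicitly is that $\M(b,c)=\emptyset$, since otherwise $\M(b,d)$ would not be closed; this is what guarantees the slide does not disturb any moduli space of dimension $\leq m$.
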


\begin{proof}
First observe that $\M(b,c)=\emptyset$ for otherwise $\M(b,d)$ would not be a closed manifold.

By Bezout's Lemma there exist integers $r,s$ so that $rp+sq=1$. Hence $rp-1$ copies of $\tau$ are frame nullcobordant. We can now define a framed flow category $\cC''$ containing $\cC$ as a full subcategory as follows. Starting with $\cC$, we add two objects $a',b'$ with $|b'|=|b|=|a'|-1$, setting $\M_{\cC''}(a',b')$ a negatively framed point, $\M_{\cC''}(a',c)$ consists of $r$ copies of $\tau$.

For any object $x\in \Ob(\cC)$ with $|x|\leq |d|$ we set 
\[
 \M_{\cC''}(b',x) = \M(c,x) \times r\tau
\]
so that $\partial \M_{\cC''}(a',x)$ consists of two copies of $\M(c,x) \times r\tau$ with opposite framings. We can therefore obtain a framed moduli space $\M_{\cC''}(a',x)$.

Notice that $\M_{\cC''}(b',d) = rp \cdot \tau$ which is framed cobordant to $\tau$. After performing the extended Whitney trick we get $\M_{\cC''}(b',d) = \tau$. As handle cancellation of $a',b'$ leads to $\cC$, these categories are move equivalent. We now perform a handle slide, the extended Whitney trick and cancellation of $a'$ and $b'$ on $\cC''$ as in the moves below to get the required flow category $\cC'$.

\begin{center}
\begin{tikzpicture}

\draw
    (0, 0.6) node[circle, inner sep=0.04cm, fill=black, draw] {} -- 
    (0, 0.3) node[left] {$p$} -- 
    (0, 0) node[circle, inner sep=0.04cm, fill=black, draw] {} -- 
    (0.8,1.8) node[circle, inner sep=0.04cm, fill=black, draw] {} ;
    
    \node at (0.7,0.9) {$\tau$};
    
\draw [->] [decorate, decoration={snake}] (1.2,0.9) -- (2.4,0.9);
    
\draw
    (4.4, 0.6) node[circle, inner sep=0.04cm, fill=black, draw] {} -- 
    (4.4, 0) node[circle, inner sep=0.04cm, fill=black, draw] {} -- 
    (5.2,1.8) node[circle, inner sep=0.04cm, fill=black, draw] {} ;
    
\draw
    (4.4, 0.6) node[circle, inner sep=0.04cm, fill=black, draw] {} -- 
    (2.6,2.4) node[circle, inner sep=0.04cm, fill=black, draw] {} --
    (2.6, 2.1) node[left] {-1} -- 
    (2.6,1.8) node[circle, inner sep=0.04cm, fill=black, draw] {} --
    (4.4, 0) node[circle, inner sep=0.04cm, fill=black, draw] {} ;

\node at (3.1,0.9) {$\tau$};
\node at (3.8,1.6) {$r\tau$};
\node at (4.3,0.3) {$p$};

\draw [->] [decorate, decoration={snake}] (5.5,0.9) -- (6.7,0.9);

\draw
    (8.6, 0.6) node[circle, inner sep=0.04cm, fill=black, draw] {} -- 
    (8.6, 0) node[circle, inner sep=0.04cm, fill=black, draw] {} ;
\draw
    (9.4,1.8) node[circle, inner sep=0.04cm, fill=black, draw] {} ;

\draw
    (8.6, 0.6) node[circle, inner sep=0.04cm, fill=black, draw] {} -- 
    (6.8,2.4) node[circle, inner sep=0.04cm, fill=black, draw] {} --
    (6.8, 2.1) node[left] {-1} -- 
    (6.8,1.8) node[circle, inner sep=0.04cm, fill=black, draw] {} --
    (8.6, 0) node[circle, inner sep=0.04cm, fill=black, draw] {} ;

\node at (7.4,0.9) {$\tau$};
\node at (8,1.6) {$r\tau$};
\node at (8.5,0.3) {$p$};

\draw [->] [decorate, decoration={snake}] (9.8,0.9) -- (11,0.9);

\draw
    (11.3, 0.6) node[circle, inner sep=0.04cm, fill=black, draw] {} -- 
    (11.3, 0.3) node[left] {$p$} -- 
    (11.3, 0) node[circle, inner sep=0.04cm, fill=black, draw] {} ;
    \draw
    (12.1,1.8) node[circle, inner sep=0.04cm, fill=black, draw] {} ;

\end{tikzpicture}
\end{center}
The dual move equivalence follows similarly and is left to the reader.
\end{proof}

Some of the higher dimensional moduli spaces can indeed change. To see this, consider the following example.

\begin{example}\label{exam:get_epsilon}
Let $\cC$ be the framed flow category which has objects $a,b,c,d$ with $|a|=|b|+1=|c|+2=|d|+3 = 3$. Let $\M(b,c)$ consist of a prime number $p>2$ of positively framed points (in fact, any odd number will work), and let $\M(a,c)$ and $\M(b,d)$ be $\eta$ with all other moduli spaces empty. As in the proof above we now get
\begin{center}
\begin{tikzpicture}
\draw (0,0) node[circle, inner sep=0.04cm, fill=black, draw] {} --
      (1,1.2) node[circle, inner sep=0.04cm, fill=black, draw] {} -- 
      (1,0.6) node[circle, inner sep=0.04cm, fill=black, draw] {} --
      (2,1.8) node[circle, inner sep=0.04cm, fill=black, draw] {};

\draw (1.13,0.9) node{$p$};
\draw (0.4,0.7) node{$\eta$};
\draw (1.4,1.3) node{$\eta$};

\draw (2.4,1.2) node{$\sim$};

\draw (3.2,0) node[circle, inner sep=0.04cm, fill=black, draw] {} --
      (4.2,1.2) node[circle, inner sep=0.04cm, fill=black, draw] {} -- 
      (4.2,0.6) node[circle, inner sep=0.04cm, fill=black, draw] {} --
      (5.2,1.8) node[circle, inner sep=0.04cm, fill=black, draw] {};

\draw (4.2,0.6) node{} --
      (3.2,1.8) node[circle, inner sep=0.04cm, fill=black, draw] {} --
      (3.2,2.4) node[circle, inner sep=0.04cm, fill=black, draw] {} --
      (4.2,1.2) node{};

\draw (3.2,1.8) to[out=-110,in=110] (3.2,0) ;

\draw (4.33,0.9) node{$p$};
\draw (3.6,0.7) node{$\eta$};
\draw (4.6,1.3) node{$\eta$};
\draw (3.5,1.2) node{$\eta$};
\draw (3.8,1.9) node{$\eta$};
\draw (2.8,0.9) node{$\varepsilon$};
\draw (3.33,2.06) node{$1$};

\draw (5.6,1.2) node{$\sim$};

\draw (6.4,0) node[circle, inner sep=0.04cm, fill=black, draw] {} --
      (7.4,1.2) node[circle, inner sep=0.04cm, fill=black, draw] {} -- 
      (7.4,0.6) node[circle, inner sep=0.04cm, fill=black, draw] {} ;

\draw (7.4,0.6) node{} --
      (6.4,1.8) node[circle, inner sep=0.04cm, fill=black, draw] {} --
      (6.4,2.4) node[circle, inner sep=0.04cm, fill=black, draw] {} --
      (7.4,1.2) node{};

\draw (6.4,1.8) to[out=-110,in=110] (6.4,0) ;
\draw (6.4,0) to[out=0, in=-90] (8.4,1.8) node[circle, inner sep=0.04cm, fill=black, draw] {};

\draw (7.53,0.9) node{$p$};
\draw (6.8,0.7) node{$\eta$};
\draw (7,1.9) node{$\eta$};
\draw (6.7,1.2) node{$\eta$};
\draw (8,0.4) node{$\varepsilon$};
\draw (6,0.9) node{$\varepsilon$};
\draw (6.53,2.06) node{$1$};

\draw (8.8,1.2) node{$\sim$};

\draw (9.4,0.6) node[circle, inner sep=0.04cm, fill=black, draw] {} --
      (9.4,1.2) node[circle, inner sep=0.04cm, fill=black, draw] {} --
      (10.4,0) node[circle, inner sep=0.04cm, fill=black, draw] {} --
      (10.4,1.8) node[circle, inner sep=0.04cm, fill=black, draw] {};

\draw (9.27,0.9) node{$p$};
\draw (10,0.7) node{$\eta$};
\draw (10.6,0.9) node{$\varepsilon$};
\end{tikzpicture}

\begin{tikzpicture}
\draw (0,1.2) node{$\sim$};

\draw (0.4,0) node[circle, inner sep=0.04cm, fill=black, draw] {} --
      (0.4,0.6) node[circle, inner sep=0.04cm, fill=black, draw] {} --
      (1.4,1.8) node[circle, inner sep=0.04cm, fill=black, draw] {} --
      (1.4,1.2) node[circle, inner sep=0.04cm, fill=black, draw] {} --
      (0.4,0);

\draw (1.4,1.8) -- 
      (2.4,0.6) node[circle, inner sep=0.04cm, fill=black, draw] {} --
      (2.4,2.4) node[circle, inner sep=0.04cm, fill=black, draw] {};

\draw (0.5,0.35) node{$1$};
\draw (1.29,1.4) node{$p$};
\draw (0.7,1.2) node{$\eta$};
\draw (1.1,0.5) node{$\eta$};
\draw (2,1.3) node{$\eta$};
\draw (2.6,1.5) node{$\varepsilon$};

\draw(3,1.2) node{$\sim$};

\draw (3.4,0) node[circle, inner sep=0.04cm, fill=black, draw] {} --
      (3.4,0.6) node[circle, inner sep=0.04cm, fill=black, draw] {} --
      (4.4,1.8) node[circle, inner sep=0.04cm, fill=black, draw] {} --
      (4.4,1.2) node[circle, inner sep=0.04cm, fill=black, draw] {} --
      (3.4,0);

\draw (5.4,0.6) node[circle, inner sep=0.04cm, fill=black, draw] {} --
      (5.4,2.4) node[circle, inner sep=0.04cm, fill=black, draw] {};

\draw (3.5,0.35) node{$1$};
\draw (4.29,1.4) node{$p$};
\draw (3.7,1.2) node{$\eta$};
\draw (4.1,0.5) node{$\eta$};
\draw (5.6,1.5) node{$\varepsilon$};

\draw (6,1.2) node{$\sim$};

\draw (6.6,1.8) node[circle, inner sep=0.04cm, fill=black, draw] {} --
      (6.6,1.2) node[circle, inner sep=0.04cm, fill=black, draw] {};

\draw (7.6,2.4) node[circle, inner sep=0.04cm, fill=black, draw] {} --
      (7.6,0.6) node[circle, inner sep=0.04cm, fill=black, draw] {};

\draw (6.8,1.5) node{$p$};
\draw (7.8,1.5) node{$\varepsilon$};

\end{tikzpicture}
\end{center}
So we see that $\cC$ is move equivalent to the disjoint union of the Moore category $\Mo(\Z/p\Z)$ and what we will later call the Baues--Hennes flow category $\B(\varepsilon)$. 
\end{example}

\begin{corollary}\label{cor:odd_Moore}
Let $\cC$ be a flow category in primary Smith normal form of homological width $r \leq 2p-3$ for some prime number $p$. Assume that $\widetilde{H}_\ast(\cC;\Z)$ contains a $\Z/q^k\Z$-summand, where $q$ is a prime number $q\geq p$ and $k\geq 1$. Then $\cC$ is move equivalent to $\cC'\sqcup \Mo(\Z/q^k\Z)$ for some framed flow category $\cC'$.
\end{corollary}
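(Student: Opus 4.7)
The plan is to apply Theorem \ref{thm:smith} to put $\cC$ in primary Smith normal form and then iteratively separate the pair of objects realizing the $\Z/q^k\Z$-summand. After Theorem \ref{thm:smith}, this summand is realized by a pair $b,c\in \Ob(\cC)$ with $|b|=|c|+1$ such that $\M(b,c)$ consists of exactly $q^k$ positively framed points, and every other $0$-dimensional moduli space involving $b$ or $c$ is empty. The goal is to eliminate every higher-dimensional moduli space connecting $\{b,c\}$ to $\Ob(\cC)\sm\{b,c\}$, so that $\{b,c\}$ becomes a disconnected $\Mo(\Z/q^k\Z,|c|)$ summand.

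The key numerical input is the following. We may assume $p\geq 3$, since $p=2$ forces $r\leq 1$, in which case $\cC$ is essentially a chain complex and primary Smith normal form alone yields the splitting. Every moduli space has dimension at most $r-1\leq 2p-4$. Because the first $q$-primary element of $\pi_\ast^{st}\cong\Omega_\ast^{fr}$ lies in degree $2q-3\geq 2p-3$, the group $\Omega_n^{fr}$ has no $q$-torsion for $n\leq 2p-4$, so multiplication by $q^k$ is injective on every $\Omega_n^{fr}$ we encounter.

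The main argument proceeds by induction on $M\in\{1,\ldots,r-1\}$: at stage $M$ I simultaneously kill $\M(x,b)$, $\M(x,c)$ for every $x$ with $|x|-|b|=M$ and dually $\M(c,y)$, $\M(b,y)$ for every $y$ with $|c|-|y|=M$. For $M=1$, the $0$-dimensional pieces $\M(x,b)$ and $\M(c,y)$ are empty by primary Smith normal form, so the dimension $1$ moduli $\M(x,c)$ and $\M(b,y)$ are closed; I apply Lemma \ref{lem:trick1} with $(B,C,D)=(x,b,c)$ and its dual, using $\M(b,c)$ as the required $p$-point channel with $p=q^k$, to kill them. For $M\geq 2$, the inductive vanishing of all lower-level moduli on the boundary makes $\M(x,b)$ a closed framed $(M-1)$-manifold, and the flow category boundary axiom reduces $\partial\M(x,c)$ to $q^k\cdot \M(x,b)$, giving $q^k[\M(x,b)]=0$ in $\Omega_{M-1}^{fr}$. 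The dimensional input then forces $[\M(x,b)]=0$, producing a framed null-cobordism $W$. The extended Whitney trick of Section \ref{sec:extendedwhitney} along $W$ replaces $\M(x,b)$ by $\emptyset$ and glues $q^k$ copies of $W$ into $\M(x,c)$, rendering the latter closed; Lemma \ref{lem:trick1} then kills $\M(x,c)$. A symmetric argument using the dual of Lemma \ref{lem:trick1} handles $\M(c,y)$ and $\M(b,y)$.

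The principal obstacle is controlling the side effects of Lemma \ref{lem:trick1} and the extended Whitney trick, each of which can perturb moduli spaces at the next higher level. Inducting in order of increasing $M$ confines these perturbations to later stages, so every stage operates on already-simplified input and the hypotheses of Lemma \ref{lem:trick1} are preserved throughout. When the induction terminates at $M=r-1$, no moduli space connects $\{b,c\}$ to $\Ob(\cC)\sm\{b,c\}$, and we obtain the decomposition $\cC\sim\cC'\sqcup\Mo(\Z/q^k\Z,|c|)$, where $\cC'$ is the full subcategory on $\Ob(\cC)\sm\{b,c\}$.
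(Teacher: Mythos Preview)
Your proof is correct and follows essentially the same approach as the paper's. Both arguments put $\cC$ in primary Smith normal form, invoke Serre's theorem to ensure the relevant $\Omega^{fr}_n$ have no $q$-torsion, and then iterate the two-step process ``kill the closed moduli space into the upper Moore object via an extended Whitney trick (using that $q^k[\cdot]=0$ forces null-cobordance), then kill the remaining closed moduli space into the lower Moore object via Lemma~\ref{lem:trick1}.'' The only organizational difference is that the paper treats the above-side completely before running the dual argument on the below-side, whereas you induct on the grading difference $M$ and handle both sides at each stage; your version is in fact slightly more explicit about tracking all four families $\M(x,b)$, $\M(x,c)$, $\M(c,y)$, $\M(b,y)$ and confining side effects to later stages.
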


\begin{proof}
For $p=2$ this follows from Theorem \ref{thm:smith}, so assume $p\geq 3$. By a theorem of Serre \cite[IV, Prop.11]{Serre} we get that $\pi^{st}_m$ does not contain $q$-torsion for $m < 2p-3$. Hence whenever $\M(u,v)$ is a closed framed manifold of dimension bigger than $0$ in $\cC$, its order $s$ will satisfy $\gcd(s,q)=1$.

We can assume that $\cC$ is in primary Smith normal form, and so we get a full subcategory $\Mo(\Z/q^k\Z)$ of $\cC$ by the homology assumption. Denote by $a$ and $b$ the two objects in this subcategory, with $|a|=|b|+1$.

Now let $u$ be an object with minimal $|u|$ such that $\M_{\cC}(u,b) \not= \emptyset$. If $\M_{\cC}(u,a)\not=\emptyset$, then $\partial \M_{\cC}(u,b)$ consists of $q^k$ copies of $\M_{\cC}(u,a)$ by the minimality of $u$. But then $\M_{\cC}(u,a)$ is a closed framed manifold which is framed null-cobordant, as $\pi^{st}_{|u|-|a|-1}$ does not contain $q$-torsion. Using an extended Whitney trick, we can assume that $\M_{\cC}(u,a)=\emptyset$.

Then $\M_{\cC}(u,b)$ is a framed closed manifold, and by Lemma \ref{lem:trick1} we get a move equivalent framed flow category $\cC'$, with the same object set, such that $\M_{\cC'}(u,b) = \emptyset$, and such that for all objects $v\not=a$ with $|v|\leq |u|$ we have $\M_{\cC'}(v,b) = \M_{\cC}(v,b)=\emptyset$. We can repeat this argument finitely many times to get a move equivalent framed flow category $\cC''$ with $\M_{\cC''}(u,b) = \emptyset$ for all objects $u\not=a$ in $\cC''$.

The dual argument gives us a move equivalent framed flow category $\cC'''$ with $\M_{\cC'''}(a,v)=\emptyset$ for all objects $v\not=b$.
\end{proof}

\begin{lemma}\label{lem:trick2}
Let $\cC$ be a framed flow category in primary Smith normal form and $b,c\in \Ob(\cC)$ with $|b|=|c|+1$. Then there is a framed flow category $\cC'$ move equivalent to $\cC$ such that the following holds:
\begin{itemize}
	\item For all objects $x,y$ such that $\M_{\cC}(x,b)$ and $\M_{\cC}(c,y)$ are empty we have
	\[
		\M_{\cC'}(x,y) = \M_{\cC}(x,y).
	\]
	\item If $d$ is an object such that $\M_\cC(c,d)$ is a closed framed manifold, then
	\[
		\M_{\cC'}(b,d) = \M_\cC(b,d) \sqcup \M_\cC(c,d) \times \eta.
	\]
	\item If $a$ is an object such that $\M_\cC(a,b)$ is a closed framed manifold, then
	\[
		\M_{\cC'}(a,c) = \M_\cC(a,c) \sqcup \eta \times \M_\cC(a,b).
	\]
\end{itemize}

\end{lemma}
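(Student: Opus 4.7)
The plan follows the template used in the proof of Lemma \ref{lem:trick1}: I introduce an auxiliary cancelling pair of objects, perform a sequence of handle slides and extended Whitney tricks to manipulate the moduli spaces involving the auxiliary pair, and then cancel the pair to obtain $\cC'$. The key ingredient is that while $\eta\in\Omega^{fr}_1$ is not itself null-cobordant, $\eta\sqcup\eta$ is; this allows an extended Whitney trick on a one-dimensional moduli space to introduce copies of $\eta$ in pairs via a two-dimensional framed null-cobordism $W$ with $\partial W=2\eta$.

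Introduce new objects $p,q$ with $|p|=|b|$ and $|q|=|c|=|p|-1$, joined by a single positively framed point $\M_{\cC_+}(p,q)$, and with all other moduli spaces involving $p$ or $q$ initially empty. The resulting augmented flow category $\cC_+$ is move equivalent to $\cC$ via handle cancellation of the trivial pair $(p,q)$. Within $\cC_+$ I perform a handle slide of $q$ over $c$ (both at grading $|c|$), followed by a handle slide of $b$ over $p$ (both at grading $|b|$), which populate the side moduli spaces and in particular create $\M_{\cC_+}(b,q)$ as a single positive point. Subsequent extended Whitney tricks, built using the null-cobordism of $\eta\sqcup\eta$, introduce and distribute $\eta$-factors so that
\[
\M_{\cC_+}(p,c)=\text{single positive point},\qquad \M_{\cC_+}(p,d)=\M_\cC(c,d)\times\eta
\]
for each $d$ with $\M_\cC(c,d)$ closed framed, and $\M_{\cC_+}(a,q)=\eta\times\M_\cC(a,b)$ for each $a$ with $\M_\cC(a,b)$ closed framed. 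At each intermediate stage I verify that the framed flow category axioms continue to hold, extending framings by the inductive procedure of Remark \ref{rem:extend_framing}.

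Finally I handle-cancel $(p,q)$ in the modified $\cC_+$. The cancellation gluing
\[
\M_{\cC'}(x,y)=\M_{\cC_+}(x,y)\cup_f\big(\M_{\cC_+}(p,y)\times\M_{\cC_+}(x,q)\big)
\]
applied to the carefully-constructed $\cC_+$ yields exactly the desired moduli spaces: the product term $\M_{\cC_+}(p,d)\times\M_{\cC_+}(b,q)$ gives the summand $\M_\cC(c,d)\times\eta$ in $\M_{\cC'}(b,d)$, and $\M_{\cC_+}(p,c)\times\M_{\cC_+}(a,q)$ gives the summand $\eta\times\M_\cC(a,b)$ in $\M_{\cC'}(a,c)$. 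For objects $x,y$ with $\M_\cC(x,b)$ and $\M_\cC(c,y)$ both empty, at least one factor of the product term vanishes, so the first condition of the lemma is preserved.

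The main obstacle is framing-coherence: I must verify that the extended Whitney tricks introduce the $\eta$-factors on the correct sides of the cancelling pair, so that the final handle cancellation produces exactly the stated formulas without introducing spurious higher-dimensional contributions that would violate condition (1). This bookkeeping is handled by exploiting the low-dimensional structure of $\Omega^{fr}_*$ described in Section \ref{sec:framed_manifolds}, and by tracking the propagation of each Whitney cobordism $W$ through the subsequent cancellation gluing using the formulas in Section \ref{sec:extendedwhitney}.
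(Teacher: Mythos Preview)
Your proposed intermediate state is inconsistent with the flow category axioms. You want, in the augmented category $\cC_+$ before cancelling $(p,q)$, to have simultaneously $\M_{\cC_+}(p,q)=\{\text{one point}\}$, $\M_{\cC_+}(q,d)\neq\emptyset$ (which the handle slide of $q$ over $c$ forces whenever $\M_\cC(c,d)\neq\emptyset$), and $\M_{\cC_+}(p,d)=\M_\cC(c,d)\times\eta$. But the boundary axiom gives $\partial\M_{\cC_+}(p,d)\supset \M_{\cC_+}(q,d)\times\M_{\cC_+}(p,q)\neq\emptyset$, so $\M_{\cC_+}(p,d)$ cannot be the closed manifold $\M_\cC(c,d)\times\eta$. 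The same obstruction applies to $\M_{\cC_+}(a,q)$. More broadly, the phrase ``subsequent extended Whitney tricks, built using the null-cobordism of $\eta\sqcup\eta$, introduce and distribute $\eta$-factors'' is where all the content would have to be, and it is left entirely unspecified; a null-cobordism of $2\eta$ produces $\eta$'s in pairs, and you offer no mechanism for isolating a single $\eta$-factor in the right place.

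The paper's argument is much more direct and uses no auxiliary objects. One performs an extended Whitney trick on $\M_\cC(b,c)$ itself, using a framed interval $[0,1]$ as a cobordism from $\emptyset$ to a pair of oppositely framed points (together with the identity cobordism on the original $\M_\cC(b,c)$). This appends $\M_\cC(c,d)\times[0,1]$ to $\M(b,d)$ and $[0,1]\times\M_\cC(a,b)$ to $\M(a,c)$. A second Whitney trick then removes the two new points, closing each interval factor into a circle. The resulting circle may carry either framing, but by first isotoping the framing of one of the two new points through a single full rotation before the second Whitney trick, one toggles the framing of every such circle simultaneously; hence one can arrange that all of them are the nontrivially framed circle $\eta$. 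This ``add two points, twist, remove two points'' manoeuvre is the missing idea.
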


\begin{proof}
	Perform the extended Whitney trick in $\M_\cC(b,c)$ using the cobordism from $\M_\cC(b,c)$ to itself that consists of the disjoint union of the product  cobordism $\M_\cC(b,c)\times [0,1]$ and a circle (here, the circle is considered as a cobordism from the empty set to the empty set). The product cobordism is given the framing extended trivially from $\M_\cC(b,c)$ and the circle is given the non-trivial framing. This has the desired effect.
\end{proof}

\begin{example}\label{exam:remove_epsilon}
	Let $\cC$ be the framed flow category consisting of four objects $a,b,c,d$ with $|a|=|b|+1=|c|+2=|d|+3$. Assume that $\M(b,c)$ consists of $p>1$ positively framed points, $\M(a,d)=\varepsilon$, $\M(b,d)=\eta$ and all other moduli spaces empty. Using Lemma \ref{lem:trick2} on $a,b$ adds an extra $\varepsilon$ to $\M(a,d)$, and $p$ copies of $\eta$ in $\M(a,c)$. We can then use extended Whitney tricks to get $\M(a,d)$ empty and have one $\eta$ in $\M(a,c)$ (for $p$ odd) or $\M(a,c)$ empty (for $p$ even).
	\begin{center}
		\begin{tikzpicture}
			\draw (0.4,0.6) node[circle, inner sep=0.04cm, fill=black, draw] {} --
			(0.4,1.2) node[circle, inner sep=0.04cm, fill=black, draw] {} --
			(1.4,0) node[circle, inner sep=0.04cm, fill=black, draw] {} --
			(1.4,1.8) node[circle, inner sep=0.04cm, fill=black, draw] {};
			
			\draw (0.27,0.9) node{$p$};
			\draw (1,0.7) node{$\eta$};
			\draw (1.6,0.9) node{$\varepsilon$};
			
			\draw (2,0.9) node{$\sim$};
			
			\draw (2.4,0) node[circle, inner sep=0.04cm, fill=black, draw] {} --
			(3.4,1.2) node[circle, inner sep=0.04cm, fill=black, draw] {} -- 
			(3.4,0.6) node[circle, inner sep=0.04cm, fill=black, draw] {} --
			(4.4,1.8) node[circle, inner sep=0.04cm, fill=black, draw] {};
			
			\draw (3.53,0.9) node{$p$};
			\draw (2.8,0.7) node{$\eta$};
			\draw (3.8,1.3) node{$\eta$};
			
			\draw (4.6,0.1) node {$p$ odd,};
			
			\draw (5.5,0.9) node{$\sim$};
			
			\draw (6,0.6) node[circle, inner sep=0.04cm, fill=black, draw] {} --
			(6,1.2) node[circle, inner sep=0.04cm, fill=black, draw] {} --
			(7,0) node[circle, inner sep=0.04cm, fill=black, draw] {};
			
			\draw (7,1.8) node[circle, inner sep=0.04cm, fill=black, draw] {};
			
			\draw (5.87,0.9) node{$p$};
			\draw (6.7,0.7) node{$\eta$};
			
			\draw (8,0.1) node{$p$ even.};
		\end{tikzpicture}
	\end{center}
	Note that for $p$ odd, $\cC$ is the middle flow category in Example \ref{exam:get_epsilon}.
\end{example}

\section{Musical scores}
\label{sec:musical_scores}

\begin{definition}A \emph{stave} is a diagram consisting of a section of the $(x,y)$-plane in which are drawn 4 line segments, which we call \emph{levels}, as follows:
\begin{center}
\begin{tikzpicture}

\score{0.6cm}{10cm}

\node at (-0.8,0.05) {$y=0$};
\node at (-0.8,0.65) {$y=1$};
\node at (-0.8,1.25) {$y=2$};
\node at (-0.8,1.85) {$y=3$};

\end{tikzpicture}
\end{center}
(In general the $y$-values of the levels are suppressed from the notation as understood.) A \emph{score diagram} is a graph drawn on a stave, where vertices are only permitted to be drawn on the line segments and edges are drawn as straight lines.
\end{definition}

We want to draw our flow categories as score diagrams from this point onwards, essentially by using the graph $\Gamma_2$ of the category. Certainly this is not a sensible idea for all flow categories due to the potentially enormous loss of information, but for flow categories in primary Smith normal form of width $3$ this is rather useful.

A framed flow category in primary Smith normal form has the property that all $1$-dimensional moduli spaces are closed, and, after using extended Whitney tricks, we can assume they only contain at most one non-trivially framed circle $\eta$. A flow category with this property will be called \emph{reduced}. In a reduced flow category we will drop the labelling $\eta$ from from the corresponding edges.

\begin{definition}
Let $\cC$ be a reduced framed flow category of width 3. The \emph{score of the category} $\Sigma=\Sigma(\cC)$ is defined to be the score diagram obtained by taking the graph $\Gamma_2(\cC)$ and drawing the vertices on a stave so that a vertex $v_a$ with $|a|=r+n$ is drawn on level $y=r$, and so that edges are drawn as straight lines between vertices so as to form a score diagram.
\end{definition}

The main purpose of the score is to tell us which handle slides should be used to move the flow category into an even simpler form. For this reason we will be mainly concerned with local pictures of the score which only contain those objects involved in handle slides, and with indications only for their neighboring objects. For example, the score
\begin{center}
\begin{tikzpicture}
\score{0.6}{2.8}
\hookbe{0.6}{$q$};
\hookmibe{1.4}{$p$};
\shookl{0.6}{0};
\end{tikzpicture}
\end{center}
describes part of a flow category with at least five objects $b,c_1,c_2,d_1,d_2$ such that $\M(b,d_1)=\eta=\M(b,d_2)$, $\M(c_1,d_1)$ consists of $q$ positively framed points, $\M(c_2,d_2)$ consists of $p$ positively framed points. Furthermore, the short edge emanating from $d_1$ indicates that there may be non-empty moduli spaces $\M(b',d_1)$ for other objects $b'$. If we were to perform a handle slide from $d_1$ over $d_2$, these moduli spaces would lead to a change in the moduli spaces $\M(b',d_2)$. So the short edge mainly serves as a reminder that handle slides can affect other moduli spaces. 
The fact that there is no short edge emanating from $d_2$ is not supposed to indicate that all other moduli spaces $\M(b',d_2)$ are empty, but that they will not play a role in the upcoming moves. 

Handle slides on reduced framed flow categories tend to result in flow categories which are not even in primary Smith normal form. We shall nevertheless draw the graph $\Gamma_2(\cC)$ of such a flow category on a stave and refer to it as the score. This should not cause any confusion.

\begin{lemma}
\label{lem:hook_be}
Let $q\geq p\geq 2$ be powers of $2$ and $\cC$ a reduced framed flow category with score $\Sigma(\cC)$ as given in one of the cases below. Then $\cC$ is move equivalent to a reduced framed flow category $\cC'$ with score $\Sigma(\cC')$ differing from $\Sigma(\cC)$ only as given in the local pictures below.
\begin{center}
\begin{tikzpicture}
\draw (-1,0.6) node{$(1)$};
\draw (0,0.6) node{$\Sigma(\cC)\colon$};
\scoretx{0.6}{2.4}{0.7}
\hookbe{1.1}{$q$};
\hookmibe{1.9}{$p$};
\shookl{1.1}{0};
\draw [->] [decorate, decoration={snake}] (3.3,0.6) -- (4.5,0.6);
\draw (5.3,0.6) node{$\Sigma(\cC')\colon$};
\scoretx{0.6}{2.4}{6}
\hookbe{6.4}{$q$};
\hookmib{8}{$p$};
\shookl{6.4}{0};
\shookl{8}{0};
\end{tikzpicture}
\end{center}
\vspace{0.3cm}
\begin{center}
\begin{tikzpicture}
\draw (-1,0.6) node{$(2)$};
\draw (0,0.6) node{$\Sigma(\cC)\colon$};
\scoretx{0.6}{2.4}{0.7}
\hookbe{1.1}{$q$};
\hookmibe{1.9}{$q$};
\shookl{1.1}{0};
\shookl{1.1}{0.6};
\shookl{3}{0.4};
\draw [->] [decorate, decoration={snake}] (3.3,0.6) -- (4.5,0.6);
\draw (5.3,0.6) node{$\Sigma(\cC')\colon$};
\scoretx{0.6}{2.4}{6}
\hookbe{6.4}{$q$};
\hookmib{8}{$q$};
\shookl{6.4}{0};
\shookl{6.4}{0.6};
\shookl{8.3}{0.4};
\shookl{8}{0};
\shookl{8}{0.6};
\shookl{6.7}{0.4};
\end{tikzpicture}
\end{center}
We also get dual versions as follows.
\begin{center}
\begin{tikzpicture}
\draw (-1,0.6) node{$(3)$};
\draw (0,0.6) node{$\Sigma(\cC)\colon$};
\scoretx{0.6}{2.4}{0.7}
\hookmiem{1.1}{$p$};
\hookem{1.9}{$q$};
\shookl{3}{1};
\draw [->] [decorate, decoration={snake}] (3.3,0.6) -- (4.5,0.6);
\draw (5.3,0.6) node{$\Sigma(\cC')\colon$};
\scoretx{0.6}{2.4}{6}
\hookm{6.4}{$p$};
\hookem{7.2}{$q$};
\shookl{6.7}{1};
\shookl{8.3}{1};
\end{tikzpicture}
\end{center}
\vspace{0.3cm}
\begin{center}
\begin{tikzpicture}
\draw (-1,0.6) node{$(4)$};
\draw (0,0.6) node{$\Sigma(\cC)\colon$};
\scoretx{0.6}{2.4}{0.7}
\hookmiem{1.1}{$p$};
\hookem{1.9}{$p$};
\shookl{3}{1};
\shookl{1.1}{0.6};
\shookl{3}{0.4};
\draw [->] [decorate, decoration={snake}] (3.3,0.6) -- (4.5,0.6);
\draw (5.3,0.6) node{$\Sigma(\cC')\colon$};
\scoretx{0.6}{2.4}{6}
\hookm{6.4}{$p$};
\hookem{7.2}{$p$};
\shookl{6.7}{1};
\shookl{6.4}{0.6};
\shookl{8.3}{0.4};
\shookl{8.3}{1};
\shookl{8}{0.6};
\shookl{6.7}{0.4};
\end{tikzpicture}
\end{center}
If one moduli space is empty, we get the following.
\begin{center}
\begin{tikzpicture}
\draw (-1,0.6) node{$(5)$};
\draw (0,0.6) node{$\Sigma(\cC)\colon$};
\scoretx{0.6}{2.4}{0.7}
\hooke{1.1};
\hookmibe{1.9}{$q$};
\shookl{1.1}{0};
\draw [->] [decorate, decoration={snake}] (3.3,0.6) -- (4.5,0.6);
\draw (5.3,0.6) node{$\Sigma(\cC')\colon$};
\scoretx{0.6}{2.4}{6}
\hooke{6.4};
\hookmib{8}{$q$};
\shookl{6.4}{0};
\shookl{8}{0};
\end{tikzpicture}
\end{center}
\vspace{0.3cm}
\begin{center}
\begin{tikzpicture}
\draw (-1,0.6) node{$(6)$};
\draw (0,0.6) node{$\Sigma(\cC)\colon$};
\scoretx{0.6}{2.4}{0.7}
\hookmiem{1.1}{$p$};
\hooke{1.9};
\shookl{3}{1};
\draw [->] [decorate, decoration={snake}] (3.3,0.6) -- (4.5,0.6);
\draw (5.3,0.6) node{$\Sigma(\cC')\colon$};
\scoretx{0.6}{2.4}{6}
\hookm{6.4}{$p$};
\hooke{7.2};
\shookl{6.7}{1};
\shookl{8.3}{1};
\end{tikzpicture}
\end{center}
\end{lemma}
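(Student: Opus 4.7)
The plan is to prove each of the six cases by exhibiting an explicit finite sequence of handle slides, extended Whitney tricks, and (possibly) handle cancellations that transforms $\cC$ into a framed flow category $\cC'$ whose score agrees with $\Sigma(\cC)$ outside the depicted region and agrees with the claimed $\Sigma(\cC')$ inside. The workhorse throughout is the fact that $2\eta = 0 \in \Omega_1^{fr}$: whenever a handle slide produces a $1$-dimensional moduli space equal to a disjoint union $\eta \sqcup \eta$ (necessarily oppositely framed after a $(\pm)$-slide), an extended Whitney trick as in Section~\ref{sec:extendedwhitney} removes it. The analogous statement for $0$-dimensional moduli spaces of algebraic degree $0$ is used to cancel pairs of oppositely framed points.

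For case~(1), I begin with a $(+)$-handle slide of $d_1$ over $d_2$. By the formulas in Section~\ref{sec:handleslide}, the resulting category has $\M(b,d_2) = \eta \sqcup \eta'$, with $\eta'$ the oppositely framed copy, together with $q$ negatively framed points added to $\M(c_1, d_2)$. The first of these is framed null-cobordant and so is removed by an extended Whitney trick, achieving the desired removal of the $b$--$d_2$ edge. To dispose of the unwanted $q$ negatively framed points in $\M(c_1, d_2)$, I use the hypothesis that $p$ and $q$ are powers of $2$ with $p \mid q$: performing $q/p$ successive $(+)$-handle slides of $c_1$ over $c_2$ adds a total of $(q/p)\cdot p = q$ positively framed points to $\M(c_1,d_2)$, bringing the signed count to zero, and then iterated extended Whitney tricks pair up and remove the resulting oppositely framed points. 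The net alterations to moduli spaces $\M(b', d_2)$ for objects $b'$ outside the local picture are precisely what is recorded by the new short edge on $d_2$ in $\Sigma(\cC')$.

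Case~(2) uses the same template: since $p = q$, a single $(+)$-handle slide of $c_1$ over $c_2$ suffices to cancel the points added to $\M(c_1, d_2)$, and the symmetric short edges above $c_1$ and $d_1$ in the left-hand picture mean that this second slide additionally alters various $\M(a', c_2)$ and $\M(b', d_2)$, which accounts for the richer decoration on the right. Cases~(3) and~(4) are the formal duals of cases~(1) and~(2), obtained by reversing the direction of all handle slides (e.g., sliding $c_2$ over $c_1$ in place of $d_1$ over $d_2$), and are proved by exactly the same sequence of moves read upside-down. Finally, cases~(5) and~(6) are the degenerate situations in which one of the pairs $c_i$--$d_i$ is absent, so that the single $(+)$-slide needed to double the relevant $\eta$-edge introduces no compensating unwanted moduli; a single extended Whitney trick then finishes the job.

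The main obstacle is the bookkeeping. Each handle slide in Section~\ref{sec:handleslide} carries a correction term $\M(y, b) \times [0,1] \times \M(a, x)$ for pairs $(a,b)$ other than those directly involving $x$ or $y$, and these cylinder corrections propagate into all of the higher-dimensional moduli spaces of the category. The short edges in $\Sigma(\cC')$ are precisely the graphical record of these first-order effects on objects outside the local picture; verifying in each of the six cases that the cumulative effect of the depicted sequence of moves is indeed to produce the claimed local score, and nothing else, is the step that must be carried out explicitly by hand.
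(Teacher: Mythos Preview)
Your approach is essentially the same as the paper's: slide $d_1$ over $d_2$ once to double (and hence cancel) the $\eta$-edge into $d_2$, then slide $c_1$ over $c_2$ a total of $q/p$ times to remove the resulting $-q$ edge from $c_1$ to $d_2$, with extended Whitney tricks cleaning up. The paper makes one point explicit that you leave implicit: in case~(1) the number $q/p$ is an \emph{even} power of~$2$, so the second batch of slides contributes an even number of copies of any $\eta$ landing in $c_1$ to $\M(a',c_2)$, and these cancel---this is precisely why case~(1) carries no new short edges on the $c$-level whereas case~(2) (with $q/p=1$) does. Your description of cases~(5) and~(6) as ``one of the pairs $c_i$--$d_i$ is absent'' is slightly off: in case~(5) only the object $c_1$ is missing (so the first slide creates no unwanted $-q$ edge and one stops there), not the whole pair.
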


\begin{proof}
Call the objects $b,c_1,d_1,c_2,d_2$ where $\M(b,d_1)=\eta=\M(b,d_2)$, $\M(c_1,d_1)$ has $q$ points and $\M(c_2,d_2)$ has $p$ points. In the following score diagrams the arrows indicate the level of the handle slide and its direction.
\begin{center}
\begin{tikzpicture}
\scoretx{0.6}{2.4}{0.7}
\hookbe{1.1}{$q$};
\hookmibe{1.9}{$p$};
\shookl{1.1}{0};
\draw [->] (3.3,0) -- (4.5,0);
\node at (3.9,0.2) {$\times 1$};
\scoretx{0.6}{2.4}{4.7}
\hookbe{5.1}{$q$};
\hookmib{6.7}{$p$};
\draw [-] (5.1,0.6) -- (6.7,0);
\node at (5.9,0.4) {$-q$};
\shookl{5.1}{0};
\shookl{6.7}{0};
\draw [->] (7.3,0.6) -- (8.5,0.6);
\node at (7.9,0.8) {$\times q/p$};
\scoretx{0.6}{2.4}{8.7}
\hookbe{9.1}{$q$};
\hookmib{10.7}{$p$};
\shookl{9.1}{0};
\shookl{10.7}{0};
\end{tikzpicture}
\end{center}
Note that if $q>p$, the second handle slide is done an even number of times, so that any $\eta$ landing in $c_1$ has no effect. However, if $q=p$, each $\eta$ in $\M(a,c_1)$ for some object $a$ with $|a|=|c_1|+2$ leads to an $\eta$ in $\M(a,c_2)$ (although it may cancel with a pre-existing $\eta$ using a Whitney trick, not affecting other parts of the score). Similarly, any $\eta$ in $\M(c_2,e)$ leads to an $\eta$ in $\M(c_1,e)$. This shows the cases (1) and (2). If the object $c_1$ is not present, we can stop after the first slide and we get case (5). The dual cases are done by turning the score diagrams and the handle slides upside down.
\end{proof}

A similar result holds if a non-empty $0$-dimensional moduli space sits above or below the $\eta$:

\begin{lemma}
 \label{lem:hook_te}
Let $q>p\geq 2$ be powers of $2$ and $\cC$ a reduced framed flow category with score $\Sigma(\cC)$ as given in one of the cases below. Then $\cC$ is move equivalent to a reduced framed flow category $\cC'$ with score $\Sigma(\cC')$ differing from $\Sigma(\cC)$ only as given in the local pictures below.
\begin{center}
\begin{tikzpicture}
\draw (-1,0.9) node{$(1)$};
\draw (0,0.9) node{$\Sigma(\cC)\colon$};
\scorex{0.6}{2.4}{0.7}
\hookmiet{1.1}{$q$};
\hooket{1.9}{$p$};
\shookl{3}{1};
\draw [->] [decorate, decoration={snake}] (3.3,0.9) -- (4.5,0.9);
\draw (5.3,0.9) node{$\Sigma(\cC')\colon$};
\scorex{0.6}{2.4}{6}
\hookt{6.4}{$q$};
\hooket{7.2}{$p$};
\shookl{6.7}{1};
\shookl{8.3}{1};
\end{tikzpicture}
\end{center}
\vspace{0.3cm}
\begin{center}
\begin{tikzpicture}
\draw (-1,0.9) node{$(2)$};
\draw (0,0.9) node{$\Sigma(\cC)\colon$};
\scorex{0.6}{2.4}{0.7}
\hookmiet{1.1}{$p$};
\hooket{1.9}{$p$};
\shookl{3}{1};
\shookl{3}{1.6};
\draw [->] [decorate, decoration={snake}] (3.3,0.9) -- (4.5,0.9);
\draw (5.3,0.9) node{$\Sigma(\cC')\colon$};
\scorex{0.6}{2.4}{6}
\hookt{6.4}{$p$};
\hooket{7.2}{$p$};
\shookl{6.7}{1.6};
\shookl{8.3}{1};
\shookl{8.3}{1.6};
\shookl{6.7}{1};
\end{tikzpicture}
\end{center}
\vspace{0.3cm}
\begin{center}
\begin{tikzpicture}
\draw (-1,0.9) node{$(3)$};
\draw (0,0.9) node{$\Sigma(\cC)\colon$};
\scorex{0.6}{2.4}{0.7}
\hookmie{1.1};
\hooket{1.9}{$p$};
\shookl{3}{1};
\draw [->] [decorate, decoration={snake}] (3.3,0.9) -- (4.5,0.9);
\draw (5.3,0.9) node{$\Sigma(\cC')\colon$};
\scorex{0.6}{2.4}{6}
\dotb{6.4};
\hooket{7.2}{$p$};
\shookl{8.3}{1};
\shookl{6.7}{1};
\end{tikzpicture}
\end{center}
\vspace{0.3cm}
\begin{center}
\begin{tikzpicture}
\draw (-1,0.9) node{$(4)$};
\draw (0,0.9) node{$\Sigma(\cC)\colon$};
\scorex{0.6}{2.4}{0.7}
\hookmie{1.1};
\hooke{1.9};
\shookl{3}{1};
\draw [->] [decorate, decoration={snake}] (3.3,0.9) -- (4.5,0.9);
\draw (5.3,0.9) node{$\Sigma(\cC')\colon$};
\scorex{0.6}{2.4}{6}
\dotb{6.4};
\hooke{7.2};
\shookl{8.3}{1};
\shookl{6.7}{1};
\end{tikzpicture}
\end{center}
Again we have the dual cases obtained by turning the scores upside down.
\end{lemma}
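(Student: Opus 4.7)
The plan is to mirror the proof of Lemma \ref{lem:hook_be}, with the handle slides now carried out at the upper levels of the score, where the $0$-dimensional moduli spaces reside. The combinatorics is essentially identical: a first handle slide kills the extra $\eta$ (using $2\eta = 0 \in \Omega_1^{fr}$), and a second sequence of slides cleans up the collateral $0$-dimensional damage, using the arithmetic relation between $p$ and $q$.

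For case (1), label the five objects $d$ (at grading $n$), $c_1, c_2$ (at grading $n+2$), and $a_1, a_2$ (at grading $n+3$), with $\M(a_1,c_1)$ consisting of $q$ positively framed points, $\M(a_2,c_2)$ of $p$ positively framed points, and $\M(c_1,d)=\M(c_2,d)=\eta$. First, perform a $(+)$-handle slide of $c_1$ over $c_2$. By the formulas in Section \ref{sec:handleslide}, $\M(c_1,d)$ becomes $\eta\sqcup\eta=2\eta$, which vanishes in $\Omega_1^{fr}\cong\Z/2\Z$, so an extended Whitney trick replaces it with $\emptyset$. The same slide introduces into $\M(a_1,c_2)$ a new copy of $\M(a_1,c_1)$ with opposite framing, namely $q$ negatively framed points. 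Next, perform $q/p$ iterations of $(+)$-handle slides of $a_1$ over $a_2$, each of which adds $p$ positively framed points to $\M(a_1,c_2)$. After all $q/p$ iterations, $\M(a_1,c_2)$ contains $q$ pairs of oppositely framed points that cancel via extended Whitney tricks. In the local picture, all other moduli spaces are unaffected because the relevant contributions from $\M(a_2,c_1)$ and from any outgoing edges of $c_1$ other than $\M(c_1,d)$ are empty.

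Cases (2), (3), and (4) follow analogously. In case (2), $p=q$ so exactly one cleanup slide suffices, and the extra short edges in $\Sigma(\cC')$ record the propagation of incoming moduli spaces of $c_1$ into those of $c_2$, and outgoing moduli spaces of $a_1$ into those of $a_2$. In cases (3) and (4), the object $a_1$ (and possibly $a_2$) is absent, so the single $(+)$-handle slide of $c_1$ over $c_2$ already achieves the desired score with no cleanup needed. The dual cases mentioned in the final sentence of the statement are proved by the same argument applied to the upside-down scores. The main point to verify in every case is that the handle slides propagate only as indicated by the short edges; this is a routine check from the explicit formulas in Section \ref{sec:handleslide} together with the fact that any closed $1$-dimensional framed moduli spaces thereby created are either empty, a single $\eta$, or framed-null-cobordant pairs removable by an extended Whitney trick.
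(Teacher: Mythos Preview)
Your proof is correct and follows essentially the same approach as the paper: a single handle slide of $c_1$ over $c_2$ at the middle level to cancel the $\eta$'s, followed by $q/p$ handle slides of $a_1$ over $a_2$ at the top level to clean up the resulting $-q$ points in $\M(a_1,c_2)$. One small slip: in your description of case~(2), the new short edges in $\Sigma(\cC')$ actually record that \emph{outgoing} edges of $c_2$ propagate to $c_1$ (from the first slide) and outgoing edges of $a_2$ propagate to $a_1$ (from the second), not the directions you wrote; this does not affect the argument, since the slides themselves are stated correctly.
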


\begin{proof}
Call the objects $a_1,b_1,a_2,b_2,d$ where $\M(b_1,d)=\eta=\M(b_2,d)$, $\M(a_1,b_1)$ has $q$ points and $\M(a_2,b_2)$ has $p$ points. We do handle slides as follows.
\begin{center}
\begin{tikzpicture}
\scorex{0.6}{2.4}{0.7}
\hookmiet{1.1}{$q$};
\hooket{1.9}{$p$};
\shookl{3}{1};
\draw [->] (3.3,1.2) -- (4.5,1.2);
\node at (3.9,0.95) {$\times 1$};
\scorex{0.6}{2.4}{4.7}
\hookt{5.1}{$q$};
\hooket{5.9}{$p$};
\draw [-] (5.1,1.8) -- (6.7,1.2);
\node at (5.9,1.6) {$-q$};
\shookl{5.4}{1};
\shookl{7}{1};
\draw [->] (7.3,1.8) -- (8.5,1.8);
\node at (7.9,1.55) {$\times q/p$};
\scorex{0.6}{2.4}{8.7}
\hookt{9.1}{$q$};
\hooket{9.9}{$p$};
\shookl{9.4}{1};
\shookl{11}{1};
\end{tikzpicture}
\end{center}
This shows case (1). The other cases follow as in the proof of Lemma \ref{lem:hook_be}.
\end{proof}

\section{The Chang spaces}
\label{sec:chang}
Equipped with our new score notation and the Smith normal form, we move on to the next simplification of a general width 3 flow category, which we will call the \emph{Chang form}. The terminology is a reference to the stable homotopy classification of CW complexes of homological width $2$, which was first obtained by Chang~\cite{Chang}.

\begin{definition}
	Let $p,q\geq 2$ be powers of $2$ and $n\in \Z$. Let $\cC(_q\eta p,n)$ be the framed flow category with objects $a,b_1,b_2,c$, where $|a|-2 = |b_1|-1=|b_2|-1=|c|=n$, $\M(a,c)=\eta$, $\M(a,b_1)$ consists of $p$ positively framed points, $\M(b_2,c)$ consists of $q$ positively framed points, and all other moduli spaces are empty.
	
	Let $\cC(\eta p,n)$ be the full subcategory with objects $a,b_1,c$, $\cC(_q\eta,n)$ the full subcategory with objects $a,b_2,c$, and $\cC(\eta,n)$ the full subcategory with objects $a,c$.
	
	For $w\in \{_q\eta p,\, _q\eta, \eta p,\eta\}$ we call $\cC(w,n)$ the \emph{Chang flow category of the word $w$}.
\end{definition}

We shall often treat $n$ as unspecified, or simply as $n=0$, and shorten the name of the flow category to $\cC(w)$.

The scores of these four types of Chang flow categories are given by 
\begin{center}
\begin{tikzpicture}
\scoret{0.6cm}{8.8}
\hookbem{1}{$q$}{$p$};
\hookbe{3}{$q$};
\hookem{5}{$p$};
\hooke{7};
\end{tikzpicture}
\end{center}
\begin{definition}A framed flow category of width 3 is in \emph{Chang form} if the following conditions are satisfied:
\begin{enumerate}
\item It is in primary Smith normal form.
\item Every non-empty 1-dimensional moduli space is a single framed circle $\eta$.
\item In the score of the category, the maximum number of length 2 edges connecting to any given vertex in level 0, or in level 2, is one.
\end{enumerate}
\end{definition}

So if a framed flow category of width 3 is in Chang form, removing the objects of degree 3 leads to a disjoint union of elementary Moore and Chang flow categories. Notice that a flow category is reduced if only (1) and (2) are satisfied.

\begin{theorem}\label{thm:chang}Any framed flow category with homological width 3 is move equivalent to a framed flow category in Chang form.
\end{theorem}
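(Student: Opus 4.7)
The plan is to perform four sequential simplifications on $\cC$. First, apply Theorem~\ref{thm:smith} to replace $\cC$ by a move-equivalent framed flow category in primary Smith normal form; since any Moore pair in primary Smith normal form occupies two adjacent degrees (contributing torsion to homology in the lower and to cohomology in the higher), the homological-width-3 hypothesis forces every object to lie in at most four consecutive degrees, securing condition (1) of Chang form and ensuring width 3. Next, observe that each 1-dimensional moduli space $\M(x,y)$, $|x|-|y|=2$, is a closed framed 1-manifold, hence classified up to framed cobordism by $\Omega_1^{fr}\cong \Z/2\Z$; extended Whitney tricks then replace each such moduli space by either empty or a single copy of $\eta$, yielding condition (2) and making $\cC$ reduced.

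Third, apply Corollary~\ref{cor:odd_Moore} repeatedly to split off an elementary Moore summand $\Mo(\Z/q^k\Z)$ for each odd prime power $q^k$ occurring in $\widetilde{H}_*(\cC;\Z)$; the corollary's hypothesis $r\leq 2p-3$ holds for width 3 whenever $p\geq 3$. Since disjoint unions of Chang-form categories remain in Chang form (elementary Moore categories being trivially so), we focus on the residual factor, in which every nonempty 0-dimensional moduli space consists of a power-of-$2$ number of points.

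Fourth and most substantively, we enforce the matching condition (3) on length-2 edges between levels 0 and 2. For each level-2 vertex $b$ incident to $\eta$-edges terminating at multiple level-0 vertices $d_1,\ldots,d_m$, pick a pair and apply the appropriate case of Lemma~\ref{lem:hook_be}: cases (1)--(4) handle Moore-paired $d_i$ and cases (5)--(6) handle a standalone $d_i$. Each invocation strictly reduces the length-2 edge count at $b$ and introduces only length-1 decorations, indicated by the short edges, which do not affect the Chang condition. Dually, for each level-0 vertex $d$ incident to $\eta$-edges from multiple level-2 sources $b_1,\ldots,b_m$, apply Lemma~\ref{lem:hook_te} (when the $b_i$ are Moore-paired with level-3 vertices) or the dual cases of Lemma~\ref{lem:hook_be} (when paired with level-1 vertices, or standalone).

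The main obstacle lies in controlling cascading effects in step four: a handle slide of $d_1$ over $d_2$ at level 0 modifies every $\M(a,d_1)$ for which $\M(a,d_2)$ is nonempty, so it can create or cancel $\eta$-edges at other level-2 vertices and temporarily disturb the reduced primary Smith normal form established earlier. The key is that the total number of length-2 edges between levels 0 and 2 serves as a well-founded complexity measure: each application of Lemma~\ref{lem:hook_be} or~\ref{lem:hook_te} strictly decreases it, and any incidental damage can be repaired by interleaved extended Whitney tricks and handle cancellations without raising this count. Hence the process terminates with a framed flow category in Chang form.
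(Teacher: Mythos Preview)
Your first three steps (primary Smith normal form, reducing $1$-dimensional moduli spaces via extended Whitney tricks, and splitting off odd Moore summands via Corollary~\ref{cor:odd_Moore}) match the paper's argument. The gap is in step four.

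The claim that the total number of $\eta$-edges between levels $0$ and $2$ strictly decreases under each application of Lemma~\ref{lem:hook_be} or~\ref{lem:hook_te} is false. Trace through the handle slides in the proof of Lemma~\ref{lem:hook_be}(1): the first move is a slide of $d_1$ over $d_2$ at level $0$, which replaces $\M(b',d_2)$ by $\M(b',d_2)\sqcup(\pm\M(b',d_1))$ for \emph{every} level-$2$ object $b'$, not just the chosen $b$. Thus if some other $b'$ has $\M(b',d_1)=\eta$ and $\M(b',d_2)=\emptyset$, the slide creates a new $\eta$-edge from $b'$ to $d_2$ while removing the one from $b$ to $d_2$: net change zero. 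These are exactly the effects the short blue edges in the lemma's diagrams warn about, and they are genuinely new $\eta$-edges, not repairable by Whitney tricks without re-raising the count.

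The paper avoids this by not using a global edge count at all. Instead it defines a pre-order $\preceq$ on the level-$0$ objects (by the size of their incoming $0$-dimensional moduli space) and a pre-order $\preccurlyeq$ on the level-$2$ objects attached to a given $x$, then selects $x$ maximal for $\preceq$ and $b\in B(x)$ maximal for $\preccurlyeq$. Maximality of $b$ is exactly what guarantees the inequality hypotheses $q\geq p$ in Lemmas~\ref{lem:hook_be} and~\ref{lem:hook_te} hold in the direction needed to strip all $\eta$-edges into $x$ except the one from $b$; maximality of $x$ then lets Lemma~\ref{lem:hook_be} strip all remaining $\eta$-edges out of $b$. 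After this the pair $(b,x)$ satisfies condition~(3) and is isolated from the rest of the level $0$--$2$ structure, so one inducts on the number of objects. Your proposal is missing this ordering idea, and without it the termination argument does not go through.
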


\begin{proof}By Theorem \ref{thm:smith} we may perform moves so that the category is in primary Smith normal form. In this form, all 1-dimensional moduli spaces must now be \emph{closed} 1-manifolds. Using the extended Whitney trick we assume that $\cC$ is reduced.

By Corollary \ref{cor:odd_Moore} we can assume that any pair of objects $x$ and $y$ with $\M(x,y)$ an odd number of points, is isolated from the remaining flow category.

We now prove the statement for all width 3 framed flow categories which are reduced and whose homology does not contain odd torsion, using induction on the number of objects. If there is only one object, the flow category is clearly in Chang form.

Let $X$ be the set of objects of minimal degree. If for some $x\in X$ all $1$-dimensional moduli spaces $\M(b,x)$ are empty, we can ignore $x$ and deduce the result by induction. Hence we can assume that for all $x\in X$ there is an object $b$ with $\M(b,x)=\eta$.

As $\cC$ is in primary Smith normal form, for each $x\in X$ there is at most one non-empty $0$-dimensional moduli space $\M(c,x)$. Let $S(x)$ be the number of points in this $0$-dimensional moduli space, or set $S(x)=\infty$ if there is none. We define a pre-order on $X$ by $x \preceq x'$ if $S(x)\leq S(x')$.

Pick an $x\in X$ which is maximal in this pre-order. Let 
\[
B(x)=\{b\in \Ob(\cC)\,|\, \M(b,x)=\eta\}. 
\]
For $b\in B(x)$ define $S(b)\in \Q$ to be
\[
 S(b)= \left\{\begin{array}{cl}
              1/|\M(a,b)| & \mbox{if }\M(a,b)\not=\emptyset \mbox{ for some }a\mbox{ with }|a|-1=|b| \\
              -1/|\M(b,c)| & \mbox{if }\M(b,c)\not=\emptyset \mbox{ for some }c\mbox{ with }|b|=|c|+1 \\
              0 &\mbox{otherwise.}
             \end{array}
\right.
\]
Define a pre-order on $B(x)$ by $b \preccurlyeq b'$ if $S(b)\leq S(b')$, and let $b\in B(x)$ be maximal in this pre-order. We can visualize this pre-order as
\begin{center}
\begin{tikzpicture}
\scorex{0.6}{1.4}{0}
\hookem{0.3}{2};
\node at (1.8,0.9) {$\preccurlyeq$};
\scorex{0.6}{1.4}{2.2}
\hookem{2.4}{4};
\node at (4.4,0.9) {$\preccurlyeq \cdots \preccurlyeq$};
\scorex{0.6}{1.4}{5.2};
\hooke{5.5};
\node at (7.4,0.9) {$\preccurlyeq \cdots \preccurlyeq$};
\scorex{0.6}{1.4}{8.2}
\hooket{8.5}{4};
\node at (10,0.9) {$\preccurlyeq$};
\scorex{0.6}{1.4}{10.4}
\hooket{10.7}{2};
\end{tikzpicture}
\end{center}

By Lemmas \ref{lem:hook_be} and \ref{lem:hook_te} we get that $\cC$ is move equivalent to $\cC'$ in primary Smith normal form such that $\M_{\cC'}(b,x)=\eta$, and $\M_{\cC'}(b',x)=\emptyset$ for all objects $b'\not=b$ with $|b'|=|b|$. Using extended Whitney tricks, we can assume $\cC'$ to be reduced.

We may have $\M(b,x')=\eta$ for other $x'\in X\setminus\{x\}$, but since $x$ was chosen to be maximal with respect to $\preceq$, we can use Lemma \ref{lem:hook_be} to get $\M(b,x')=\emptyset$ for all $x'\in X\setminus\{x\}$. We now satisfy condition (3) for $x$ and $b$, and we can proceed by induction to get our flow category into Chang form.\end{proof}

\begin{remark}\label{rem:algo_Chang}
The proof describes an algorithm to turn a framed flow category in primary Smith normal form into a framed flow category in Chang form: identify a pair of objects $(b,x)$ maximal in the pre-orders, and perform handle slides as in Lemma \ref{lem:hook_te} and \ref{lem:hook_be}. Repeat until the flow category is in Chang form. This algorithm has been implemented in a computer program by the third author \cite{SchuetzKJ}.
\end{remark}

At this stage we can prove a weaker version of Theorem \ref{thm:main}.

\begin{corollary}\label{cor:changclassification}Suppose $\cC_1$ and $\cC_2$ are framed flow categories such that there is a homotopy equivalence of spectra $\mathcal{X}(\cC_1)\simeq \mathcal{X}(\cC_2)$, and $\cC_1$ is width $2$. Then $\cC_1$ and~$\cC_2$ are move equivalent.
\end{corollary}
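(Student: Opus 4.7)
The plan is to reduce both framed flow categories to Chang form via Theorem~\ref{thm:chang}, and then argue that a Chang form flow category of width~$2$ is determined, up to move equivalence, by its stable homotopy type. First, apply Theorem~\ref{thm:chang} to both $\cC_1$ and $\cC_2$, so that we may replace each with its Chang form. Since $\cC_1$ has width~$2$, it has homological width at most~$2$, and hence so does $\cC_2$ by the hypothesis $\X(\cC_1) \simeq \X(\cC_2)$.

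A Chang form flow category of width~$2$ decomposes as a disjoint union of elementary pieces: by primary Smith normal form, the length-$1$ edges (nonempty $0$-dimensional moduli spaces) form a matching among adjacent-level objects, while condition~(3) of Chang form forces the length-$2$ edges ($\eta$'s between level $0$ and level $2$) to form a matching as well. Hence the graph $\Gamma_2$ of the category is a disjoint union of connected components, each one the score of a sphere category $\Sp^n$, an elementary Moore category $\Mo(\Z/p^k\Z,n)$, or a Chang category $\cC(w,n)$ for some $w \in \{_q\eta p,\, _q\eta,\, \eta p,\, \eta\}$. For $\cC_2$, a preliminary step is needed to bring its Chang form down to width $2$; the homological width~$\leq 2$ hypothesis, together with primary Smith normal form, forces any objects in an extra fourth level to arise in Moore pairs, and these can be separated from the main body of the category using extended Whitney tricks and handle cancellations of the sort developed in Section~\ref{sec:calculus}.

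With both $\cC_1$ and $\cC_2$ expressed as disjoint unions of elementary pieces, their stable homotopy types are wedges of the stable homotopy types of the constituent pieces. Each elementary piece listed above has a distinct stable homotopy type, detectable by reduced integral homology (distinguishing spheres from Moore spectra, and separating different prime powers at each degree) together with the $\Z/2$-cohomology and its $\Sq^2$-action (detecting $\eta$-attachments between cells in degrees differing by~$2$). A Krull--Schmidt style argument for finite spectra then yields uniqueness of the wedge decomposition, so $\cC_1$ and $\cC_2$ must realize the same multiset of elementary pieces in their Chang forms, and are therefore move equivalent.

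The main obstacle is verifying the two uniqueness statements: first, that the Chang form of $\cC_2$ can be arranged to be of width~$2$ rather than the width~$3$ that Theorem~\ref{thm:chang} \emph{a priori} provides; and second, that the decomposition of the stable homotopy type into the wedge of elementary Moore, sphere, and Chang spectra is unique. The second is closely related to Chang's classification of stable homotopy types of homological width~$\leq 2$, but since our elementary pieces form an explicit short list distinguishable by cohomology together with the $\Sq^2$ operation, we can bypass Chang's full classification, consistent with Remark~\ref{rem:notchang}.
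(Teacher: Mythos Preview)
Your approach is essentially the same as the paper's: put both categories into Chang form via Theorem~\ref{thm:chang}, observe that the result decomposes as a disjoint union of elementary Moore, sphere, and Chang flow categories, and then invoke uniqueness of that decomposition. The paper cites Chang's classification \cite{Chang} for the uniqueness step, while you propose distinguishing the pieces by cohomology together with $\Sq^2$; this is exactly the alternative mentioned in Remark~\ref{rem:notchang}, so both routes are legitimate.

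Your first ``obstacle'' --- arranging that the Chang form of $\cC_2$ has width~$2$ --- is not actually an obstacle, and your proposed fix (separating off fourth-level Moore pairs by hand) is unnecessary. The hypothesis is that $\cC_1$ has \emph{width}~$2$, not merely homological width~$2$. If the objects of $\cC_1$ lie in degrees $m, m+1, m+2$, then $H_{m+2}(\cC_1)$ is a subgroup of the free abelian group $C_{m+2}$ and hence is torsion-free. Since $\X(\cC_1)\simeq \X(\cC_2)$, the same holds for $\cC_2$. Consequently, when $\cC_2$ is put into primary Smith normal form, all Moore pairs sit in degrees $\{m,m+1\}$ or $\{m+1,m+2\}$ and all sphere objects sit in $\{m,m+1,m+2\}$: the resulting category already has width~$\leq 2$, with no fourth level to worry about. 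The paper's one-line assertion ``because their homological width is $2$, the Chang forms are disjoint unions of Chang flow categories and elementary Moore flow categories'' is implicitly using exactly this observation.
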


\begin{proof}
We can use Theorem \ref{thm:chang} to get both $\cC_1$ and $\cC_2$ into Chang form. Because their homological width is $2$, the Chang forms are disjoint unions of Chang flow categories and elementary Moore flow categories. By the uniqueness of the Chang homotopy types, see \cite{Chang}, the two Chang forms are equal up to stabilization and isotopy.
\end{proof}

The reader may wonder to what extent we require Chang's classification \cite{Chang} and to what extent we recover it. We argue that in fact we can recover Chang's classification by our methods, but only stably. To show this, we first record a proposition that may be known to experts but, to our knowledge, has not appeared in the literature before.

\begin{proposition}\label{prop:CWisflow} Let $X$ be a finite CW complex. Then there exists a framed flow category $\cC$ such that there is a stable homotopy equivalence $\mathcal{X}(\cC)\simeq \Sigma^\infty X$.
\end{proposition}

\begin{proof} By suspending $X$ once, we may assume that $X$ is simply connected. We first show that any simply connected CW complex is homotopy equivalent to a compact smooth manifold of some dimension. We employ some arguments from piecewise linear ($\PL$) topology and we use the definitions and results from \cite{MR0248844}.

There exists a finite polyhedron $K$ that is homotopy equivalent to $X$. The polyhedron $K$ is a subset of $\R^n$ for some $n$, and we assume for later that $n\geq 6$. The polyhedron $K\subset \R^n$ has a closed neighbourhood $N$, such that $N\simeq K$, and such that $N$ is a $\PL$ manifold with boundary \cite[Theorem II.2.11]{MR0248844}. This is also called a \emph{$\PL$ regular neighbourhood} of $K$.

Consider that $\operatorname{int}(N):=N\setminus \partial N$ is an open subset of $\R^n$, and so is endowed with the structure of a smooth manifold. We wish to show that the smooth manifold $\operatorname{int}(N)$ is moreover the interior of a smooth manifold with boundary by applying \cite[Theorem 1]{MR189046}. As $n\geq 6$, all that we must check is that $\operatorname{int}(N)$ is \emph{simply connected at infinity}, that is to say for any compact $C\subset \operatorname{int}(N)$, there is a compact $D$ with $C\subset D\subset \operatorname{int}(N)$ such that $\operatorname{int}(N)\setminus D$ is simply connected. For this, we will furthermore assume that we increased $n$ so that $n>k+2$, where $k$ denotes the dimension of the polyhedron $K$.

Now consider that as $X$ was simply connected, so is $N$. Hence any loop $\gamma\subset \partial N$ is nullhomotopic in $\operatorname{int}(N)$. We may assume that the loop is a $\PL$ submanifold of $\partial N$ and that the nullhomotopy is represented by a $\PL$ disc in $N$. Moreover, as $n>k+2$,  we may assume by general position (see \cite[\textsection 4]{MR0248844}) that the disc does not intersect $K$. Now take a sufficiently small \emph{derived neighbourhood} $N'$ of $K$ (see \cite[\textsection I.2]{MR0248844}) so that the disc lies entirely in $N\setminus N'$. The derived neighbourhood $N'$ is again a regular neighbourhood \cite[Theorem II.2.11]{MR0248844}, so by the generalised $\PL$ annulus property \cite[Corollary II.2.16.2]{MR0248844}, there is a $\PL$ homeomorphism between $\operatorname{cl}(N\setminus N')$ and $\partial N\times[0,1]$. Thus the loop $\gamma$ is contractible in $\partial N$, and it follows that $\operatorname{int}(N)$ is simply connected at infinity, as required.

We write $(Z,\partial Z)$ for a smooth manifold with boundary such that $Z\setminus \partial Z$ is diffeomorphic to $N\setminus \partial N$. Note that $Z$ is homotopy equivalent to the original CW complex $X$. Pick a generic Morse function on $Z$. One may then obtain a framed flow category $\cC$ by the method described by Cohen-Jones-Segal \cite{CJS} (see also \cite{cohenrevisit}). There are then stable homotopy equivalences $\Sigma^\infty X\simeq\Sigma^\infty Z\simeq \mathcal{X}(\cC)$.
\end{proof}

With this we can recover the main classification result of \cite{Chang}, but only up to stable homotopy.

\begin{corollary}\label{cor:changrecover} Every width 2 finite CW complex whose reduced homology is supported in degree $\geq 4$ is stably homotopy equivalent to one of the form $X_1\vee X_2\vee \dots\vee X_N$, where each $X_i$ is a Chang space, and this decomposition is unique up to permutation.
\end{corollary}

\begin{proof} Given such a CW complex $X$, obtain a framed flow category $\cC$ as in Proposition \ref{prop:CWisflow}. Now apply Theorem \ref{thm:chang} so that $\cC$ is in Chang form. This provides a stable homotopy equivalence from $X$ to $X_1\vee X_2\vee \dots\vee X_N$, where each $X_i$ is a Chang space.

Finally, we do not need to reference \cite{Chang} to obtain the uniqueness of the various stable homotopy types which arise from the collection of Chang flow categories. This can alternatively be derived from kernels and ranks of the second Steenrod squares and various Bockstein homomorphisms, as described in \cite[\S 4]{LipSarSq}.
\end{proof}

\section{The Baues-Hennes spaces}
\label{sec:baues_hennes}

In \cite{BauHen}, Baues and Hennes classify $(n+3)$-dimensional $(n-1)$-connected finite CW complexes for $n\geq 4$; see also \cite{Baues}. In this section we describe their list in terms of framed flow categories. It is worth pointing out that \cite{Baues} and~\cite{BauHen} use different conventions regarding the torsion coefficients. We will follow the convention of \cite{Baues}. 

Let
\begin{align*}
R &= \{ (r,-1)\in \Z^2\,|\, r = 2^i, i\geq 1\}\\
T &= \{ (t,0)\in \Z^2\,|\, t = 2^i, i\geq 1\}\\
S &= \{ (s,1)\in \Z^2\,|\,s = 2^i, i\geq 1\}
\end{align*}
and let
\[
\mathcal{A}= R \cup T \cup S \cup \{\xi, \eta, \varepsilon\}
\]
be an alphabet. We denote the empty word in $\mathcal{A}$ as $\emptyset$.
\begin{definition}
A finite word $w=w_1\cdots w_k$ with $k\geq 1$ in the alphabet $\mathcal{A}$ is called a \emph{basic word}, if
\begin{enumerate}
	\item $w_1\in R\cup S \cup \{\xi, \eta\}$.
	\item For $1 \leq i \leq k-1$,
	\begin{itemize}
		\item if $w_i\in S$, then $w_{i+1}= \eta$.
		\item if $w_i=\eta$, then $w_{i+1} \in R$.
		\item if $w_i\in R$, then $w_{i+1}= \xi$.
		\item if $w_i=\xi$, then $w_{i+1}\in S$.
	\end{itemize}
\end{enumerate}
\end{definition}
To streamline notation, we write $_r=(r,-1)$, $t=(t,0)$ and $^s=(s,1)$. A basic word is then a finite, non-empty subword of
\[
\cdots \xi^{s_{i}}\eta_{r_i}\xi^{s_{i+1}}\eta_{r_{i+1}} \cdots
\]
with $r_i,s_i$ powers of $2$. 

Recall that we use $\eta$ to be mean a non-trivially framed circle. In a basic word the letter $\eta$ is indeed meant to symbolize such a framed circle. In fact, the letter $\xi$ also stands for a non-trivially framed circle, but on a different level. As we shall see below, the letter $\varepsilon$ is representing the non-trivially framed torus, which we also call $\varepsilon$.

\begin{definition}
	\label{defn:bh_flow_category}
	Let $w=w_1\cdots w_k$ be a basic word, and $n\in \Z$. A reduced framed flow category $\cC$ is called a \emph{Baues--Hennes flow category of $w$}, if the following are satisfied.
	\begin{enumerate}
		\item The object set is $\Ob(\cC) = \{x_1,\ldots,x_{k+1}\}$.
		\item For $1 \leq i \leq k$,
		\begin{itemize}
			\item if $w_i\in S$, then $|x_i|=n+3$, $|x_{i+1}|=n+2$ and $\M(x_i,x_{i+1})$ consists of $w_i$ positively framed points.
			\item if $w_i=\eta$, then $|x_i|=n+2$, $|x_{i+1}|=n$ and $\M(x_i,x_{i+1})=\eta$.
			\item if $w_i\in R$, then $|x_i|=n$, $|x_{i+1}|=n+1$ and $\M(x_{i+1},x_i)$ consists of $w_i$ positively framed points.
			\item if $w_i=\xi$, then $|x_i|=n+1$, $|x_{i+1}|= n+3$ and $\M(x_{i+1},x_i)=\eta$. 
		\end{itemize}
		\item No other $0$- or $1$-dimensional moduli spaces are non-empty.
	\end{enumerate}
	We refer to $x_1$ as the \emph{start object}, and to $x_{k+1}$ as the \emph{end object} of the Baues--Hennes flow category.
\end{definition}

It is easy to see that for every basic word $w$ there exists a Baues--Hennes flow category of $w$, and its score is a sub-tree of
\begin{center}
\begin{tikzpicture}
\score{0.6}{5.5}
\hookx{1};
\hookteb{1.8}{$s_i$}{$r_i$};
\hookx{2.6};
\hookteb{3.4}{$s_{i+1}$}{$r_{i+1}$};
\draw (1,0.6) -- (1,0.3);
\draw (4.2,0.6) -- (4.4,0.9);
\node at (0.5,0.9) {$\cdots$};
\node at (5,0.9) {$\cdots$};
\end{tikzpicture}
\end{center}

\begin{definition} Given a word $w=w_1\cdots w_k$ with all $w_i\in\mathcal{A}$, we write $\bar{w} = w_k\cdots w_1$.
\end{definition}

\begin{definition}
	A finite word $w=u_k\cdots u_1 t v_1\cdots v_l$ in $\mathcal{A}$ is called a \emph{central word}, if $u = u_1\cdots u_k$ and $v = v_1\cdots v_l$ are either $\emptyset$ or basic words, and $t\in T$. Furthermore, if $u$ is non-empty, then $u_1=\eta$, and if $v$ is non-empty, then $v_1=\xi$.
	
	A reduced framed flow category $\cC$ is called a \emph{central Baues--Hennes flow category of the word $w$}, if the following are satisfied.
	\begin{enumerate}
		\item The object set is $\Ob(\cC) = \{x_{-k},\ldots,x_0,x_1,\ldots,x_{l+1} \}$.
		\item The full subcategory with objects $\{x_0,\ldots,x_{-k} \}$ is a Baues--Hennes category of $u$ with start object $x_0$ if $u$ is non-empty, or $\Sp^{n+2}$ otherwise.
		\item The full subcategory with objects $\{x_1,\ldots,x_{l+1}\}$ is a Baues--Hennes category of $v$ with start object $x_1$ if $v$ is non-empty, or $\Sp^{n+1}$ otherwise.
		\item The $0$-dimensional moduli space $\M(x_0,x_1)$ consists of $t$ positively framed points.
		\item No other $0$- or $1$-dimensional moduli spaces are non-empty.
	\end{enumerate}
\end{definition}

The score of a central Baues--Hennes flow category is a sub-tree of
\[
\begin{tikzpicture}
\score{0.6}{6.3}
\hookbet{1}{$s_0$}{$r_{-1}$};
\hookmix{1.8};
\hookbem{2.6}{$r_0$}{ };
\hookx{3.4};
\hookteb{4.2}{$s_{1}$}{$r_{1}$};
\draw (1,0.6) -- (0.8,0.9);
\draw (5,0.6) -- (5.2,0.9);
\node at (0.5,0.9) {$\cdots$};
\node at (5.8,0.9) {$\cdots$};
\node at (3.5,0.9) {$t$};
\end{tikzpicture}
\]
containing the $t$-edge.

\begin{definition}
	A finite word $w=u_k\cdots u_1\varepsilon v_1\cdots v_l$ in $\mathcal{A}$ is called an $\varepsilon$-word, if $u = u_1\cdots u_k$ and $v = v_1\cdots v_l$ are either $\emptyset$ or basic words. Furthermore, if $u$ is non-empty, then $u_1\in S$, and if $v$ is non-empty, then $v_1\in R$.
	
	A reduced framed flow category $\cC$ is called an \emph{$\varepsilon$-Baues--Hennes flow category of the word $w$}, if the following are satisfied.
	\begin{enumerate}
		\item The object set is $\Ob(\cC) = \{x_{-k},\ldots,x_0,x_1,\ldots,x_{l+1} \}$.
		\item The full subcategory with objects $\{x_0,\ldots,x_{-k} \}$ is a Baues--Hennes category of $u$ with start object $x_0$ if $u$ is non-empty, or $\Sp^{n}$ otherwise.
		\item The full subcategory with objects $\{x_1,\ldots,x_{l+1}\}$ is a Baues--Hennes category of $v$ with start object $x_1$ if $v$ is non-empty, or $\Sp^{n+3}$ otherwise.
		\item The $2$-dimensional moduli space $\M(x_1,x_0)=\varepsilon$.
		\item No other $0$- or $1$-dimensional moduli spaces are non-empty.
	\end{enumerate}
\end{definition}

The score of an $\varepsilon$-Baues--Hennes flow category is the sub-tree of
\begin{center}
\begin{tikzpicture}
\score{0.6}{6.3}
\hookbet{1}{$s_0$}{$r_{-1}$};
\hookmix{1.8};
\hookb{2.6}{$r_0$};
\hookteb{4.2}{$s_{1}$}{$r_{1}$};
\draw (1,0.6) -- (0.8,0.9);
\draw (5,0.6) -- (5.2,0.9);
\draw (2.6,0) -- (4.2,1.8);
\node at (0.5,0.9) {$\cdots$};
\node at (5.8,0.9) {$\cdots$};
\node at (3.6,0.8) {$\varepsilon$};
\end{tikzpicture}
\end{center}
obtained by removing the $\varepsilon$-edge.

\begin{remark}\label{rem:confusion}
We address a potential source of confusion. Consider a central word $w=\bar{u}tv=u_k\cdots u_1 t v_1\cdots v_l$. Above, we assigned a specific score diagram to the flow category of $w$ and a specific score diagram to the flow category of $u$ and of $v$ as basic words. Note that (unless $u$ is one letter long) $\bar{u}$ is not basic. We stress that, while the specified diagram for $v$ is precisely a subdiagram of the specified diagram for $w$, the specified diagram for $u$ only appears as a subdiagram after reflecting left-to-right. In general, the effect of left-right reflection on a score diagram does give an alternative score diagram of a flow category. So some score diagram for $u$ does appear as a subdiagram of the score diagram of $w$, just not precisely the standard one for a basic word. Similar considerations apply for $\varepsilon$-words.
\end{remark}

\begin{definition}
	A \emph{cyclic word} is a pair $(w,A)$, where $w=w_1\cdots w_{4k}$ is a basic word with $k\geq 1$ such that $w_1 = \xi$, and $A$ is an invertible $m\times m$ matrix over $\F_2$, the field with two elements, for some $m\geq 1$.
	
	A reduced framed flow category $\cC$ is called a \emph{cyclic Baues--Hennes flow category of the word $(w,A)$} if the following are satisfied.
	\begin{enumerate}
		\item The object set is given by $\Ob(\cC) = \{x_i^j\,|\, i=1,\ldots,4k, j=1,\ldots, m \}$.
		\item There exist $m$ disjoint subcategories $\cC_1,\ldots, \cC_m$ such that each $\cC_j$ is a Baues--Hennes flow category of $w_2\cdots w_{4k}$ with object set 
		\[
		\Ob(\cC_j)= \{ x_1^j,\ldots, x_{4k}^j\} 
		\]
		such that $x_1^j$  is the start object and $x_{4k}^j$ is the end object.
		\item For $i,j\in \{1,\ldots, m\}$ the $1$-dimensional moduli space 
		\[
			\M(x^i_1, x^j_{4k}) = \left\{ \begin{array}{cc}
				\eta & \mbox{if } A_{ij} = 1 \\
				\emptyset & \mbox{if } A_{ij} = 0
			\end{array}
			\right.
		\]
		\item No other $0$- or $1$-dimensional moduli spaces are non-empty.
	\end{enumerate}	
\end{definition}

\begin{example}
	Let $w= \xi ^2\eta_4$, and 
	\[
	A = \left( \begin{matrix}1\end{matrix} \right) \hspace{1cm} B = \left( \begin{matrix} 1 & 1 \\ 1 & 0 \end{matrix} \right)
	\]
	The scores of the Baues--Hennes flow categories of $(w,A)$ and $(w,B)$ are given by
	\begin{center}
	\begin{tikzpicture}
		\score{0.6}{7}
		\hookteb{1}{}{$4$};
		\node at (0.8,1.5) {$2$};
		\hookmix{1};
		\node at (2.7,0.9) {and};
		\hookteb{3.6}{}{$4$};
		\node at (3.4,1.5) {$2$};
		\hookteb{5.2}{$2$}{$4$};
		\hookmix{3.6};
		\hookx{4.4};
		\draw (3.6,1.8) -- (6,0.6);
	\end{tikzpicture}
	\end{center}
\end{example}


\begin{lemma}\label{lem:def_BH}
	Let $\cC, \cC'$ be Baues--Hennes flow categories of the same word $w$. Then $\cC$ is move equivalent to a (de-)suspension of $\cC'$.
\end{lemma}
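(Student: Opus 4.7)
The plan is as follows. First, since the lemma permits (de-)suspension, we may assume without loss of generality that $\cC$ and $\cC'$ share the same grading shift parameter $n$. The underlying object sets of a Baues--Hennes flow category of $w$ have cardinality $k+1$ with gradings dictated entirely by the letters $w_1,\dots,w_k$, so under this assumption the object sets of $\cC$ and $\cC'$ are naturally in bijection preserving gradings. Moreover, the specified $0$- and $1$-dimensional moduli spaces are determined by $w$: each is either a prescribed number of positively framed points or a single copy of $\eta$. Since positively framed points are unique up to sign and $\eta$ generates $\Omega_1^{fr} \cong \Z/2\Z$, a perturbation identifies the framed low-dimensional moduli spaces of $\cC$ with those of $\cC'$.

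Because all objects have gradings in the set $\{n,n+1,n+2,n+3\}$, every moduli space has dimension at most $2$. The only $2$-dimensional moduli spaces are the $\M(x_i,x_j)$ with $|x_i|=n+3$ and $|x_j|=n$. The boundary of each such $\M(x_i,x_j)$ is prescribed by the composition structure together with the already-matched $0/1$-dimensional data, so these boundaries are identical in $\cC$ and $\cC'$. Any two framed fillings of a fixed closed framed $1$-manifold by framed $2$-manifolds differ by an element of $\Omega_2^{fr}\cong \Z/2\Z$ generated by $\varepsilon$. Consequently, $\M_\cC(x_i,x_j)$ and $\M_{\cC'}(x_i,x_j)$ agree up to possibly adding one copy of $\varepsilon$.

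The crux is therefore to show that any $\varepsilon$-discrepancy in a single $2$-dimensional moduli space can be toggled by a sequence of moves without disturbing the prescribed $0$- and $1$-dimensional data. The extended Whitney trick alone cannot accomplish this, since a cobordism from $M$ to $M\sqcup\varepsilon$ rel boundary would require $\varepsilon$ to be null-cobordant. Instead, I would use Lemma \ref{lem:trick2} in the manner of Example \ref{exam:remove_epsilon}. Given a pair of consecutive objects $(a,b)$ in the word's linear chain with $|a|=|b|+1$ positioned between $x_i$ and $x_j$, Lemma \ref{lem:trick2} changes $\M(x_i,x_j)$ by $\varepsilon$ while adding $p$ copies of $\eta$ to a neighbouring $1$-dimensional moduli space, where $p=|\M(a,b)|$. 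Crucially, in the Baues--Hennes setting, every $0$-dimensional moduli space $\M(a,b)$ consists of $p$ points with $p$ a positive power of $2$, so the extra $\eta$'s appear in even multiplicity and cancel in pairs modulo $\Omega_1^{fr}\cong\Z/2\Z$, restoring the Baues--Hennes form while achieving exactly the desired single $\varepsilon$-flip.

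The main obstacle is combinatorial bookkeeping: one must verify that for each pair $(i,j)$ with a potential $\varepsilon$-discrepancy, a suitable local triangle of objects in $\cC$ exists on which to apply Lemma \ref{lem:trick2}, and that the collection of available toggles spans the full $\F_2$-space of possible $\varepsilon$-flips across all relevant $(i,j)$. This reduces to a short case-check on the alternating pattern $\ldots \xi \eta\, R\, \xi \eta\, R\ldots$ guaranteed by the definition of a basic word, showing that each $2$-dimensional moduli space is adjacent to an $\eta$-edge and an $R$- or $S$-edge to which Lemma \ref{lem:trick2} applies. After executing the requisite toggles to match $\cC$ with $\cC'$ at every $2$-dimensional moduli space, the two Baues--Hennes categories coincide up to perturbation and are thus move equivalent.
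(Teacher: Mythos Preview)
Your overall strategy matches the paper's: reduce to the same grading shift, note that the $0$- and $1$-dimensional moduli spaces are forced by the word, observe that the $2$-dimensional moduli spaces differ at most by an $\varepsilon$, and use Lemma~\ref{lem:trick2} to toggle these $\varepsilon$'s. However, your description of \emph{how} to apply Lemma~\ref{lem:trick2} is incorrect, and the error is not just cosmetic.

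You propose applying Lemma~\ref{lem:trick2} to a pair of \emph{consecutive} objects $(a,b)$ in the word chain with $|a|=|b|+1$, i.e.\ to an $R$-, $S$-, or $T$-edge. But trick2 applied to such a pair only alters moduli spaces of the form $\M(a,\cdot)$ and $\M(\cdot,b)$; it does not change $\M(x_i,x_j)$ unless $a=x_i$ or $b=x_j$. Even when $a=x_i$, the $\varepsilon$-flip lands in $\M(x_i,d)$ where $d$ is the $\eta$-neighbour of $b$ --- a single, specific level-$0$ object, not an arbitrary $x_j$. Concretely, for the word $^2\eta_4\xi^2\eta_4$ there are four $2$-dimensional moduli spaces $\M(x_1,x_3)$, $\M(x_1,x_7)$, $\M(x_5,x_3)$, $\M(x_5,x_7)$, but your toggles only reach three of them; $\M(x_1,x_7)$ cannot be flipped independently. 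Your claim that the toggles span the full $\F_2$-space therefore fails. (Your side remark that $p=|\M(a,b)|$ copies of $\eta$ appear is also a misreading of Example~\ref{exam:remove_epsilon}: there the pair for trick2 is $(a,b)$ with $\M(a,b)=\emptyset$, and the $p$ extra $\eta$'s come from $\M(b,c)$.)

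The paper's fix is simple but essential: apply Lemma~\ref{lem:trick2} to the pair $(x_i,x_k)$ where $x_k$ is at grading $n+2$ and is the $\eta$-word-neighbour of $x_j$ (or dually to $(x_k,x_j)$ with $x_k$ the $\xi$-neighbour of $x_i$). Here $x_i$ and $x_k$ are \emph{not} in general adjacent in the word. This flips exactly $\M(x_i,x_j)$, giving an independent toggle for every pair. Such an $x_k$ always exists unless $(x_i,x_j)$ is the prescribed $\varepsilon$-pair of an $\varepsilon$-word, where no flip is needed. The paper also notes that for central words a side effect of $t$ copies of $\eta$ may appear (and cancel since $t$ is even), and that for cyclic words one should take $|x_k|=|x_i|-1$ to avoid simultaneously flipping several $\varepsilon$'s through the matrix $A$ --- a subtlety you do not address.
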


\begin{proof}
	All $0$- and $1$-dimensional moduli spaces are determined by the word, and the only flexibility left is in the $2$-dimensional moduli spaces. Assume we have two objects $x_i,x_j$ with $|x_i|=|x_j|+3$. Then $\partial \M(x_i,x_j)$ is either empty or an even number of non-trivially framed circles.
	
	If $\partial \M(x_i,x_j)$ is empty, we either are in the case of an $\varepsilon$-word and $\M(x_i,x_j)$ is a pre-described $\varepsilon$, or there exists an object $x_k$ with either $\M(x_i,x_k)=\eta$ or $\M(x_k,x_j)=\eta$.
	In the latter case we can use Lemma \ref{lem:trick2} to create an extra $\varepsilon$ in $\M(x_i,x_j)$. Note that if $x_k$ were a starting object in one of the two subcategories of a central Baues--Hennes flow category, we also create an even number of $\eta$'s in another moduli space $\M(x_0,x_j)$ or $\M(x_i,x_1)$, but these can be removed with an extended Whitney trick.
	If the word is cyclic, we choose $x_k$ with $|x_k|=|x_i|-1$ to avoid creating several $\varepsilon$ in different moduli spaces.
	
	The case where $\partial \M(x_i,x_j)$ is non-empty is treated in the same way as the latter case above. Recall from Subsection \ref{sec:framed_manifolds} that we can assume each component of $\partial \M(x_i,x_j)$ to be a framed cylinder, and two framed cylinders can be made framed cobordant by adding a non-trivially framed torus.
	
	By the discussion in Section \ref{sec:flow_framed_categories}, the two flow categories agree after finitely many moves.
\end{proof}

\begin{definition}
	Let $w$ be a basic word, a central word or an $\varepsilon$-word, and $n\in \Z$. We write $\B(w,n)$ for a Baues--Hennes flow category of $w$ whose objects have degrees between $n$ and $n+3$. If $(w,A)$ is a cyclic word, we write $\B((w,A),n)$ for a Baues--Hennes flow category of $(w,A)$  whose objects have degrees between $n$ and $n+3$. We may omit the $n$ from the notation.
\end{definition}

Some Baues-Hennes categories of $\varepsilon$-words can be further reduced to combinations of simpler categories with the homotopy type of Moore spaces, as the next two examples show.

\begin{example}\label{exam:no2eps2}
	Let $\cC$ be the framed flow category with objects $a,b,c,d$ such that $|a|=|b|+1=|c|+2=|d|+3$. Let $\M(a,b)=\{A_1,A_2\}$ and $\M(c,d)=\{C_1,C_2\}$ consist of two positively framed points each, and let $\M(b,c)=\{P,M\}$ contain a positively and a negatively framed point. We assume each moduli space $\M(a,c)$ and $\M(b,d)$ consist of two intervals (i.e.~has no closed components). For each of $i=1,2$ we specify that $(P,A_i)$, $(M,A_i)$ lie in the same connected component of $\M(a,c)$, and that $(C_i,P)$, $(C_i,M)$ lie in the same connected component of $\M(b,d)$. It follows that
	\begin{center}
		\begin{tikzpicture}
			\node at (-0.2,0) {$\partial\M(a,d) =$};
			\dotd{1};
			\dotd{2};
			\dotd{3};
			\dotd{4};
			\dotd{5};
			\dotd{6};
			\dotd{7};
			\dotd{8};
			\draw (1,0) to[out=45,in=135] (2,0);
			\draw (1,0) to[out=-45,in=-135] (2,0);
			\draw (3,0) to[out=45,in=135] (4,0);
			\draw (3,0) to[out=-45,in=-135] (4,0);
			\draw (5,0) to[out=45,in=135] (6,0);
			\draw (5,0) to[out=-45,in=-135] (6,0);
			\draw (7,0) to[out=45,in=135] (8,0);
			\draw (7,0) to[out=-45,in=-135] (8,0);
		\end{tikzpicture}
	\end{center}
	with the endpoints of the $2$-gons given by $(C_i,P,A_j)$ and $(C_i,M,A_j)$ for $i,j\in \{1,2\}$. If we choose the framing of the intervals in $\M(a,c)$ and $\M(b,d)$ identically, we can choose $\M(a,d)$ to be four discs, all trivially framed.
	
	Now perform a Whitney trick on $\M(b,c)$. This turns $\M(a,c)$ and $\M(b,d)$ into two circles each, and $\M(a,d)$ into four cylinders, where each disc is turned into a cylinder.
	\begin{center}
		\begin{tikzpicture}
			\node at (0,0) {$\M(a,d) =$};
			\draw (1.5,-1) ellipse (0.5cm and 0.2cm);
			\draw (1.5,1) ellipse (0.5cm and 0.2cm);
			\draw (1,-1) -- (1,1);
			\draw (2,-1) -- (2,1);
			\draw (3.5,-1) ellipse (0.5cm and 0.2cm);
			\draw (3.5,1) ellipse (0.5cm and 0.2cm);
			\draw (3,-1) -- (3,1);
			\draw (4,-1) -- (4,1);
			\draw (5.5,-1) ellipse (0.5cm and 0.2cm);
			\draw (5.5,1) ellipse (0.5cm and 0.2cm);
			\draw (5,-1) -- (5,1);
			\draw (6,-1) -- (6,1);
			\draw (7.5,-1) ellipse (0.5cm and 0.2cm);
			\draw (7.5,1) ellipse (0.5cm and 0.2cm);
			\draw (7,-1) -- (7,1);
			\draw (8,-1) -- (8,1);
			\node at (1.5,-1.4) {$C_1 \times \xi_1$};
			\node at (1.5,1.4) {$\eta_1 \times A_1$};
			\node at (3.5,-1.4) {$C_1 \times \xi_2$};
			\node at (3.5,1.4) {$\eta_1 \times A_2$};
			\node at (5.5,-1.4) {$C_2 \times \xi_1$};
			\node at (5.5,1.4) {$\eta_2 \times A_1$};
			\node at (7.5,-1.4) {$C_2 \times \xi_2$};
			\node at (7.5,1.4) {$\eta_2 \times A_2$};
		\end{tikzpicture}
	\end{center}
	where we write $\M(a,c) = \xi_1\cup \xi_2$ and $\M(b,d) = \eta_1 \cup \eta_2$. Note that there are two possible Whitney tricks we could have performed to cancel $P$ against $M$ in $\M(b,c)$, and these correspond to the two different ways of framing an arc that is the nullbordism of $P\sqcup M$ used in the trick. The different choices result in the two circles in $\M(b,d)$ being either both non-trivially framed or both trivially framed.
	This forces the framing on the circles in $\M(a,c)$ to have the same framing as well, as witnessed by the framed cobordisms in $\M(a,d)$.
	
	We can now perform an extended Whitney trick in both $\M(a,c)$ and $\M(b,d)$ using a cylinder between $\xi_1$ and $\xi_2$, and a cylinder between $\eta_1$ and $\eta_2$. This adds four cylinders to $\M(a,d)$ which turns the new $\M(a,d)$ into a torus. With the same argument as in Example \ref{exam:Adem_relation2} this torus has a non-trivial framing on the longitude.
	If the framing of the circles is non-trivial, we thus get $\M(a,d) = \varepsilon$, but if the framing of the circles is trivial, so is the framing on the torus. As we can get both situations, we see that $\B(_2\varepsilon^2)$ is move equivalent to a disjoint union of Moore flow categories.
	
	Note that this argument works for any $\varepsilon$-Baues--Hennes flow category that contains the word $_2\varepsilon^2$.
\end{example}

\begin{example}\label{exam:Baues_style}
	Let $w= \xi_2\varepsilon v$, where $v$ is either empty or a basic word, and consider $\B(w)$. We have objects $a,c,d,a'$ with $|a|=|a'|=|c|+2=|d|+3$ and $\M(a,c)=\eta$, $\M(c,d) = \{C_1, C_2\}$ two positively framed points and $\M(a',d)=\varepsilon$. Depending on $v$ there may be further objects and moduli spaces.
	
	Note that $\M(a,d)$ consists of a cylinder between two non-trivially framed circles. If we slide $a'$ twice over $a$, we get two non-trivially framed circles in $\M(a',c)$, and two extra cylinders in $\M(a',d)$. Performing the extended Whitney trick on the two non-trivially framed circles in  $\M(a',c)$ turns this moduli space into the empty set, but adds (similarly to the previous example) an $\varepsilon$ to $\M(a',d)$. As we already have one $\varepsilon$ in $\M(a',d)$, we can do another extended Whitney trick to get this moduli space empty as well.
	
	Hence $\B(w)$ is move equivalent to $\B(_2\xi) \sqcup \cC$, where $\cC=\Sp$ or $\B(v)$.
\end{example}

To detect when we have an $\varepsilon$-word that can be further reduced, as in Examples \ref{exam:no2eps2} and \ref{exam:Baues_style}, we introduce the following notation, and the concept of a \emph{special} word.

\begin{definition}
	Let $\Xi$ be the set consisting of $\emptyset$ and all basic words that start with $\xi$. For $w\in \Xi$, define the \emph{$\sigma$-symbol of $w$} to be the sequence $\sigma(w)=(\sigma_1(w),\sigma_2(w),\ldots)$ with $\sigma_i(w)\in \Z\cup \{-\infty,\infty\}$ inductively defined as
	\[
		\sigma(w) = \left\{ \begin{array}{ll}
			(0,\ldots,0,\ldots) & \mbox{if }w=\emptyset\\
			(\infty,0,\ldots,0,\ldots) & \mbox{if }w=\xi \\
			(s,0,\ldots,0,\ldots) & \mbox{if }w = \xi^s \\
			(s,-\infty,0,\ldots,0,\ldots) & \mbox{if }w = \xi^s\eta \\
			(s,-r,\sigma_1(u),\sigma_2(u),\ldots)&\mbox{if } w = \xi^s\eta_ru\mbox{ with }u\in \Xi
		\end{array}
		\right.
	\]
	We also define the \emph{$\sigma^\ast$-symbol of $w$} to be the sequence $\sigma^\ast(w)=(2\sigma_1(w),\sigma_2(w),\ldots)$.
	 
	Similarly, let $\Eta$ be the set consisting of $\emptyset$ and all basic words that start with $\eta$. For $w\in \Eta$ define the \emph{$\rho$-symbol of $w$} to be the sequence $\rho(w)=(\rho_1(w),\rho_2(w),\ldots)$ with $\rho_i(w)\in \Z\cup \{-\infty,\infty\}$ inductively defined as
	\[
	\rho(w) = \left\{ \begin{array}{ll}
	(0,\ldots,0,\ldots) & \mbox{if }w=\emptyset\\
	(\infty,0,\ldots,0,\ldots) & \mbox{if }w=\eta \\
	(r,0,\ldots,0,\ldots) & \mbox{if }w = \eta_r \\
	(r,-\infty,0,\ldots,0,\ldots) & \mbox{if }w = \eta_r\xi \\
	(r,-s,\rho_1(u),\rho_2(u),\ldots)&\mbox{if } w = \eta_r\xi^su\mbox{ with }u\in \Eta
	\end{array}
	\right.
	\]
	We also define the \emph{$\rho^\ast$-symbol of $w$} to be the sequence $\rho^\ast(w)=(2\rho_1(w),\rho_2(w),\ldots)$.
\end{definition}

Note that the $\sigma$-symbol and the $\sigma^\ast$-symbol differ at most in the first entry, and similarly the $\rho$-symbol differs from $\rho^\ast$ in at most the first entry.

The set $\Z\cup\{-\infty,\infty\}$ has a total order, which induces a total order on the set of sequences in $\Z\cup\{-\infty,\infty\}$ by the lexicographical order. In particular, statements such as $\sigma(w) < \sigma(w')$ for words $w,w'\in \Xi$ are well defined.

For the next definition, note that if $w=\bar{u}\varepsilon v$ is an $\varepsilon$-word, then $\xi v\in \Xi$, and $\eta u \in \Eta$.

\begin{definition}\label{def:special_word}
	A word $w$ in $\mathcal{A}$ is called \emph{special}, if one of the following is satisfied.
	\begin{enumerate}
		\item The word $w$ is a basic word such that the corresponding Baues--Hennes flow category $\B(w)$ has objects $a,d\in \Ob(\B(w))$ such that $|a|=|d|+3$.
		\item The word $w$ is a central word such that the corresponding Baues--Hennes flow category $\B(w)$ has objects $a,d\in \Ob(\B(w))$ such that $|a|=|d|+3$.
		\item The word $w=\bar{u}\varepsilon v$ is an $\varepsilon$-word, and one of the three following conditions is satisfied.
		\begin{enumerate}
			\item We have $\sigma_1(\xi v) \geq 4$ and $\rho_1(\eta u) \geq 4$.
			\item We have $\sigma_1(\xi v) = 2$, $\rho_1(\eta u)\geq 4$, and $\rho^\ast(v')< \rho(\eta u)$, where $v'\in \Eta$ satisfies $v = \,\!^2v'$.
			\item We have $\sigma_1(\xi v) \geq 4$, $\rho_1(\eta u) = 2$, and $\sigma^\ast(u') < \sigma(\xi v)$, where $u'\in \Xi$ satisfies $u = \,\!_2u'$.
		\end{enumerate}
	\end{enumerate}
\end{definition}

The extra condition for basic and central words to be special is merely there to avoid repetition with Chang and Moore flow categories.

\begin{example}
	If an $\varepsilon$-word contains the sub-word $_2\varepsilon^2$, it is not special. The words $\varepsilon v$ are special if and only if $v\not = \,\!^2\eta$. Indeed, note that $\rho_1(\eta u) = \infty \geq 4$, so unless $v$ starts with $^2$, it is special. If $v=\,\!^2v'$ for some $v'$, we only need to check whether $\rho^\ast(v') < (\infty,0,\ldots)$, which is the case unless $v'=\eta$.
	Similarly, $\bar{u}\varepsilon$ is special if and only if $u\not=\,\!_2\xi$.
\end{example}

The reader may want to convince herself that an $\varepsilon$-word is special in the sense of Definition \ref{def:special_word} if and only if it is special in the sense of \cite{Baues}.

\begin{definition}
	Let $w=\xi^{s_1}\eta_{r_1}\cdots \xi^{s_p}\eta_{r_p}$ be a basic word. A \emph{cyclic permutation of $w$} is a basic word $w'= \xi^{s_i}\eta_{r_i}\cdots \xi^{s_p}\eta_{r_p}\xi^{s_1}\eta_{r_1}\cdots \xi^{s_{i-1}}\eta_{r_{i-1}}$ for some $i=1,\ldots,p$.
		
	Two cyclic words $(w,A)$ and $(w',A')$ are \emph{equivalent} if $w'$ is a cyclic permutation of $w$, and there is an invertible matrix $B$ over $\F_2$ with $A=B^{-1}A'B$.
	
	An equivalence class $[w,A]$ of a cyclic word $(w,A)$ is called a \emph{special cyclic word}, if $w$ is not of the form $w=w'w'\cdots w'$ where the right hand side is a $j$-fold power of a basic word $w'$ with $j>1$, and $A$ is indecomposable, that is, $A$ is not similar to a matrix $\left(\begin{matrix} A_1 & 0 \\ 0 & A_2\end{matrix}\right)$ over $\F_2$.
\end{definition}

As we shall see, equivalent cyclic words have move equivalent Baues--Hennes flow categories. Also, if $w=(w')^j$ for some $j>1$, we can express the Baues--Hennes flow category of $(w,A)$ as a Baues--Hennes flow category of a cyclic word $(w',A')$ for some larger matrix $A'$. If $A$ is decomposable, so will be its Baues--Hennes flow category.

\begin{definition}
	A framed flow category $\cC$ of homological width $3$ is said to be in \emph{Baues--Hennes form} if it is a disjoint union of elementary Moore flow categories, Chang flow categories and Baues--Hennes flow categories of special words.
\end{definition}

\begin{remark}\label{rem:promise}
Recall that our Baues--Hennes flow categories are framed, so applying the Cohen--Jones--Segal construction to them gives rise to stable homotopy types. In fact, these will have the same stable homotopy type as the spaces considered in \cite{BauHen}, but we defer the proof of this to Proposition~\ref{prop:refereeaddition}.
\end{remark}

\section{Partitions of flow categories}
\label{sec:partitions}

Let $\cC$ be a width-$3$ framed flow category in Chang form. For simplicity we assume that $|x|\in \{0,1,2,3\}$ for all objects $x\in \Ob(\cC)$. The purpose Sections \ref{sec:partitions} and \ref{sec:main_theorem} is to improve the flow category $\cC$ by move equivalence so that its score agrees with the score of a flow category which is the disjoint union of Baues--Hennes flow categories.

In this section we will set up the language and notation, and in Section \ref{sec:main_theorem} we will use that setup to prove the main theorem of the article.

\subsection{Parts and partitions of a flow category}\label{susbec:parts}

\begin{definition}
	For a framed flow category in Chang form, the length 2 edges connected to vertices in level 0 of the score are called \emph{$\eta$-edges}. The length 2 edges connected to vertices in level 1 of the score are called \emph{$\xi$-edges}.
	
	By convention, when a $\xi$ edge exists, we say it goes \emph{out} of a level 3 object and goes \emph{in} to a level 1 object.
\end{definition}

Since the flow category is in Chang form the score has the property that every vertex in level $2$ or $0$ is connected to at most one $\eta$-edge. The $\xi$-edges however have no restrictions.

Recall that a Baues--Hennes flow category $\B$ of a basic word has a distinguished start object and end object. We write $\alpha(\B)$ for the start object and $\omega(\B)$ for the end object. Similarly, we extend this notion for the sphere flow category $\Sp$ by $\alpha(\Sp)=\omega(\Sp)=x$, where $x$ is the only object of $\Sp$.

\begin{definition}
	Let $\cC$ be a framed flow category in Chang form. A subcategory $\cP$ of $\cC$ is called a \emph{part}, if $\cP$ is either a Baues--Hennes flow category for a basic word $w=w(\cP)$, or a sphere flow category $\Sp$. Furthermore, a part $\cP$ must satisfy the following properties.
	\begin{enumerate}
		\item $\M_\cP(x,y) = \M_\cC(x,y)$ for all objects $x,y\in \Ob(\cP)$, unless $x=\alpha(\cP)$ with $|x|=3$ and $y=\omega(\cP)$ with $|y|=1$.
		\item $\M_\cC(x,y) = \emptyset$ for all objects $x\in \Ob(\cP)$, $y\in \Ob(\cC)\setminus \Ob(\cP)$ with $|x|-|y|\leq2$, unless $x=\alpha(\cP)$ and $|x|=3$ or $2$.
		\item $\M_\cC(x,y) = \emptyset$ for all objects $x\in \Ob(\cC)\setminus \Ob(\cP)$, $y\in \Ob(\cP)$ with $|x|-|y|\leq 2$, unless $y=\omega(\cP)$ and $|y|=1$, or $y=\alpha(\cP)$, $|y|=1$ and $|x|=2$.
	\end{enumerate}
	We set $w(\Sp)=\emptyset$.
\end{definition}

\begin{remark}
	Assume $\cP$ is a part of a flow category $\cC$ in Chang form. If $x\in \Ob(\cP)$ is neither the start nor the end object of $\cP$, the $0$- and $1$-dimensional moduli spaces $\M_\cC(x,y)$ and $\M_\cC(u,x)$ with $y,u\in \Ob(\cC)$ are completely determined by the word $w(\cP)$. For the end object this is true unless $|\omega(\cP)|=1$. Let us consider the various cases for a part.
	
	If $|\alpha(\cP)|=3$ we can have the following situations.
	\begin{center}
	\begin{tikzpicture}
	\score{0.6}{12.6}
	\hookteb{0.6}{}{};
	\hookteb{2.2}{}{};
	\draw (1.4,0.6) -- (1.6,0.9);
	\draw (2,1.5) -- (2.2,1.8);
	\shookr{0.3}{1.6};
	\shookr{3}{0.6};
	\node at (1.1,1.6) {$\alpha(\cP)$};
	\node at (3.6,0.4) {$\omega(\cP)$};
	\node at (1.8,1.2) {$\cdot$};
	\node at (1.7,1.07) {$\cdot$};
	\node at (1.9,1.33) {$\cdot$};
	\hookteb{4.2}{}{};
	\hookmiet{5.8}{}{};
	\draw (5,0.6) -- (5.2,0.9);
	\draw (5.6,1.5) -- (5.8,1.8);
	\shookr{3.9}{1.6};
	\node at (4.4,1.6) {$\alpha$};
	\node at (6.9,0.2) {$\omega$};
	\node at (5.4,1.2) {$\cdot$};
	\node at (5.3,1.07) {$\cdot$};
	\node at (5.5,1.33) {$\cdot$};
	\hookteb{7.8}{}{};
	\hookt{9.4}{}{};
	\draw (8.6,0.6) -- (8.8,0.9);
	\draw (9.2,1.5) -- (9.4,1.8);
	\shookr{7.5}{1.6};
	\node at (8,1.6) {$\alpha$};
	\node at (9.7,1.4) {$\omega$};
	\node at (9,1.2) {$\cdot$};
	\node at (8.9,1.07) {$\cdot$};
	\node at (9.1,1.33) {$\cdot$};
	\hookteb{10.6}{}{};
	\dota{12.2};
	\draw (11.4,0.6) -- (11.6,0.9);
	\draw (12,1.5) -- (12.2,1.8);
	\shookr{10.3}{1.6};
	\node at (11.8,1.2) {$\cdot$};
	\node at (11.7,1.07) {$\cdot$};
	\node at (11.9,1.33) {$\cdot$};
	\node at (10.8,1.6) {$\alpha$};
	\node at (12.4,1.6) {$\omega$};
	\end{tikzpicture}
	\end{center}
	Note that there can be several $\xi$-edges going out of $\alpha(\cP)$, including one into $\omega(\cP)$, provided that $|\omega(\cP)|=1$. If $|\omega(\cP)|=0$, there is no $0$-dimensional moduli space $\M_\cC(c,\omega(\cP))\not=\emptyset$ with $c\in \Ob(\cC)$; if $|\omega(\cP)|=2$, there is no $\eta$-edge coming out of it, and if $|\omega(\cP)|=3$, there is no other $\xi$-edge coming out of it.
	
	If $|\alpha(\cP)|=2$, there is no $a\in \Ob(\cC)$ with $\M_\cC(a,\alpha(\cP))\not=\emptyset$. It is possible there is $c\in \Ob(\cC)\setminus \Ob(\cP)$ with $\M_\cC(\alpha(\cP),c)\neq\emptyset$ for some $|c|=1$, however it is not possible for $|c|=0$ because $\cC$ is in Chang form, and if there is an $\eta$ edge in coming out of $\alpha(\cP)$, we would have $c\in \Ob(\cP)$. The conditions on $\omega(\cP)$ are as in the above case.
	
	If $|\alpha(\cP)|=1$, there is no $d\in \Ob(\cC)$ with $\M_\cC(\alpha(\cP),d)\not=\emptyset$, but there could be $b\in \Ob(\cC)\setminus \Ob(\cP)$ with $\M_\cC(b,\alpha(\cP))$ a finite number of points.
	
	If $|\alpha(\cP)|=0$ there is no $\eta$-edge coming in. There could be $2$-dimensional moduli spaces $\M_\cC(a,\alpha(\cP))$, but we will only begin to worry about these in Section \ref{sec:fullBH} and they are not relevant right now.
	
	Parts are not necessarily maximal with respect to their word $w(\cP)$. In fact, we can remove a $\xi$-letter and divide a part into two parts. But this is not allowed with any of the other letters.   
\end{remark}

\begin{definition}
	Let $\cC$ be a framed flow category in Chang form. A \emph{partition of $\cC$} is a collection $\mathcal{P}=\{\cP_i\}_{i=1}^k$ of parts $\cP_i$ such that
	\[
		\Ob(\cC) = \coprod_{i=1}^k \Ob(\cP_i).
	\]
\end{definition}

\subsection{The base partition of the category}

Partitions do exist, and there is in fact a basic method to get them for any framed flow category in Chang form, as we now explain.

\begin{definition}
	Define a partition $\mathcal{T}$ of $\cC$, called the \emph{base partition}, as follows. In $\Gamma_2$, consider the connected components of the graph obtained by removing all $\xi$ edges and all $t$ edges. Each such connected component gives rise to a Baues--Hennes word and defines a part by taking the set of objects corresponding to the vertices of the connected component, together with the moduli spaces coming from the edges.
\end{definition}
Since we assume that $\cC$ is in Chang form $\mathcal{T}$ is indeed a partition.

There are 10 possible types for parts in $\mathcal{T}$ and their scores are illustrated below as $(a)$ -- $(j)$.
\begin{center}
\begin{tikzpicture}
\score{0.6cm}{12cm}
\dota{0.3};
\hookt{1.3}{};
\hookmiet{2.1}{};
\hookteb{3.9}{}{};
\hookmibe{5.6}{};
\dotc{7};
\hookb{8}{};
\hookmie{9};
\dotb{10.7};
\dotd{11.7};

\node at (0.3,-0.4) {$(a)$};
\node at (1.3,-0.4) {$(b)$};
\node at (2.5,-0.4) {$(c)$};
\node at (4.3,-0.4) {$(d)$};
\node at (6,-0.4) {$(e)$};
\node at (7,-0.4) {$(f)$};
\node at (8,-0.4) {$(g)$};
\node at (9.4,-0.4) {$(h)$};
\node at (10.7,-0.4) {$(i)$};
\node at (11.7,-0.4) {$(j)$};

\end{tikzpicture}
\end{center}

\subsection{Pre-orders on the partition}\label{sec:preorders}

In the proof of Theorem \ref{thm:chang} we moved our framed flow category into Chang form by means of an induction. The induction was controlled by first defining an appropriate pre-order on a set consisting of certain important features of the score diagram, and then  controlling the induction using maximal elements in this pre-order. That proof can be thought of as a vastly simplified version of the induction we intend to perform in Section \ref{sec:main_theorem}.

In this spirit, we will now define certain subsets -- called $\mathcal{P}_3$ and $\overline{\mathcal{P}}_1$ -- of a general partition of a flow category $\mathscr{C}$ in Chang form. We then proceed to describe pre-orders on those subsets.

\begin{example}
	Before we give the definition of $\mathcal{P}_3$ and $\overline{\mathcal{P}}_1$ for a general $\mathcal{P}$, as a rough idea for the general case, we mention that for the base partition $\mathcal{T}$, that $\mathcal{T}_3$ is made up of those parts with score $(a)$, $(b)$, $(c)$ and $(d)$, while $\overline{\mathcal{T}}_1$ will be made up of those parts with score $(d)$, $(e)$, $(f)$ and $(g)$.
\end{example}

In full generality, we now define two ways of dividing up the parts in a partition.

\begin{definition}
	Let $\mathcal{P}$ be a partition of a framed flow category $\cC$ in Chang form. Then for $i\in \{0,1,2,3\}$ let
	\[
		\mathcal{P}_i = \{\cP\in \mathcal{P}\,|\, |\alpha(\cP)| = i \}
	\]
	and let
	\[
		\overline{\mathcal{P}}_i = \{\cP\in \mathcal{P}\,|\, |\omega(\cP)| = i \}.
	\]
	
	Recalling that we use the lexicographical order to compare sequences $\sigma(w)$ for $w\in \Xi$, we define a pre-order $\preccurlyeq$ on $\mathcal{P}_3$ by $\cP_1 \preccurlyeq \cP_2$ if $\sigma(\xi w(\cP_1)) \leq \sigma(\xi w(\cP_2))$. We write $\cP_1 \prec \cP_2$, if $\cP_1 \preccurlyeq \cP_2$, but $\cP_2 \not\preccurlyeq \cP_1$.
\end{definition}

Defining the pre-order on $\overline{\mathcal{P}}_1$ will be considerably more delicate.

\begin{definition}
	Let $\overline{\Xi}$ be the set of basic words ending in $\xi$. For $w\in \overline{\Xi}$ define the \emph{$\bar{\sigma}$-symbol of $w$} to be the sequence $\bar{\sigma}(w)=(\bar{\sigma}_1(w),\ldots)$ with $\bar{\sigma}_i(w)\in \Q$ inductively as
	\[
	\bar{\sigma}(w) = \left\{
		\begin{array}{ll}
		(0,0,\ldots) & \mbox{if }\bar{w}=\xi \\
		(1/q,1,0,\ldots) & \mbox{if }\bar{w}=\xi_q \\
		(1/q,0,\ldots) & \mbox{if }\bar{w}=\xi_q\eta \\
		(1/q,-1/p,-1,0,\ldots) & \mbox{if }\bar{w}=\xi_q\eta^p \\
		(1/q,-1/p,\bar{\sigma}_1(u),\ldots) & \mbox{if }\bar{w}=\xi_q\eta^p \bar{u}\mbox{ with }u\in \overline{\Xi}
		\end{array}
	\right.
	\]
	The usual order on $\Q$ induces a lexicographic order on $\overline{\Xi}$. In particular $\bar{\sigma}(w_1)=\bar{\sigma}(w_2)$ implies $w_1=w_2$.
\end{definition}

The order on $\overline{\Xi}$ could be used to define a pre-order on $\overline{\mathcal{P}}_1$, but because of potential central words we will need to refine this further.

\begin{definition}
	Let $\mathcal{P}$ be a partition of a framed flow category $\cC$ in Chang form, and $i,j\in \{0,1,2,3\}$. Then let
	\[
		\mathcal{P}_{i,j}:=\mathcal{P}_i\cap \overline{\mathcal{P}}_j.
	\]
	For $\cP\in \mathcal{P}_{1,1}$ define 
	\[
	\tau(\cP) = \left\{
		\begin{array}{cl}
			\infty & \mbox{if }\M(b,\alpha(\cP))=\emptyset \mbox{ for all }b\in \Ob(\cC), |b|=2 \\
			p & \mbox{if there exists }b\in \Ob(\cC), |b|=2, \M(b,\alpha(\cP))\mbox{ contains }p\mbox{ points}
		\end{array}
	\right.
	\]

	For $\cP\in \mathcal{P}_{2,1}$ define 
	\[
	\tau(\cP) = \left\{
	\begin{array}{cl}
	-\infty & \mbox{if }\M(\alpha(\cP),c)=\emptyset \mbox{ for all }c\in \Ob(\cC), |c|=1 \\
	-p & \mbox{if there exists }c\in \Ob(\cC), |c|=1, \M(\alpha(\cP),c)\mbox{ contains }p\mbox{ points}
	\end{array}
	\right.
	\]
\end{definition}

\begin{definition}
Let $\mathcal{P}$ be a partition of a framed flow category $\cC$ in Chang form. Define
\begin{align*}
 \mathcal{P}_2^T&=\{\cP\in \mathcal{P}_2\,|\,\mbox{There exists }c\in \Ob(\cC) \mbox{ with }|c|=1, \M(\alpha(\cP),c)\not=\emptyset\} \\
 \mathcal{P}_1^T&=\{\cP \in \mathcal{P}_1\,|\,\mbox{There exists }b\in \Ob(\cC) \mbox{ with }|b|=2, \M(b,\alpha(\cP))\not=\emptyset\}\} \\
\end{align*}
\end{definition}

\begin{lemma}
	Let $\mathcal{P}$ be a partition of a framed flow category $\cC$ in Chang form.
	\begin{enumerate}
		\item Let $\cP\in \mathcal{P}_2^T$. Then there exists a unique $\cP'\in \mathcal{P}_1^T$ with 
		\[\M_\cC(\alpha(\cP),\alpha(\cP'))\not=\emptyset.\]
		\item Let $\cP'\in \mathcal{P}_1^T$. Then there exists a unique $\cP\in \mathcal{P}_2^T$ with 
		\[\M_\cC(\alpha(\cP),\alpha(\cP'))\not=\emptyset.\]
	\end{enumerate}
	We call the two parts $\cP$ and $\cP'$ \emph{dual} to each other.
\end{lemma}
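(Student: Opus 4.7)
The plan is to use primary Smith normal form (built into Chang form) to pin down the unique level-$1$ endpoint of a length-$1$ edge out of $\alpha(\cP)$, and then identify this endpoint as $\alpha(\cP')$ for a unique $\cP'\in\mathcal{P}_1^T$. For (1), fix $\cP\in\mathcal{P}_2^T$ and let $c\in\Ob(\cC)$ at level $1$ satisfy $\M(\alpha(\cP),c)\neq\emptyset$. Since $\cC$ is in primary Smith normal form, this non-empty $0$-dimensional moduli space is a non-zero entry of $d_1\colon C_2\to C_1$, so $\alpha(\cP)\in U_2$, and by diagonality of $d_1|_{U_2}$ there is exactly one such $c\in U_1$. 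Let $\cP'$ be the unique part containing it.

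To show $c=\alpha(\cP')$, part condition~(3) already restricts $c\in\{\omega(\cP'),\alpha(\cP')\}$, so I only need to rule out $c=\omega(\cP')\neq\alpha(\cP')$. In that case $\cP'$'s word has a last letter $w_k$ whose target is at level $1$; within the level range $\{0,1,2,3\}$ this forces $w_k$ to be one of $R$ at levels $(2,1)$, $\eta$ at $(3,1)$, or $S$ at $(0,1)$. If $w_k=S$ at $(0,1)$ then $d_0(\omega)=\pm s\cdot x_k$ is non-zero (with $s$ a prime power) and $d_1(\alpha(\cP))=\pm t\cdot\omega$ for a prime power $t$, so $d_0\circ d_1(\alpha(\cP))=\pm st\cdot x_k\neq 0$, violating $d^2=0$. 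If $w_k=\eta$ at $(3,1)$ then no length-$1$ edge touches $\omega$ internally (the $\eta$-letter is length $2$) and condition~(2) blocks outgoing edges from $\omega$, so $d_0(\omega)=0$; primary Smith normal form then forces $\omega\in V_1$, contradicting $\omega\in\operatorname{im}(d_1|_{U_2})\subseteq U_1$. If $w_k=R$ at $(2,1)$ the internal $R$-letter already makes $\omega\in U_1$ the unique diagonal partner of $x_k\in U_2$, so $\alpha(\cP)\in U_2$ cannot also pair with $\omega$. Therefore $c=\alpha(\cP')$, so $\cP'\in\mathcal{P}_1$, and the edge from $\alpha(\cP)$ witnesses $\cP'\in\mathcal{P}_1^T$; uniqueness of $\cP'$ follows from that of $c$.

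For (2) the argument is simpler: given $\cP'\in\mathcal{P}_1^T$ pick $b\in\Ob(\cC)$ at level $2$ with $\M(b,\alpha(\cP'))\neq\emptyset$. Primary Smith normal form yields a unique such $b$ (the diagonal partner of $\alpha(\cP')\in U_1$ under $d_1|_{U_2}$), and letting $\cP$ be the part containing $b$, part condition~(2) immediately forces $b=\alpha(\cP)$, since $b\in\Ob(\cP)$, $|b|=2$, and $\alpha(\cP')$ lies outside $\cP$. Then $\cP\in\mathcal{P}_2$ and the existence of the target $\alpha(\cP')$ gives $\cP\in\mathcal{P}_2^T$; uniqueness follows from that of $b$. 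The main obstacle is the three-way case analysis in (1): showing that no permitted ending letter of $\cP'$'s word is compatible with the hypothesised diagonal edge from $\alpha(\cP)$ into $\omega(\cP')$, which relies on the interplay of $d^2=0$, the $U/V$-decomposition from primary Smith normal form, and the rigid level assignments of each letter type within $\{0,1,2,3\}$.
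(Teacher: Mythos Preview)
Your approach — reducing to the diagonality of $d_1$ in primary Smith normal form and then checking that the unique level-$1$ target must be a start object — agrees with the paper's, though the paper simply says the result is ``immediate from the fact that $\cC$ is in primary Smith normal form'' and leaves the part-structure verification implicit.

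Your elaboration in (1), however, contains an error in the case enumeration. When $c=\omega(\cP')\neq\alpha(\cP')$ with $|c|=1$, the last letter $w_k$ of $w(\cP')$ must satisfy $|x_{k+1}|=1$. From the Baues--Hennes level assignments there is exactly \emph{one} letter type with this property (the one running between levels $0$ and $1$); neither $\eta$ nor the other length-$1$ letter you list has target at level $1$ — $\eta$ always ends at level $0$, and the remaining length-$1$ letter at the top of the stave ends at level $2$. Your $d_0\circ d_1\neq 0$ argument for the one genuine case is correct and already suffices, so the proof goes through once the two extraneous cases are deleted. (Incidentally, in your $\eta$ case the inclusion $\operatorname{im}(d_1|_{U_2})\subseteq U_1$ is backwards: since $d_{r-1}$ is injective on $U_r$ and $d^2=0$, the image of $d_r$ lies in $V_r$, not $U_r$.) Part (2) is fine.
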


\begin{proof}
	This is immediate from the fact that $\cC$ is in primary Smith normal form.
\end{proof}

If $\cP_1,\cP_2$ are in $\mathcal{P}_1^T$ (resp.\ $\mathcal{P}_2^T$), satisfying $w(\cP_1)=w(\cP_2)$ and $\tau(\cP_1)=\tau(\cP_2)$, then they have duals $\cP_1',\cP_2'$ in $\mathcal{P}_2^T$ (resp.\ $\mathcal{P}_1^T$). The pre-order to compare $\cP_1$ and $\cP_2$ is going to take the duals into account. Note that if $\cP'\in \mathcal{P}_1$, then $w(\cP')\in \Xi$, and if $\cP'\in \mathcal{P}_2$, then $w(\cP')\in \Eta$.

\begin{definition}
Let $\cC$ be a framed flow category in Chang form, and $\mathcal{P}$ be a partition. We define a pre-order $\trianglelefteq$ on $\overline{\mathcal{P}}_1$ as follows.

Let $\cP_1,\cP_2\in \overline{\mathcal{P}}_1$.  We write $\cP_1 \trianglelefteq \cP_2$ if one of the following is satisfied.
\begin{enumerate}
	\item We have $\bar{\sigma}(w(\cP_1)\xi) < \bar{\sigma}(w(\cP_2)\xi)$.
	\item We have $\bar{\sigma}(w(\cP_1)\xi) = \bar{\sigma}(w(\cP_2)\xi)$, and if $\cP_1, \cP_2\in \mathcal{P}_{1,1}\cup \mathcal{P}_{2,1}$ we have $\tau(\cP_2)=\infty$ or $\tau(\cP_1)=-\infty$ or $\tau(\cP_1)<\tau(\cP_2)$.
	\item We have $\cP_1,\cP_2\in \mathcal{P}_{1,1}$, $\bar{\sigma}(w(\cP_1)\xi)=\bar{\sigma}(w(\cP_2)\xi)$, $\tau(\cP_1)=\tau(\cP_2)\in \Z$ and $\rho(w(\cP_2'))\leq \rho(w(\cP_1'))$, where $\cP_1'$ is the dual of $\cP_1$ and $\cP_2'$ is the dual of $\cP_2$.
	\item We have $\cP_1,\cP_2\in \mathcal{P}_{2,1}$, $\bar{\sigma}(w(\cP_1)\xi)=\bar{\sigma}(w(\cP_2)\xi)$, $\tau(\cP_1)=\tau(\cP_2)\in \Z$ and $\sigma(w(\cP_2'))\leq \sigma(w(\cP_1'))$, where $\cP_1'$ is the dual of $\cP_1$ and $\cP_2'$ is the dual of $\cP_2$.
\end{enumerate}
We also write $\cP_1 \triangleleft \cP_2$, if $\cP_1 \trianglelefteq \cP_2$ and $\cP_2 \not\trianglelefteq \cP_1$.
\end{definition}

So when comparing $\cP_1,\cP_2\in \mathcal{P}_{1,1}\cup\mathcal{P}_{2,1}$ with $w(\cP_1)=w(\cP_2)$, we first compare $\tau(\cP_1)$ and $\tau(\cP_2)$, and if these happen to be the same integer, we still have to compare the duals. Note that when comparing duals, the order is reversed.

\subsection{The incidence matrix}

The start vertex of a part in $ \mathcal{P}_3$ may be connected to the end vertex of a part in $ \overline{\mathcal{P}}_1$ by some number of $\xi$ edges. The ways in which these connections are made will now be encoded in a matrix. Much of the induction in Section \ref{sec:main_theorem} is concerned with simplifying such matrices in a methodical fashion using flow category moves.

\begin{definition}\label{def:incidence}
	Let $\cC$ be a framed flow category in Chang form, and let $\mathcal{P}$ be a partition of $\cC$. For $\cP\in \mathcal{P}_3$ and $\cP'\in \overline{\mathcal{P}}_1$ define $[\cP:\cP']\in \F_2$ to be
	\[
	[\cP:\cP'] = \left\{ \begin{array}{cl}
		1 & \mbox{if }\M(\alpha(\cP),\omega(\cP'))= \eta\\
		0 & \mbox{if }\M(\alpha(\cP),\omega(\cP'))=\emptyset
	\end{array}
	\right.
	\]
	We will sometimes write $[\cP:\cP']_\cC$ to emphasize the dependence on the flow category.
	
	Let $V_\alpha$ be the $\F_2$-vector space with basis $\mathcal{P}_3$, and let $V_\omega$ be the $\F_2$-vector space with basis $\overline{\mathcal{P}}_1$. Also, let $A(\cC,\mathcal{P})\colon V_\alpha \to V_\omega$ be given by
	\[
	A(\cC,\mathcal{P})(\cP) = \sum_{\cP'\in \overline{\mathcal{P}}_1} [\cP:\cP']\cP'. 
	\] 
	for $\cP\in \mathcal{P}_3$. 
\end{definition}

In a slight abuse of language (as the bases $\mathcal{P}_3$ and $\overline{\mathcal{P}}_1$ have not been ordered) we will consider $A(\cC,\mathcal{P})$ as a matrix in $\F_2$, called the \emph{incidence matrix of the pair $(\cC,\mathcal{P})$}. 

The pre-order $\preccurlyeq$ on $\mathcal{P}_3$ induces an equivalence relation on $\mathcal{P}_3$ by $\cP \approx \cP'$ if $\cP \preccurlyeq \cP'$ and $\cP'\preccurlyeq \cP$. We write the equivalence class of $\cP$ as $[\cP]$.

Similarly, the pre-order $\trianglelefteq$ on $\overline{\mathcal{P}}_1$ induces an equivalence relation on $\overline{\mathcal{P}}_1$ which we also write as $\cP_1\approx \cP_2$. Again we write the equivalence class of $\cP$ as $[\cP]$.

Notice that on $\mathcal{P}_{3,1}= \mathcal{P}_3\cap \overline{\mathcal{P}}_1$ these equivalence relations both reduce to $w(\cP)=w(\cP')$, so we get the same equivalence classes on $\mathcal{P}_{3,1}$.

If $A(\cC,\mathcal{P})$ is the zero matrix, then the score of $\cC$ agrees with the score of a framed flow category which is a disjoint union of Moore, Chang, and Baues-Hennes flow categories. So if we could use move-equivalence to achieve this zero matrix, we would be done.

However, there is one group of Baues-Hennes flow categories that do not have zero incidence matrix, namely the categories corresponding to cyclic words. For cyclic words we need to understand the sub-matrices of $A(\cC,\mathcal{P})$ corresponding to equivalence classes $[\cP]$ for $\cP\in \mathcal{P}_{3,1}$.

\begin{definition}
Let $\cP_1,\cP_2\in \mathcal{P}_3$. Define $E_\alpha^{\cP_1,\cP_2}\colon V_\alpha \to V_\alpha$ by
\[
 E_\alpha^{\cP_1,\cP_2}(\cP) = \left\{ \begin{array}{cc}
                                        \cP & \mbox{if } \cP\not = \cP_2 \\
                                        \cP_1+\cP_2 & \mbox{if }\cP = \cP_2
                                       \end{array}
\right.
\]
Let $\cP_1,\cP_2\in \overline{\mathcal{P}}_1$. Define $E_\omega^{\cP_1,\cP_2}\colon V_\omega \to V_\omega$ by
\[
 E_\omega^{\cP_1,\cP_2}(\cP) = \left\{ \begin{array}{cc}
                                        \cP & \mbox{if } \cP\not = \cP_2 \\
                                        \cP_1+\cP_2 & \mbox{if }\cP = \cP_2
                                       \end{array}
\right.
\]

\end{definition}

\begin{proposition}\label{prop:duking_one}
Let $\mathcal{P}$ be a partition of a framed flow category $\cC$ in Chang form, and let $\cP_1,\cP_2\in \mathcal{P}_3$ with $\cP_1\not=\cP_2$.
\begin{enumerate}
 \item Assume that $\cP_1 \approx \cP_2$. Then there exists a framed flow category $\cC'$ in Chang form move equivalent to $\cC$ with $\Ob(\cC')=\Ob(\cC)$ such that $\mathcal{P}$ is a partition of $\cC'$ and
\[
 A(\cC',\mathcal{P}) = \left\{ \begin{array}{cl}
                                A(\cC,\mathcal{P}) E_\alpha^{\cP_1,\cP_2} & \mbox{if }\cP_1,\cP_2\notin \mathcal{P}_{3,1}\\
                                E_\omega^{\cP_1,\cP_2} A(\cC,\mathcal{P}) E_\alpha^{\cP_1,\cP_2} & \mbox{if }\cP_1,\cP_2\in \mathcal{P}_{3,1}
                               \end{array}
\right.
\]
\item Assume that $\cP_2 \prec \cP_1$ and $w(\cP_1)\not= w(\cP_2)\xi u$ for some $\xi u \in \Xi$. Then there exists a framed flow category $\cC'$ in Chang form move equivalent to $\cC$ with $\Ob(\cC')=\Ob(\cC)$ such that $\mathcal{P}$ is a partition of $\cC'$ and
\[
 A(\cC',\mathcal{P}) = A(\cC,\mathcal{P}) E_\alpha^{\cP_1,\cP_2}.
\]
\end{enumerate}
\end{proposition}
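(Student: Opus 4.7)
The plan is to realize each incidence-matrix operation by an explicit cascade of handle slides between corresponding objects of $\cP_1$ and $\cP_2$, followed by extended Whitney tricks to absorb any spurious moduli spaces created along the way. The core building block is the handle slide of $\alpha(\cP_2)$ over $\alpha(\cP_1)$: since both sit in level~3, this slide augments $\M(\alpha(\cP_2),y)$ by a copy of $\M(\alpha(\cP_1),y)$ for every $y$ with $|y|\le 2$, and changes no other moduli space originating from a level-3 object.

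In case~(1) with $\cP_1,\cP_2\notin\mathcal{P}_{3,1}$, the hypothesis $\cP_1\approx\cP_2$ forces $w(\cP_1)=w(\cP_2)=:w$, so I can index the objects as $x^j_1=\alpha(\cP_j),\,x^j_2,\ldots,x^j_{k+1}$ following Definition~\ref{defn:bh_flow_category} and perform matched slides along the pairs $(x^1_i,x^2_i)$. After the initial slide $x^2_1$ over $x^1_1$, the only new internal moduli spaces have the form $\M(x^2_1,x^1_i)$ and copy the corresponding spaces inside $\cP_1$. I then slide $x^2_i$ over $x^1_i$ inductively for each subsequent $i$, choosing signs so that at each stage the internal spaces created by the previous step are cancelled, using extended Whitney tricks to clean up the $2$-dimensional cobordisms that appear as by-products. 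Part axioms~(2) and~(3) guarantee that for $i\ge 2$ the external moduli spaces $\M(x^1_i,\cdot)$ and $\M(\cdot,x^1_i)$ vanish in the dimensions that matter, so the slides past the first produce no external effect. The net change in $A(\cC,\mathcal{P})$ is therefore exactly the column operation $A\mapsto A\,E_\alpha^{\cP_1,\cP_2}$.

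The $\mathcal{P}_{3,1}$ subcase differs in that the end objects $\omega(\cP_j)$ sit at level~1 and index rows of $A$, so the closing slide $\omega(\cP_2)$ over $\omega(\cP_1)$ now has an external effect: for any level-3 object $u=\alpha(\cP')$, the space $\M(u,\omega(\cP_2))$ picks up $\M(u,\omega(\cP_1))$, producing the row operation $E_\omega^{\cP_1,\cP_2}$ and hence the conjugated formula. For case~(2), $\cP_2\prec\cP_1$ strictly, the two words are no longer identical, but one still cascades slides along the longest common initial segment and then attends to the first point of divergence. The excluded configuration $w(\cP_1)=w(\cP_2)\,\xi u$ is precisely the scenario in which the divergence is a $\xi$-letter, occurring after $\omega(\cP_2)$ has reached level~1; such a divergence would force an uncontrolled row operation or disturb the partition. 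Under the stated hypothesis, the leftover structure of $\cP_1$ past the cascade can be reabsorbed via extended Whitney tricks using the framed null-cobordisms provided by Section~\ref{sec:framed_manifolds}, leaving only the column operation.

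The main obstacle is the bookkeeping: one must verify that after the full sequence of slides every entry of $A(\cC,\mathcal{P})$ not named in the stated formula is unchanged, every part of $\mathcal{P}$ retains its internal structure as a Baues--Hennes flow category of the same word, and the resulting $\cC'$ is still in Chang form. This will require careful tracking of framings and signs at each slide (so that the internal cancellations actually occur), repeated appeal to the part axioms to prevent leakage of a slide into an unrelated part of $\cC$, and extended Whitney tricks to tidy up any residual $2$-dimensional moduli spaces, where the key input is the classification of framed surfaces bounding a given framed $1$-manifold discussed in Section~\ref{sec:framed_manifolds}.
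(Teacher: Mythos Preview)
Your approach is essentially the paper's: start by sliding $\alpha(\cP_2)$ over $\alpha(\cP_1)$, then cascade handle slides through the matched objects of the two parts to restore Chang form. The paper organizes the cascade as an induction on the length of $w(\cP_1)$, invoking Lemma~\ref{lem:hook_be} at each $\eta$-letter, whereas you describe the same sequence of moves directly as ``matched slides along the pairs $(x_i^1,x_i^2)$''. These are the same argument in different packaging.

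Two small corrections. In the $\mathcal{P}_{3,1}$ subcase you name the correct slide but describe the wrong effect: sliding $\omega(\cP_2)$ over $\omega(\cP_1)$ adds $\M(u,\omega(\cP_2))$ into $\M(u,\omega(\cP_1))$, not the reverse; this is exactly what left multiplication by $E_\omega^{\cP_1,\cP_2}$ records (compare the paragraph after the proposition). More substantively, your account of case~(2) via ``framed null-cobordisms from Section~\ref{sec:framed_manifolds}'' misses the actual mechanism. The point is parity: at the first letter where $w(\cP_1)$ and $w(\cP_2)$ diverge, the strict inequality in the $\sigma$-symbols forces the ratio of the two torsion coefficients to be at least~$2$, so the number of handle slides needed to restore primary Smith normal form at that level is \emph{even}, and the $\eta$-edges produced cancel in pairs (this is Lemma~\ref{lem:hook_be}(1) and~(5) versus~(2)). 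That parity observation, not any surface-theoretic argument, is why no row operation survives in case~(2), and it should be made explicit in your write-up.
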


Right multiplication by $E_\alpha^{\cP_1,\cP_2}$ means that every $\xi$-edge going out of $\alpha(\cP_1)$ and into some object $c$ leads to an extra $\xi$-edge going out of $\alpha(\cP_2)$ and into $c$. If there was already a $\xi$-edge, the two edges are removed, corresponding to a $0$ in the incidence matrix.

Left multiplication by $E_\omega^{\cP_1,\cP_2}$ means that every $\xi$-edge going into $\omega(\cP_2)$ from some object $a$ leads to an extra $\xi$-edge going into $\omega(\cP_1)$, again with two identical $\xi$-edges cancelling each other. Note that this only occurs for $\cP_1,\cP_2\in \mathcal{P}_{3,1}$.

\begin{proof}
The proof of (1) is by induction on the length of $w(\cP_1)$. Slide $\alpha(\cP_2)$ over $\alpha(\cP_1)$ so that every $\xi$-edge coming out of $\alpha(\cP_1)$ is now also coming out of $\alpha(\cP_2)$. If $w(\cP_1) = \emptyset$ we are done after extended Whitney tricks to remove double $\xi$-edges. If $w(\cP_1)$ starts with $^p$ the resulting flow category is not in primary Smith normal form, but this can be fixed with another handle slide. If $w(\cP_1) = \,\!^p\eta$ or $^p\eta_q$, we have an extra $\eta$-edge preventing $\cC'$ from being in Chang form, compare Figure \ref{fig:first_move}.

\begin{figure}[ht]
\begin{tikzpicture}
\score{0.6}{3.2}
\hookbet{0.4}{$p$}{$q$};
\hookteb{2}{$p$}{$q$};
\draw (1.2,1.2) -- (2.8,0);
\shookl{0.4}{0.6};
\node at (0.8,2.2) {$\cP_2$};
\node at (2.4,2.2) {$\cP_1$};
\end{tikzpicture} 
\label{fig:first_move}
\caption{$\cC'$ after two handle slides.}
\end{figure}
Using Lemma \ref{lem:hook_be} (2) we remove this extra $\eta$-edge at the cost of copying all $\xi$-edges ending in $\omega(\cP_2)$ to also end in $\omega(\cP_1)$ (if $w(\cP_1)$ ends in $_q$). This proves (1) for $w(\cP_1)$ a subword of $^p\eta_q$.

If we inductively assume that $w(\cP_1)=\,\!^{p_1}\eta_{q_1}\xi u$ for some $\xi u\in \Xi$, we can proceed as before to get a score as in Figure \ref{fig:second_move}.

\begin{figure}[ht]
\begin{tikzpicture}
\score{0.6}{6.8}
\hookbet{0.4}{$p_2$}{$q_2$};
\hookmix{1.2};
\hookbet{2}{$p_1$}{$q_1$};
\hookteb{4}{$p_1$}{$q_1$};
\hookx{4.8};
\hookteb{5.6}{$p_2$}{$q_2$};
\draw (1.2,1.8) -- (4.8,0.6);
\draw (0.4,0.6) -- (0.2,0.9);
\draw (6.4,0.6) -- (6.6,0.9);
\end{tikzpicture}
\label{fig:second_move}
\caption{$\cC'$ after Lemma \ref{lem:hook_be}(2).}
\end{figure}
For $i=1,2$ break the parts $\cP_i$ into parts $\cP_i',\cP_i''$ with $w(\cP_i')=\,\!^{p_1}\eta_{q_1}$ and $w(\cP_i'')=u$. There are two $\xi$-edges going out of $\alpha(\cP_2'')$, one ending in $\omega(\cP_2')$ and the other in $\omega(\cP_1')$, while only one $\xi$-edge is going out of $\alpha(\cP_1'')$, ending in $\omega(\cP_1')$.

Apply induction to $\cP_1''\approx \cP_2''$, and then put the parts $\cP_i',\cP_i''$ back together, for $i=1,2$.

To prove (2), we also begin by sliding $\alpha(\cP_2)$ over $\alpha(\cP_1)$. This is enough if $w(\cP_1)=\emptyset$, and if $w(\cP_1)$ starts with $^p$ and $w(\cP_2)$ with $^{p'}$ where $p>p'$ we have to make an even number of slides to get the flow category back into Smith normal form. We may get an even number of $\eta$ edges this way, which can be removed with Whitney tricks. After this we get the result. If $w(\cP_1)= \,\!^p$ and $w(\cP_2)$ starts with $^p\eta$, we only need to do this slide once to get the result.

If $w(\cP_1)$ starts with $^p\eta q$, and $w(\cP_2)=\,\!^p\eta$ or starts with $^p\eta _{q'}$ where $q<q'$, begin with two handle slides and then Lemma \ref{lem:hook_be} (5) or (1) to get the result.

Inductively, assume $w(\cP_1)=u\xi v$ and $w(\cP_2)=u\xi v'$ with $\sigma(\xi v) < \sigma(\xi v')$. Then use part (1) on $u$, followed by the induction on $v$.
\end{proof}

\begin{proposition}\label{prop:duking_two}
Let $\mathcal{P}$ be a partition of a framed flow category $\cC$ in Chang form, let $\cP_1,\cP_2\in \overline{\mathcal{P}}_1$ with $\cP_1\not=\cP_2$.
\begin{enumerate}
 \item Assume that $\cP_1\approx \cP_2$, and if $\cP_1,\cP_2\in \mathcal{P}_{1,1}\cup \mathcal{P}_{2,1}$ additionally assume that $\tau(\cP_1),\tau(\cP_2)\in \{\pm \infty\}$. Then there exists a framed flow category $\cC'$ in Chang form, move equivalent to $\cC$, with $\Ob(\cC')=\Ob(\cC)$, such that $\mathcal{P}$ is a partition of $\cC'$, and such that
\[
 A(\cC',\mathcal{P}) = \left\{ \begin{array}{cl}
                                E_\omega^{\cP_1,\cP_2} A(\cC,\mathcal{P}) & \mbox{if }\cP_1,\cP_2\notin \mathcal{P}_{3,1}\\
                                E_\omega^{\cP_1,\cP_2} A(\cC,\mathcal{P}) E_\alpha^{\cP_1,\cP_2} & \mbox{if }\cP_1,\cP_2\in \mathcal{P}_{3,1}
                               \end{array}
\right.
\]
\item Assume that $\bar{\sigma}(w(\cP_1)\xi) < \bar{\sigma}(w(\cP_2)\xi)$ and $w(\cP_2)\not= v\xi w(\cP_1)$ for any basic word $v\xi\in \overline{\Xi}$. Then there exists a framed flow category $\cC'$ in Chang form, move equivalent to $\cC$, with $\Ob(\cC')=\Ob(\cC)$, such that $\mathcal{P}$ is a partition of $\cC'$, and such that
\[
 A(\cC',\mathcal{P}) = E_\omega^{\cP_1,\cP_2} A(\cC,\mathcal{P}).
\]
\end{enumerate}
\end{proposition}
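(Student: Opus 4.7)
My strategy is to mirror the proof of Proposition \ref{prop:duking_one}, replacing every step by its dual at the $\omega$-end. The basic move is to slide $\omega(\cP_2)$ over $\omega(\cP_1)$: by the handle slide of Section \ref{sec:handleslide}, each length-$2$ edge coming into $\omega(\cP_2)$ produces a parallel copy coming into $\omega(\cP_1)$, which realises the left multiplication by $E_\omega^{\cP_1,\cP_2}$ on the incidence matrix. As collateral damage, the outgoing $0$-dimensional edge of $\omega(\cP_1)$ (encoding the terminal letter of $w(\cP_1)$) gets copied to $\omega(\cP_2)$, and, depending on the word structure, extra $\eta$-edges may appear between intermediate objects of the parts. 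The remainder of the argument removes this extra data using Lemmas \ref{lem:hook_be} and \ref{lem:hook_te}.

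For part (1), I would induct on the common length of $w(\cP_1)$ and $w(\cP_2)$, which agree letter-by-letter because $\bar{\sigma}(w(\cP_1)\xi)=\bar{\sigma}(w(\cP_2)\xi)$, working backwards from the $\omega$-end. The base case $w(\cP_i)=\emptyset$ follows from the slide plus an extended Whitney trick to cancel the doubled outgoing edge. For the inductive step, after the slide the appearing auxiliary edges fit the symmetric configuration of Lemma \ref{lem:hook_be}(2) or Lemma \ref{lem:hook_te}(2), and invoking the relevant lemma reduces the problem to an instance of part (1) for the words with the last two letters removed. The hypothesis $\tau(\cP_i)\in\{\pm\infty\}$ is used exactly to ensure no stray $0$-dimensional edge at level $1$--$2$ obstructs the initial slide. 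In the subcase $\cP_1,\cP_2\in\mathcal{P}_{3,1}$, the induction eventually propagates all the way up to the level-$3$ start objects, where the same procedure produces the claimed additional right multiplication by $E_\alpha^{\cP_1,\cP_2}$.

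For part (2), the first letter at which the two words differ along $\bar{\sigma}$ is where cleanup requires the asymmetric $p\neq q$ versions of Lemma \ref{lem:hook_be}(1),(5) or Lemma \ref{lem:hook_te}(1),(3). These perform the relevant slide an even number of times, so that the extraneous edges cancel internally and the $\cP_1$ side remains unpolluted. The non-concatenation hypothesis $w(\cP_2)\neq v\xi w(\cP_1)$ rules out the degenerate case in which the cascade of slides would run off the $\omega$-end of $\cP_1$'s word into an internal $\xi$-edge of $\cP_2$, which would both destroy the inductive control and spoil the intended row operation. The argument then closes by induction on the tail of $w(\cP_2)$ past the divergent letter, exactly as in Proposition \ref{prop:duking_one}(2).

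The main obstacle, as in the preceding proposition, is the bookkeeping required to verify that the iterated slides and invocations of Lemmas \ref{lem:hook_be} and \ref{lem:hook_te} leave every other entry of $A(\cC,\mathcal{P})$ unchanged. The local nature of those lemmas is designed for precisely this: each edge introduced within a part is paired with a cancelling edge in the same part, so the only net effect on $A(\cC,\mathcal{P})$ is the prescribed $E_\omega^{\cP_1,\cP_2}$ row operation together with its $E_\alpha^{\cP_1,\cP_2}$ column counterpart in the $\mathcal{P}_{3,1}$ subcase.
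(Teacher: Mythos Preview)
Your approach is essentially the same as the paper's: start by sliding $\omega(\cP_2)$ over $\omega(\cP_1)$, then induct along the word toward the $\alpha$-end using the symmetric cases of Lemmas \ref{lem:hook_be} and \ref{lem:hook_te}, with the $\mathcal{P}_{3,1}$ subcase producing the extra $E_\alpha^{\cP_1,\cP_2}$ factor and part (2) following from the even-slide cancellation at the first letter where the $\bar\sigma$-symbols diverge.

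One correction: you place the role of the hypothesis $\tau(\cP_i)\in\{\pm\infty\}$ at the \emph{initial} slide on the $\omega$-objects, but this is backwards. The $\tau$-value is defined in terms of the $0$-dimensional edge incident to $\alpha(\cP_i)$, not $\omega(\cP_i)$, so the hypothesis is only invoked at the \emph{final} step of the induction, when the cascade of slides reaches the start objects. For $\cP_i\in\mathcal{P}_{2,1}$ one needs $\tau(\cP_1)=-\infty$ so that the last slide of $\alpha(\cP_2)$ over $\alpha(\cP_1)$ does not copy an unwanted level $2$--$1$ edge out of $\alpha(\cP_1)$; for $\cP_i\in\mathcal{P}_{1,1}$ the analogous statement with $\tau=\infty$ applies. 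Your initial slide at $\omega$ is unobstructed regardless. Once this is repositioned, your sketch matches the paper's argument.
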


\begin{proof}
The proof of (1) is very similar to the proof of Proposition \ref{prop:duking_one} (1) and is also done by using induction on the length of $w(\cP_2)$. We begin with sliding $\omega(\cP_2)$ over $\omega(\cP_1)$ which results in the left multiplication by $E_\omega^{\cP_1,\cP_2}$. Let us consider the various cases depending on whether $\cP_1,\cP_2$ are in $\mathcal{P}_{1,1}$, $\mathcal{P}_{2,1}$ and $\mathcal{P}_{3,1}$.

If $\cP_2\in \mathcal{P}_{2,1}$, the last slide is moving $\alpha(\cP_2)$ over $\alpha(\cP_1)$.
\begin{center}
\begin{tikzpicture}
\score{0.6}{4}
\hookmie{0.4};
\node at (0.4,1.5){$\alpha(\cP_2)$};
\draw (1.2,0) -- (1.2,0.2);
\node at (1.2,0.275) {$\cdot$};
\draw (1.2,0.4) -- (1.2,0.6);
\dotc{1.2};
\hookx{1.2};
\dotc{2.8};
\draw (2.8,0) -- (2.8,0.2);
\node at (2.8,0.275) {$\cdot$};
\draw (2.8,0.4) -- (2.8,0.6);
\node at (3.6,1.5) {$\alpha(\cP_1)$};
\draw[gray,->] (0.6,1.25) to[out=10,in=170] (3.4,1.25);
\hooke{2.8};
\draw (0.4,1.2) -- (2.8,0);
\node at (4.5,0.9) {$\sim$};
\scorex{0.6}{4}{5}
\hookmie{5.4};
\draw (6.2,0) -- (6.2,0.2);
\node at (6.2,0.275) {$\cdot$};
\draw (6.2,0.4) -- (6.2,0.6);
\dotc{6.2};
\hookx{6.2};
\dotc{7.8};
\draw (7.8,0) -- (7.8,0.2);
\node at (7.8,0.275) {$\cdot$};
\draw (7.8,0.4) -- (7.8,0.6);
\hooke{7.8};
\end{tikzpicture}
\end{center}
Since the condition $\tau(\cP_1)=-\infty$ means that there is no $c\in \Ob(\cC)\setminus \Ob(\cP_1)$ with $\M(\alpha(\cP_1),c)\not=\emptyset$, this case is now finished.

If $\cP_1\in \mathcal{P}_{3,1}$ we essentially have the same picture, but with $|\alpha(\cP_2)|=|\alpha(\cP_1)|=3$. After the previous slide the graph is as below, and after a slide of $\alpha(\cP_2)$ over $\alpha(\cP_1)$, noting that all $\xi$-edges going out of $\alpha(\cP_1)$ now also go out of $\alpha(\cP_2)$, explaining the multiplication by $E_{\alpha}^{\cP_1,\cP_2}$.
\begin{center}
\begin{tikzpicture}
\score{0.6}{4}
\hookmiet{0.4}{};
\draw (1.2,0) -- (1.2,0.2);
\node at (1.2,0.275) {$\cdot$};
\draw (1.2,0.4) -- (1.2,0.6);
\dotc{1.2};
\hookx{1.2};
\dotc{2.8};
\draw (2.8,0) -- (2.8,0.2);
\node at (2.8,0.275) {$\cdot$};
\draw (2.8,0.4) -- (2.8,0.6);
\hooket{2.8}{};
\draw (0.4,1.8) -- (3.6,1.2);
\shookl{3.9}{1.6};
\draw[gray,->] (0.6,1.85) to[out=10,in=170] (3.4,1.85);
\node at (4.5,0.9) {$\sim$};
\scorex{0.6}{4}{5}
\hookmiet{5.4}{};
\draw (6.2,0) -- (6.2,0.2);
\node at (6.2,0.275) {$\cdot$};
\draw (6.2,0.4) -- (6.2,0.6);
\dotc{6.2};
\hookx{6.2};
\dotc{7.8};
\draw (7.8,0) -- (7.8,0.2);
\node at (7.8,0.275) {$\cdot$};
\draw (7.8,0.4) -- (7.8,0.6);
\hooket{7.8}{};
\shookl{8.9}{1.6};
\shookl{5.7}{1.6};
\end{tikzpicture}
\end{center}
If $\cP_2\in \mathcal{P}_{1,1}$ we end up again with the situation that we have to slide $\alpha(\cP_2)$ over $\alpha(\cP_1)$, and since there are no objects $d\in \Ob(\cC)\setminus\Ob(\cP_1)$ with $\M(\alpha(\cP_1),d)\not=\emptyset$, we are finished.
\begin{center}
\begin{tikzpicture}
\score{0.6}{5.6}
\hookx{0.4};
\hookmiet{1.2}{};
\draw (2,0) -- (2,0.2);
\node at (2,0.275) {$\cdot$};
\draw (2,0.4) -- (2,0.6);
\dotc{2};
\hookx{2};
\dotc{3.6};
\draw (3.6,0) -- (3.6,0.2);
\node at (3.6,0.275) {$\cdot$};
\draw (3.6,0.4) -- (3.6,0.6);
\hooket{3.6}{};
\hookmix{4.4};
\draw (1.2,1.8) -- (5.2,0.6);
\draw[gray,->] (0.6,0.55) to[out=-10,in=-170] (5,0.55);
\node at (6.1,0.9) {$\sim$};
\scorex{0.6}{5.6}{6.6}
\hookx{7};
\hookmiet{7.8}{};
\draw (8.6,0) -- (8.6,0.2);
\node at (8.6,0.275) {$\cdot$};
\draw (8.6,0.4) -- (8.6,0.6);
\dotc{8.6};
\hookx{8.6};
\dotc{10.2};
\draw (10.2,0) -- (10.2,0.2);
\node at (10.2,0.275) {$\cdot$};
\draw (10.2,0.4) -- (10.2,0.6);
\hooket{10.2}{};
\hookmix{11};
\end{tikzpicture}
\end{center}
The proof of (2) is again similar, with the condition $\bar{\sigma}(w(\cP_1)\xi) < \bar{\sigma}(w(\cP_2)\xi)$ and $w(\cP_2)\not= v\xi w(\cP_1)$ implying that at some point we have to make an even number of slides to get the flow category back into primary Smith normal form, which implies that we do not get $\xi$ or $\eta$-edges preventing the parts from being in Baues--Hennes form.
\end{proof}

\begin{proposition}\label{prop:duking_three}
Let $\mathcal{P}$ be a partition of a framed flow category $\cC$ in Chang form, let $\cP_1,\cP_2\in \mathcal{P}_{1,1}\cup \mathcal{P}_{2,1}$ with $\cP_1\not=\cP_2$ such that $w(\cP_1) = w(\cP_2)$ and $\tau(\cP_1),\tau(\cP_2)\in \Z$. Let $\cP_1'$ be the dual of $\cP_1$, and $\cP_2'$ be the dual of $\cP_2$.
\begin{enumerate}
\item Assume that $\cP_1\approx \cP_2$. Then there is a framed flow category $\cC'$ in Chang form move equivalent to $\cC$ with $\Ob(\cC')=\Ob(\cC)$ and $\mathcal{P}$ a partition of $\cC'$, such that
\[
 A(\cC',\mathcal{P}) = \left\{ \begin{array}{cl}
                                E_\omega^{\cP_1,\cP_2} A(\cC,\mathcal{P}) & \mbox{if }\cP_1',\cP_2'\notin \overline{\mathcal{P}}_1\\
                                E_\omega^{\cP_1',\cP_2'} E_\omega^{\cP_1,\cP_2} A(\cC,\mathcal{P}) & \mbox{if }\cP_1',\cP_2'\in \overline{\mathcal{P}}_1
                               \end{array}
\right.
\]
\item Assume that $\cP_1 \triangleleft \cP_2$ and that $w(\cP_1')\not= w(\cP_2')\xi u$ for any word $\xi u\in \Xi$. Then there is a framed flow category $\cC'$ in Chang form move equivalent to $\cC$ with $\Ob(\cC')=\Ob(\cC)$ and $\mathcal{P}$ a partition of $\cC'$, such that
\[
 A(\cC',\mathcal{P}) = E_\omega^{\cP_1,\cP_2} A(\cC,\mathcal{P}).
\]
\end{enumerate}
\end{proposition}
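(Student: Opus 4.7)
The proof follows the pattern of Propositions \ref{prop:duking_one} and \ref{prop:duking_two}, but with an additional phase to handle the duals. We focus on the case $\cP_1, \cP_2\in \mathcal{P}_{1,1}$; the $\mathcal{P}_{2,1}$ case is dual, obtained by reflecting all diagrams vertically. Since $\tau(\cP_1)=\tau(\cP_2)\in \Z$ must be both positive or both negative, no mixed case arises. Write $a_i = \alpha(\cP_i)$, $c_i = \omega(\cP_i)$ at level $1$, and $b_i = \alpha(\cP_i')$ at level $2$. By hypothesis the $0$-dimensional moduli spaces $\M(b_1, a_1)$ and $\M(b_2, a_2)$ each consist of $\tau := \tau(\cP_1) = \tau(\cP_2)$ positively framed points, while $\M(b_1, a_2)$ and $\M(b_2, a_1)$ are empty since $\cC$ is in primary Smith normal form.

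The first phase of the plan is to slide $c_2$ over $c_1$ and then cascade a synchronized sequence of handle slides through the identical words $w(\cP_1) = w(\cP_2)$, exactly as in the proof of Proposition \ref{prop:duking_two}(1). This cascade terminates with a handle slide of $a_2$ over $a_1$, contributing the factor $E_\omega^{\cP_1, \cP_2}$ to the new incidence matrix. However, because $\M(b_1, a_1)$ is non-empty, this final slide creates $\tau$ extra points in $\M(b_1, a_2)$, so the flow category no longer lies in primary Smith normal form.

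The second phase restores primary Smith normal form by sliding $b_1$ over $b_2$, which cancels the excess against the $\tau$ points in $\M(b_2, a_2)$. In case (1), the equivalence $\cP_1 \approx \cP_2$ forces $\rho(w(\cP_1')) = \rho(w(\cP_2'))$, and since the $\rho$-symbol of an $\Eta$-word determines the word, the two dual words coincide; we then cascade this slide through them synchronously, just as in the first phase. If $\cP_1', \cP_2' \in \overline{\mathcal{P}}_1$, the cascade culminates in a slide of $\omega(\cP_1')$ over $\omega(\cP_2')$, producing the extra factor $E_\omega^{\cP_1', \cP_2'}$; otherwise it terminates above level $1$ and makes no further contribution. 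In case (2), the dual words instead satisfy a strict inequality ($\rho(w(\cP_2')) < \rho(w(\cP_1'))$ in the $\mathcal{P}_{1,1}$ sub-case, or $\sigma(w(\cP_2')) < \sigma(w(\cP_1'))$ in the $\mathcal{P}_{2,1}$ sub-case), and we appeal to arguments analogous to Propositions \ref{prop:duking_one}(2) and \ref{prop:duking_two}(2) applied to the duals. The additional hypothesis $w(\cP_1') \not= w(\cP_2') \xi u$ rules out the exceptional case and ensures that the cascade through the duals produces no net effect on the $\omega$-side, so only $E_\omega^{\cP_1, \cP_2}$ survives.

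The main technical difficulty will be bookkeeping: at each intermediate stage of both phases we must check that Chang form is preserved via Lemmas \ref{lem:hook_be} and \ref{lem:hook_te}, and that spurious $\eta$-edges created during the first phase's cascade do not interfere with the second phase. Since the primary parts and their duals share only the level-$1$ objects $a_i$, we expect the two cascades to decouple cleanly, with the combined effect on the incidence matrix being a product of the edge-shift operators announced in the statement.
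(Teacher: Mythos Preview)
Your plan is essentially the paper's own argument: cascade through the identical words $w(\cP_1)=w(\cP_2)$ as in Proposition~\ref{prop:duking_two}, then repair primary Smith normal form with a slide at the dual level, and propagate through the duals. Two small points.

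First, the indices are swapped in your bookkeeping. Sliding $a_2$ over $a_1$ (with $|a_i|=1$) alters $\M(u,a_1)$ by adding $\M(u,a_2)$; hence the $\tau$ excess lands in $\M(b_2,a_1)$, not in $\M(b_1,a_2)$, and the correcting slide is $b_2$ over $b_1$. This is cosmetic and does not affect the structure of the argument.

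Second, instead of re-running an independent cascade through the duals, the paper performs a single correcting slide at $\alpha(\cP_i')$, then temporarily splits each dual as $w(\cP_i')=\xi\,w(\cP_i'')$ into a one-object part $\Sp_i$ and a remainder $\cP_i''\in\mathcal{P}_3$, and invokes Proposition~\ref{prop:duking_one} directly on the pair $\cP_2''\preccurlyeq\cP_1''$ (this inequality following from $\cP_1\trianglelefteq\cP_2$). This reduction handles both parts (1) and (2) uniformly and yields the extra factor $E_\omega^{\cP_1',\cP_2'}$ exactly when $\cP_1'',\cP_2''\in\mathcal{P}_{3,1}$, i.e.\ when $\cP_1',\cP_2'\in\overline{\mathcal{P}}_1$. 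Your direct cascade through the duals would work too, but the reduction to the already-proven Proposition~\ref{prop:duking_one} is shorter and avoids repeating that induction.
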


\begin{proof}
The proof starts in the same way as the proof of Proposition \ref{prop:duking_two}. If $\cP_1,\cP_2\in \mathcal{P}_{2,1}$ we get a situation where we need to slide $\alpha(\cP_2)$ over $\alpha(\cP_1)$.
\begin{center}
\begin{tikzpicture}
\score{0.6}{4}
\hookmie{0.4};
\hookm{0.4}{$p$};
\draw (1.2,0) -- (1.2,0.2);
\node at (1.2,0.275) {$\cdot$};
\draw (1.2,0.4) -- (1.2,0.6);
\dotc{1.2};
\hookx{1.2};
\dotc{2.8};
\draw (2.8,0) -- (2.8,0.2);
\node at (2.8,0.275) {$\cdot$};
\draw (2.8,0.4) -- (2.8,0.6);
\hooke{2.8};
\hookmim{3.6}{$q$};
\draw (0.4,1.2) -- (2.8,0);
\draw[gray,->] (0.6,1.25) to[out=10,in=170] (3.4,1.25);
\node at (4.5,0.9) {$\sim$};
\scorex{0.6}{4}{5}
\hookmie{5.4};
\hookm{5.4}{$p$};
\draw (6.2,0) -- (6.2,0.2);
\node at (6.2,0.275) {$\cdot$};
\draw (6.2,0.4) -- (6.2,0.6);
\dotc{6.2};
\hookx{6.2};
\dotc{7.8};
\draw (7.8,0) -- (7.8,0.2);
\node at (7.8,0.275) {$\cdot$};
\draw (7.8,0.4) -- (7.8,0.6);
\hooke{7.8};
\hookmim{8.6}{$q$};
\draw (5.4,1.2) -- (8.6,0.6);
\node at (7,0.7) {$q$};
\end{tikzpicture}
\end{center}
To get this back into primary Smith normal form, we need to slide $\alpha(\cP_2')$ over $\alpha(\cP_1')$. If $p < q$ we have to slide an even number of times, and no further $\xi$-edges prevent the parts to be in Baues--Hennes form. If $p=q$, possible $\xi$-edges going into $\alpha(\cP_2')$ will now lead to new $\xi$-edges going into $\alpha(\cP_1')$. If $\omega(\cP_2')=\alpha(\cP_2')$ this reflects the extra factor $E_\omega^{\cP_1',\cP_2'}$. 
Otherwise we can temporarily break the dual partitions into $\cP_2''$ and $\Sp_2$, and $\cP_1''$ and $\Sp_1$, such that $w(\cP_i') = \xi w(\cP_i'')$ and $\Ob(\Sp_i)=\{\alpha(\cP_i')\}$ for $i=1,2$. Note that there is exactly one $\xi$-edge going out of $\alpha(\cP_1'')$ and into $\alpha(\cP_1')$, while there is a $\xi$-edge going out of $\alpha(\cP_2'')$ going into $\alpha(\cP_2')$, and another $\xi$-edge going out of $\alpha(\cP_2'')$ and into $\alpha(\cP_1')$. We now use Proposition \ref{prop:duking_one} with $\cP_2''\preccurlyeq \cP_1''$ to finish the argument. Note that $\cP_1\trianglelefteq \cP_2$ implies $\cP_2''\preccurlyeq \cP_1''$.

The case $\cP_1,\cP_2\in \mathcal{P}_{1,1}$ is similar, and will be omitted.
\end{proof}

\subsection{Admissible partitions}

We must now consider a subtlety of the hypotheses of the Propositions \ref{prop:duking_one}, \ref{prop:duking_two} and \ref{prop:duking_three}. Part of the hypotheses of Proposition~\ref{prop:duking_one}~(2) require that $w(\cP_1) \not= w(\cP_2)\xi u$ for any basic word $u$. Similar conditions are present in Propositions \ref{prop:duking_two} (2) and \ref{prop:duking_three} (2). But we will also need to use similar results to these Propositions in the cases when $w(\cP_1)=w(\cP_2)\xi u$, so we discuss this further and show how to proceed in that situation by adding an extra condition to the partitions called \emph{admissibility}.

\begin{construction}\label{rem:no_worries}

Assume that $\mathcal{P}$ is a partition of a framed flow category $\cC$ in Chang form, and let $\cP_1,\cP_2\in \overline{\mathcal{P}}_1$ with $\cP_1\not=\cP_2$. Assume moreover that $\cP_2 \prec \cP_1$ and $w(\cP_1)= w(\cP_2)\xi u$ for some $\xi u \in \Xi$.

Temporarily break the part $\cP_1$ into two parts $\cP_1'$ and $\cP_1''$ with $w(\cP_1')=w(\cP_2)$ and $w(\cP_1'')=u$. Then $\cP_1'\approx \cP_2$ and we can apply Proposition \ref{prop:duking_one} (1) to $\cP_1'$ and $\cP_2$ resulting in a framed flow category where all $\xi$-edges going into $\omega(\cP_2)$ now have copies going into $\omega(\cP_1')$.
\begin{center}
\begin{tikzpicture}
\score{0.6}{7.6}
\hookbet{0.4}{$p_k$}{$q_k$};
\draw (1.2,1.8) -- (1.4,1.5);
\draw [dotted] (1.5,1.35) -- (1.7,1.05);
\draw (1.8,0.9) -- (2,0.6);
\hookbet{2}{$p_1$}{$q_1$};
\hookteb{3.6}{$p_1$}{$q_1$};
\draw (4.4,0.6) -- (4.6,0.9);
\draw [dotted] (4.7,1.05) -- (4.9,1.35);
\draw (5,1.5) -- (5.2,1.8);
\hookteb{5.2}{$p_k$}{$q_k$};
\hookx{6};
\hookmit{6.8}{$p_{k+1}$};
\draw (6.8,1.2) -- (7,0.9);
\draw [dotted] (7.1,0.75) -- (7.3,0.45);
\shookl{0.4}{0.6};
\shookl{6}{0.6};
\node at (1.4,2.2) {$\cP_2$};
\node at (5,2.2) {$\cP_1'$};
\node at (7.2,2.2) {$\cP_1''$};
\end{tikzpicture}
\end{center}
We cannot simply piece $\cP_1'$ and $\cP_1''$ together again to get back the part $\cP_1$, because there are now too many $\xi$-edges going into $\omega(\cP_1')$.
But we can apply Proposition~\ref{prop:duking_one}~(2) for those parts $\cP$ with $[\cP:\cP_2]=1$ and $\cP\prec \cP_1''$, removing some of the extra $\xi$-edges going into $\omega(\cP_1')$.  Similarly, one could remove unwanted $\xi$-edges using Proposition \ref{prop:duking_two}~(2) and Proposition \ref{prop:duking_three}~(2), provided the respective hypotheses of these propositions were satisfied.

If all the unwanted $\xi$-edges could be removed then we could combine the parts $\cP_1'$ and $\cP_1''$ and get back to our original partition.
\end{construction}

It would be highly desirable to be able to remove all of the $\xi$-edges in the manner described in Construction \ref{rem:no_worries}. The following definition of \em admissible \em partition will allow us to indeed remove all extra $\xi$-edges.

Notice that if $\cP\in \mathcal{P}$, we get
\[
 w(\cP) = w(\cP_1)\xi w(\cP_2) \xi \cdots \xi w(\cP_k)
\]
with $\cP_i\in \mathcal{T}$ for $i=1,\ldots,k$ and $k\geq 1$.


\begin{definition}
Let $\cC$ be a framed flow category in Chang form and $\mathcal{P}$ a partition of $\cC$. We write 
\[
\mathcal{P}_3^\cC=\{ \cP\in \mathcal{P}_3\,|\,\mbox{there exists }\cP'\in \overline{\mathcal{P}}_1 \mbox{ with }[\cP:\cP']=1\}.
\]

We call $\mathcal{P}$ \emph{admissible}, if the following conditions are satisfied. For all $\cP_i\in \mathcal{P}$ with
\begin{equation}\label{eq:wordcpi}
 w(\cP_i)=w(\cP_{i,1})\xi \cdots \xi w(\cP_{i,k_i}),
\end{equation}
where $\cP_{i,j}\in \mathcal{T}$ for $j=1,\ldots,k_i$ and $k_i\geq 2$, we have:
\begin{enumerate}
 \item[(A1)] For $u\geq 2$ we have
\[
 \sigma(\xi w(\cP)) \leq \sigma(\xi w(\cP_{i,u})\xi w(\cP_{i,u+1})\xi \cdots \xi w(\cP_{i,k_i}))
\]
for all $\cP\in \mathcal{P}_3^\cC$.
 \item[(A2)] If there is a $\cP\in \mathcal{P}_3^\cC$ with
\[
 w(\cP) = w(\cP_{i,u})\xi w(\cP_{i,u+1})\xi \cdots \xi w(\cP_{i,k_i})
\]
for some $u\geq 2$, then
\[
 \bar{\sigma}(w(\cP')\xi) \leq \bar{\sigma}(w(\cP_{i,1})\xi \cdots \xi w(\cP_{i,u-1})\xi)
\]
for all $\cP'\in \overline{\mathcal{P}}_1$ with $[\cP:\cP']=1$.
\end{enumerate}
\end{definition}

 Note that the base partition $\mathcal{T}$ is always admissible, as each $k_i=1$ and there is nothing to check.

\section{Proof of main theorem}
\label{sec:main_theorem}
We prove the main theorem in two stages. The first stage focusses on making the score of the category be in the correct form. The second stage makes the necessary further modifications so that moduli spaces not seen by the score are also of the correct form.

Specifically, in Section \ref{sec:almostBH} we show how to reduce to \emph{almost Baues--Hennes form}. 

\begin{definition}
Let $\cC$ be a framed flow category in Chang form. We say that $\cC$ is in \emph{almost Baues--Hennes form}, if its score $\Sigma(\cC)$ agrees with the score $\Sigma(\cC')$ of a framed flow category $\cC'$ which is the disjoint union of Moore, Chang, and Baues--Hennes flow categories. 
\end{definition}

(Note that for almost Baues--Hennes form we do not require Baues--Hennes flow categories for $\varepsilon$-words, as $2$-dimensional moduli spaces are not recognized by the score. For similar reasons, cyclic words need not be special in almost Baues--Hennes form.)

Once the framed flow category is in almost Baues--Hennes form, we describe in Section \ref{sec:fullBH} how to make the further reductions to complete the proof of our main theorem.

\subsection{Reduction to almost Baues-Hennes form}\label{sec:almostBH}

\begin{proposition}
	\label{prop:almost_BH}
	Let $\cC$ be a framed flow category in Chang form. Then $\cC$ is move equivalent to a framed flow category $\cC'$ in almost Baues--Hennes form.
\end{proposition}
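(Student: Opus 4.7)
The plan is to carry out an induction that, starting from the admissible base partition $\mathcal{T}$ of $\cC$, repeatedly simplifies the incidence matrix $A(\cC,\mathcal{T})$ using the row and column operations of Propositions~\ref{prop:duking_one}, \ref{prop:duking_two} and~\ref{prop:duking_three}. Each of these propositions modifies $\cC$ to a move equivalent $\cC'$ with the same object set, keeping $\mathcal{T}$ as a partition throughout, so the matrix $A(\cC,\mathcal{T})$ is a well-defined invariant of the process. The target is a matrix whose non-zero sub-blocks are indecomposable over $\F_2$, with zero rows and columns corresponding to basic, central, or $\varepsilon$ Baues-Hennes parts and indecomposable blocks corresponding to cyclic Baues-Hennes parts.

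First I would pick a $\preccurlyeq$-maximal $\cP_1\in\mathcal{T}_3^\cC$ and, among columns with $[\cP_1:\cP']=1$, a $\trianglelefteq$-minimal $\cP_1^*\in\overline{\mathcal{T}}_1$. The strategy is to isolate the entry $[\cP_1:\cP_1^*]$ by clearing the rest of its row and column. For every $\cP\prec\cP_1$ with $[\cP:\cP_1^*]=1$ I would apply Proposition~\ref{prop:duking_one}(2) to eliminate that entry, and dually for every $\cP'\triangleright\cP_1^*$ with $[\cP_1:\cP']=1$ I would apply Proposition~\ref{prop:duking_two}(2) or~\ref{prop:duking_three}(2). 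The extremal choice of $\cP_1$ and $\cP_1^*$ ensures the required strict-inequality hypotheses of these propositions. Among the rows $\approx$-equivalent to $\cP_1$ (and columns $\approx$-equivalent to $\cP_1^*$) I would use the $(1)$-parts of the propositions, which perform symmetric row-and-column operations; after finitely many such steps the maximal equivalence block at $([\cP_1],[\cP_1^*])$ is reduced to a block-diagonal matrix whose non-zero blocks are indecomposable over $\F_2$, and the corresponding groups of parts form cyclic Baues-Hennes flow categories while any cleared row lets us remove $\cP_1$ from $\mathcal{T}_3^\cC$.

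The main obstacle is the exceptional case in which $w(\cP)=w(\cP_1)\xi u$ for some $\xi u\in\Xi$ (and the dual obstacle $w(\cP_1^*)=v\xi w(\cP')$), where the relevant reduction proposition does not apply directly. This is precisely the situation to which Construction~\ref{rem:no_worries} is tailored: one temporarily splits the offending part at an interior $\xi$-edge into two smaller parts, runs the reduction on the new pieces, then reassembles. The admissibility of $\mathcal{T}$ is exactly the hypothesis required to justify the auxiliary cleanup moves invoked after the split, and these cleanups preserve admissibility of the partition on the modified category, so the induction hypothesis is maintained. Since $|\mathcal{T}_3^\cC|$ strictly decreases at each major step, the induction terminates in finitely many moves. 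The resulting score, by construction, matches that of a disjoint union of Moore, Chang and Baues-Hennes flow categories, which is the definition of almost Baues-Hennes form.
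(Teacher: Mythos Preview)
Your overall strategy---pick an extremal pivot, clear its row and column using Propositions~\ref{prop:duking_one}, \ref{prop:duking_two}, \ref{prop:duking_three}, handle the diagonal $\approx$-block via an $\F_2$-normal form, and induct---is the same as the paper's. However, two points in your write-up do not work as stated.

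First, your choice of extremum for $\cP_1^*$ is reversed. You take $\cP_1^*$ to be $\trianglelefteq$-\emph{minimal} among $\{\cP':[\cP_1:\cP']=1\}$ and then want to clear each remaining entry $[\cP_1:\cP']$ with $\cP'\triangleright\cP_1^*$. To clear such an entry one must add row $\cP_1^*$ to row $\cP'$, that is, apply $E_\omega^{\cP',\cP_1^*}$. But Proposition~\ref{prop:duking_two}(2) only permits $E_\omega^{\cP_1,\cP_2}$ when $\bar\sigma(w(\cP_1)\xi)<\bar\sigma(w(\cP_2)\xi)$, so one would need $\bar\sigma(w(\cP')\xi)<\bar\sigma(w(\cP_1^*)\xi)$, contradicting $\cP_1^*\trianglelefteq\cP'$. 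The correct choice (and the paper's) is to take $\cP_{\omax}'$ \emph{maximal} for $\trianglelefteq$; then every other $\cP'$ satisfies $\cP'\trianglelefteq\cP_{\omax}'$ and the row operation is legal.

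Second, keeping the base partition $\mathcal{T}$ fixed throughout does not give a terminating induction on $|\mathcal{T}_3^\cC|$. After you isolate the pivot entry $[\cP_1:\cP_1^*]=1$, the column of $\cP_1$ is still nonzero, so $\cP_1$ remains in $\mathcal{T}_3^\cC$ and nothing decreases. The paper resolves this by \emph{combining} $\cP_{\max}$ and $\cP_{\omax}'$ into a single new part $\cP_{\new}$, thereby producing a new partition $\mathcal{P}'$ with strictly fewer elements in $(\mathcal{P}')_3^{\cC'}$. It is precisely for these enlarged parts---whose words now contain interior $\xi$'s---that the exceptional hypotheses $w(\cP_1)=w(\cP_2)\xi u$ in Propositions~\ref{prop:duking_one}--\ref{prop:duking_three} can occur, which is why Construction~\ref{rem:no_worries} and the admissibility conditions are needed. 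If you never enlarge parts, those exceptional cases never arise and your invocation of admissibility is vacuous; but then your induction does not terminate. You need to combine parts as in the paper and then verify that the resulting partition is still admissible, which is the substantive part of the argument.
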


The proof of Proposition \ref{prop:almost_BH} is the most technical argument in the article, so we provide an outline before proceeding. The overarching idea is that we start with a framed flow category $\cC$ in Chang form together with a partition $\mathcal{P}$, and from this we wish to produce a new partition with fewer parts. This process is then repeated until we cannot reduce any more. By the way we reduce, it will be clear that at termination the category is in almost Baues-Hennes form.

In more detail, the main inductive step is as follows. Suppose there is a part $\cP$ of $\mathcal{P}$ which has $\xi$-edges coming out of $\alpha(\cP)$. There may be multiple $\xi$-edges coming out, and they may go into multiple other parts of the partition. We want to repeatedly apply Propositions \ref{prop:duking_one}, \ref{prop:duking_two} and \ref{prop:duking_three} in order that  afterwards there is exactly one $\xi$-edge coming out of $\alpha(\cP)$, and thus only going into $\omega(\cP')$ for a unique other part $\cP'$ of $\mathcal{P}$. Propositions \ref{prop:duking_one}, \ref{prop:duking_two} and \ref{prop:duking_three} will also be applied to ensure that no other $\xi$-edges go into $\omega(\cP')$. With this achieved, we will redefine the partition $\mathcal{P}$ so that $\cP$ and $\cP'$ are combined into a new and bigger part, thus reducing the number of parts in the partition. (This should be compared with the proof of Theorem \ref{thm:chang}, where a similar, but much simpler, version of the argument was employed to systematically eradicate multiple unwanted $\eta$ edges.)

A danger when applying Propositions \ref{prop:duking_one}, \ref{prop:duking_two}, and \ref{prop:duking_three} to eradicate $\xi$-edges, as suggested above, is that it could lead to the unwanted breaking up of other parts of the partition, defeating the ultimate point of the process which is to reduce the number of parts. The preorders of Section \ref{sec:preorders} were introduced to guide a systematic approach that prevents this unwanted outcome. Precisely, we will always choose $\cP$  maximal with respect to $\preccurlyeq$, and will always choose $\cP'$ maximal with respect to $\trianglelefteq$. If this choice gives $\cP\not\approx \cP'$, we proceed to eradicate $\xi$-edges as described in the previous paragraph, then combine $\cP$ and $\cP'$ into one larger part as desired. We would then move to the next step of the induction.

However, if we have $\cP\approx \cP'$, we need to do something different. In this case we consider the restriction of the incidence matrix $A(\cC,\mathcal{P})$ (Definition \ref{def:incidence}) to the parts in the equivalence class of $\cP$. This gives a square matrix that can be put into a primary decomposition, with blocks that are either Jordan blocks with $0$ on the diagonal, or invertible matrices with irreducible characteristic polynomial.

Invertible matrices with irreducible characteristic polynomial give rise to a cyclic Baues--Hennes flow category that we can isolate from the rest of the flow category. With this cyclic flow category put aside, we then focus on the remainder of the flow category and continue to the next step of the induction.

Jordan blocks with $0$ on the diagonal mean that we need to choose another part $\cP''$ and produce a new part $\cP_{\new}$ with $w(\cP_{\new})=w(\cP'')\left(\xi w(\cP)\right)^k$ where $k$ is the size of the Jordan block. This gives rise to a new admissible partition with fewer parts, and we can argue with induction.


\begin{proof}[Proof of Proposition \ref{prop:almost_BH}]
Consider the collection of pairs $(\cC,\mathcal{P})$, where $\cC$ is a framed flow category in Chang form and $\mathcal{P}$ is an admissible partition. Given such a pair, we claim that $\cC$ is move equivalent to a framed flow category in almost Baues--Hennes form.

The proof is by induction on the number of parts in $\mathcal{P}^\cC_3$. If $\mathcal{P}^\cC_3$ is empty, $\cC$ is in almost Baues--Hennes form where $\cC'$ is the disjoint union of the parts\footnote{The dual parts from $\mathcal{P}_1^T$ and $\mathcal{P}_2^T$ need to be combined to a central Baues--Hennes flow category.} in $\mathcal{P}$.

Let $(\cC,\mathcal{P})$ be a pair where $\cC$ is a framed flow category in Chang form and $\mathcal{P}$ is an admissible partition. In $\mathcal{P}_3^\cC$ let $\cP_{\max}$ be maximal with respect to $\preccurlyeq$, and let $\cP_{\omax}'$ be maximal with respect to $\trianglelefteq$ among 
\[
 C'=\{\cP'\in \overline{\mathcal{P}}_1 \,|\, \mbox{there exists }\cP\approx \cP_{\max}\mbox{ with }[\cP:\cP']=1\}
\]
If there is a $\cP'\in C'$ with $\cP'\approx \cP_{\omax}'$ with $\cP'\in \mathcal{P}_3^\cC$, we choose $\cP_{\omax}'$ to also be in $\mathcal{P}_3^\cC$.
After possibly renaming $\cP_{\max}$ we can assume $[\cP_{\max}:\cP_{\omax}']=1$.

Write
\[
 w(\cP_{\max}) = w(\cP_1)\xi \cdots \xi w(\cP_k)
\]
with $k\geq 1$ and $\cP_i\in \mathcal{T}$, and
\[
 w(\cP_{\omax}') = w(\cP_1')\xi \cdots \xi w(\cP_l')
\]
with $l\geq 1$ and $\cP_j'\in \mathcal{T}$.
\subsection*{Case 1}Assume $\cP_{\omax}'\not\approx \cP_{\max}$. Note that for 
\[
 w_u = w(\cP_u)\xi \cdots \xi w(\cP_k)
\]
with $u\geq 2$ we have $\sigma(\xi w(\cP_{\max})) < \sigma(\xi w_u)$, since we get $\leq$ from the admissibility condition, and we cannot have $=$ as $w_u$ is a proper subword of $w(\cP_{\max})$. In particular there is no $\cP\in \mathcal{P}_3^\cC$ with $w(\cP)=w_u$ by maximality of $\cP_{\max}$.

Suppose there exists $\cP\in \mathcal{P}_3^\cC$ satisfying $[\cP:\cP_{\omax}']=1$ and $\cP\not=\cP_{\max}$. Then $\cP\preccurlyeq \cP_{\max}$ and by Proposition \ref{prop:duking_one}  we get a move equivalent flow category $\cC'$ such that $\mathcal{P}$ is a partition of it and 
\[
A(\cC',\mathcal{P}) = \left\{ \begin{array}{cl}
                                E_\omega^{\cP_{\max},\cP} A(\cC,\mathcal{P}) E_\alpha^{\cP_{\max},\cP} & \mbox{if }\cP_{\max},\cP\in \mathcal{P}_{3,1}\mbox{ and }\cP_{\max}\approx \cP\\
                                A(\cC,\mathcal{P}) E_\alpha^{\cP_{\max},\cP} & \mbox{else}
                              \end{array}
\right.
\]
Note that if $w(\cP_{\max})= w(\cP)\xi w_u$ for some $u\geq 2$ we are in the situation of Construction \ref{rem:no_worries}. Indeed, we may have $w(\cP)= w(\cP^\ast)\xi w_{u'}$ for some $\cP^\ast \in \mathcal{P}_3^\cC$ and $u'>u$. Applying Construction \ref{rem:no_worries} with $\cP_1 = \cP_{\max}$ and $\cP_2 = \cP$ leads to potentially a few unwanted $\xi$-edges going into an object of $\cP_{\max}$, but because of the assumed admissibility condition we can remove them again.

We now have $[\cP:\cP_{\omax}']_{\cC'}=0$. Note that $\mathcal{P}$ is still admissible, as $\mathcal{P}_3^{\cC'} \subset \mathcal{P}_3^\cC$ with the only possible change that $\cP$ may no longer be in $\mathcal{P}_3^{\cC'}$. We can repeat this argument until $[\cP:\cP_{\omax}']=1$ if and only if $\cP = \cP_{\max}$. To avoid overusing primes, we shall call the current flow category $\cC$.

Now assume that $[\cP_{\max}:\cP']_\cC=1$ for some $\cP'\in \overline{\mathcal{P}}_1\setminus \{\cP_{\omax}'\}$. Using Proposition \ref{prop:duking_two} or \ref{prop:duking_three} with $\cP'\trianglelefteq \cP_{\omax}'$ and possibly a dual argument as in Construction \ref{rem:no_worries} we get a new flow category $\cC'$ such that $[\cP_{\max}:\cP']_{\cC'}=0$. Let us elaborate slightly on this dual argument. Assume that
\[
 w(\cP_{\omax}')= w(\cP_1')\xi \cdots \xi w(\cP_{u-1}')\xi w(\cP')
\]
for some $u\leq l$. By the admissibility condition (A2) we get
\[
 \bar{\sigma}(w(\cP'')\xi) \leq \bar{\sigma}(w(\cP_1')\xi \cdots \xi w(\cP_{u-1}')\xi)
\]
for all $\cP''$ with $[\cP':\cP'']=1$. As in Construction \ref{rem:no_worries} we can temporarily break $\cP_{\omax}'$ and deal with the new $\xi$-edges. Again we may have to iterate the argument if $w(\cP')=v \xi w(\cP''')$ for some $v$ and $\cP'''$.

If $\cP_{\omax}'\approx \cP'$ we may get extra $\xi$ edges going into $\omega(\cP_{\omax}')$, or $\omega(\cP_{\omax}^{\text{dual}})$ if we invoke Proposition \ref{prop:duking_three}, but because of the equivalences of the involved parts the admissibility condition still holds.

If $\cP_{\omax}',\cP'\in \mathcal{P}_1^T\cup \mathcal{P}_2^T$ with $w(\cP_{\omax}')=w(\cP')$ and $\tau(\cP_{\omax}')=\tau(\cP')$, we may have $w(\cP^{\text{dual}})=w(\cP^{\text{dual}}_{\omax})\xi u$ for some basic word $u$, and where $\cP^{\text{dual}}$ is the dual of $\cP'$, and $\cP^{\text{dual}}_{\omax}$ is the dual of $\cP_{\omax}'$. Again we temporarily break $\cP^{\text{dual}}$ and remove any extra $\xi$-edges using the admissibility condition (A1).

With the same arguments as before we see that $\mathcal{P}$ is admissible with respect to $\cC'$. Repeating this step leads to a flow category $\cC''$ with $[\cP_{\max}:\cP']_{\cC''}=1$ if and only if $\cP'=\cP_{\omax}'$, and $[\cP:\cP_{\omax}']_{\cC''}=1$ if and only if $\cP = \cP_{\max}$, and $\mathcal{P}$ is admissible with respect to $\cC''$.

We now combine the parts $\cP_{\max}$ and $\cP_{\omax}'$ to a new part $\cP_{\new}$ with $w(\cP_{\new})=w(\cP_{\omax}')\xi w(\cP_{\max})$ and get a new partition $\mathcal{P}'$ where $\cP_{\max}$ and $\cP_{\omax}'$ are replaced by $\cP_{\new}$ and the other parts remain. 

\begin{lemma}
 The new partition $\mathcal{P}'$ is an admissible partition of $\cC''$.
\end{lemma}

\begin{proof}
First observe that if $\cP_{\new}\in (\mathcal{P}_3')^{\cC''}$, then 
\begin{equation}\label{eq:pnewless}
\sigma(\xi w(\cP_{\new})) < \sigma(\xi w(\cP_{\max})).
\end{equation}
For otherwise we would have $w(\cP_{\new}) = w(\cP_{\max})\xi u$ for some $u$ by the maximality of $\cP_{\max}$ with respect to $\mathcal{P}_3^\cC$. Again by the maximality of $\cP_{\max}$ this implies $w(\cP_{\max})=w(\cP_{\omax}')\xi u'$ for some $u'$. 
But by the admissibility condition for the original $\cC$ we have $\sigma(\xi u')\geq \sigma(\xi w(\cP_{\max}))$, but since $\xi u'$ is a proper subword of $w(\cP_{\max})$, we cannot have equality. Hence 
\[
\sigma(\xi w(\cP_{\omax}')\xi w(\cP_{\max})) < \sigma(\xi w(\cP_{\omax}')\xi u')=\sigma(\xi w(\cP_{\max})), 
\]
and therefore $w(\cP_{\new})\not=w(\cP_{\max})\xi u$.

Now let $\cP_i\in \mathcal{P}'$ be different from $\cP_{\new}$. Then already $\cP_i\in \mathcal{P}$, and if $\cP\in (\mathcal{P}'_3)^{\cC''}$, we get condition (A1) either from admissibility of $\mathcal{P}$, or from (\ref{eq:pnewless}) for $\cP=\cP_{\new}$.

To see that $\cP_i$ satisfies condition (A2), assume (\ref{eq:wordcpi}) and let $\cP\in (\mathcal{P}'_3)^{\cC''}$ satisfy $w(\cP)=w(\cP_{i,u})\xi\cdots w(\cP_{i,k_i})$. Since $\cP_i, \cP_{\max}\in \mathcal{P}$, admissibility of $\mathcal{P}$ implies $\sigma(\xi w(\cP)) \geq \sigma(\xi w(\cP_{\max}))$ (by assumption $\cP$ matches a truncation of $\cP_i$). If we had $\cP = \cP_{\new}$, this would contradict (\ref{eq:pnewless}). Therefore we must have $\cP\in \mathcal{P}$ and then maximality of $\cP_{\max}$ implies $\cP\approx \cP_{\max}$.

Now let $\cP'\in \overline{\mathcal{P}'}_1$ be as in (A2). If $\cP'\not=\cP_{\new}$, then $\cP'\in \mathcal{P}$ and admissibility of $\mathcal{P}$ gives
\[
\bar{\sigma}(w(\cP')\xi) \leq \bar{\sigma}(w(\cP_{i,1})\xi \cdots \xi w(\cP_{i,u-1})\xi).
\]
In the case that  $\cP'=\cP_{\new}$ we have $[\cP:\cP_{\new}]=1$ which implies $[\cP:\cP_{\max}]=1$ in the original setting. Since we assume $\cP_{\omax}'\not\approx \cP_{\max}$ this implies 
\[
\bar{\sigma}(w(\cP_{\max})\xi) < \bar{\sigma}(w(\cP_{\omax}')\xi)\leq \bar{\sigma}(w(\cP_{i,1})\xi \cdots \xi w(\cP_{i,u-1})\xi).
\]
If $\bar{\sigma}(w(\cP_{i,1})\xi \cdots \xi w(\cP_{i,u-1})\xi) < \bar{\sigma}(w(\cP_{\new}))$ then $w(\cP_{\omax}')=v\xi w(\cP_{\max})$ which implies $\bar{\sigma}(w(\cP_{\omax}')\xi) \leq \bar{\sigma}(v\xi)$ by the original admissibility condition. Since $v$ is shorter than $w(\cP_{\omax}')$ we get $\bar{\sigma}(w(\cP_{\omax}')\xi w(\cP_{\max})\xi) \leq \bar{\sigma}(v\xi w(\cP_{\max})\xi)$ which contradicts $\bar{\sigma}(w(\cP_{i,1})\xi \cdots \xi w(\cP_{i,u-1})\xi) < \bar{\sigma}(w(\cP_{\new}))$.


It remains to show that (A1) and (A2) also hold for $\cP_{\new}$. First note that
\[
 w(\cP_{\new}) = w(\cP_1')\xi \cdots \xi w(\cP_l') \xi w(\cP_1)\xi \cdots \xi w(\cP_k).
\]
Condition (A1) is satisfied since $\sigma(\xi w(\cP_{\new})) < \sigma(\xi w(\cP_{\max}))$ if $\cP_{\new}\in (\mathcal{P}_3')^{\cC''}$. To see condition (A2) we claim that there is no $\cP\in (\mathcal{P}_3')^{\cC''}$ with
\[
 w(\cP) = w(\cP_u')\xi \cdots \xi w(\cP_l') \xi w(\cP_1)\xi \cdots \xi w(\cP_k)
\]
for some $u\in \{2,\ldots,l\}$. Clearly such $\cP$ cannot be $\cP_{\new}$, so we would already have $\cP\in \mathcal{P}_3^\cC$. But then $\cP_{\omax}'$ would violate (A1) of the admissibility condition of $\mathcal{P}$ with respect to $\cC$, as 
\[
 \sigma(\xi w(\cP)) > \sigma(\xi w(\cP_u')\xi \cdots \xi w(\cP_l')).
\]
If $\cP\in (\mathcal{P}_3')^{\cC''}$ satisfies
\[
 w(\cP) = w(\cP_u)\xi \cdots \xi w(\cP_k)
\]
for some $u\in \{2,\ldots,k\}$, we get
\begin{align*}
 \bar{\sigma}(w(\cP')\xi) &\leq \bar{\sigma}(w(\cP_1)\xi \cdots \xi w(\cP_{u-1})\xi) \\
 &< \bar{\sigma}(w(\cP_1)\xi \cdots \xi w(\cP_l') \xi w(\cP_1)\xi \cdots \xi w(\cP_{u-1})\xi)
\end{align*}
for all $\cP'\in \overline{\mathcal{P}}_1$ with $[\cP:\cP']=1$ from the old admissibility condition.

If $\cP\in (\mathcal{P}_3')^{\cC''}$ satisfies $\cP\approx \cP_{\max}$ (that is, $u=1$ above), we have
\[
 \bar{\sigma}(w(\cP')\xi) \leq \bar{\sigma}(w(\cP_1)\xi \cdots \xi w(\cP_l') \xi) = \bar{\sigma}(w(\cP_{\omax}')\xi)
\]
for all $\cP'\in \overline{\mathcal{P}}_1$ with $[\cP:\cP']=1$ by maximality of $\cP_{\omax}'$.
\end{proof}

The partition $\mathcal{P}'$ has fewer elements in $(\mathcal{P}_3')^{\cC''}$, so we can apply induction.

\subsection*{Case 2}Assume that $\cP_{\max} \approx \cP_{\omax}'$. Write $[\cP_{\max}]=\{\cP_1,\ldots,\cP_r\}$ for the equivalence class. 

Let $A$ be the matrix with entries $A_{j,i}=[\cP_i:\cP_j]$ for $i,j\in \{1,\ldots,r\}$. Using the primary decomposition \cite[\S 10.6, Thm.3]{MR0360046} we can get this matrix similar to one in the form
\[
 \left(\begin{array}{ccc}
        B_1 & 0 & 0 \\[-0.2cm]
        0 & \ddots & 0 \\
        0 & 0 & B_u
       \end{array}
\right)
\]
where each $B_j$ has characteristic polynomial $(p_j(x))^{k_j}$ with each $p_j(x)$ irreducible. Furthermore, if $p_j(x)=x^{k_j}$ we can assume that $B_j$ is a Jordan block with $0$ on the diagonal. By Proposition \ref{prop:duking_one} (1) we can assume that $A$ has this form, and as in Case 1 we get that $\mathcal{P}$ is still admissible.

If some of the $\cP_i$ are in $\mathcal{P}_3\setminus \mathcal{P}_3^\cC$, the $i$-th column in $A$ just consists of $0$, and we can perform the similarities so that $\cP_i$ corresponds to the $0$-column of a Jordan block. In particular we have $\mathcal{P}_3^{\cC'}\subset \mathcal{P}_3^\cC$ for the new flow category $\cC'$.

\subsubsection*{Case 2a} Assume that $p_j(1)=1$ for some $j$. Then the matrix $B_j$ is invertible, and after the decomposition into cyclic spaces \cite[\S 10.6, Thm.2]{MR0360046} we may assume that $B_j$ is indecomposable. Furthermore, by \cite[\S 11.3]{MR0360046} we may assume that
\[
 B_j = \left(\begin{array}{ccccc}
    a_1 & 1 & 0 & \cdots & 0 \\[-0.1cm]
    a_2 & 0 & \ddots & \ddots & \vdots \\[-0.1cm]
    \vdots & \vdots & \ddots & \ddots & 0\\[-0.1cm]
    a_{n_j-1} & \vdots & & \ddots & 1 \\[0.1cm]
    a_{n_j} & 0 & \cdots & \cdots & 0
   \end{array}
\right)
\]
with $a_{n_j}=1$. Denote by $\cP^j_1,\ldots,\cP^j_{n_j}$ the parts so that $[\cP^j_i:\cP^j_k]$ is the $(k,i)$-entry in the matrix $B_j$. 

Consider a $\cP'$ with $\cP'\vartriangleleft \cP_{\max}\approx \cP^j_i$. We want to have $[\cP^j_i:\cP']=0$ for all $i=1,\ldots,n_j$. Using Proposition \ref{prop:duking_two} (2) with $\cP' \vartriangleleft \cP^j_{n_j-1}$, we get $[\cP^j_{n_j}:\cP']=0$. Notice that $\bar{\sigma}(w(\cP'))<\bar{\sigma}(w(\cP^j_{n_j-1}))$, since $\cP^j_{n_j-1}\notin \mathcal{P}_{1,1}\cup \mathcal{P}_{2,1}$. Inductively we can achieve $[\cP^j_m:\cP']=0$ for all $m=2,\ldots,n_j$, and to achieve $[\cP^j_1:\cP']=0$ use Proposition \ref{prop:duking_two} with $\cP' \vartriangleleft \cP^j_{n_j}$.

Now consider a $\cP$ with $\cP^j_i \approx \cP_{\max} \prec \cP$. Again we want to get $[\cP:\cP^j_i]=0$ for all $i$. This is achieved similarly, using Proposition \ref{prop:duking_one}. The moves are done in the same way as in Case 1, so $\mathcal{P}$ is still admissible with respect to the new flow category.

Notice that the score of the parts $\cP^j_1,\ldots,\cP^j_{n_j}$ is now completely isolated from the score of the rest of the flow category, and the full subcategory of these objects is a cyclic Baues--Hennes flow category $\B(\xi w(\cP_{\max}),B_j)$. We can treat the rest of the flow category as a full subcategory $\cC''$ with admissible partition $\mathcal{P}''$, and since there are fewer elements in $(\mathcal{P}''_3)^{\cC''}$ than in $\mathcal{P}_3^\cC$, we can apply induction.

\subsubsection*{Case 2b} We now assume that each $B_j$ is a Jordan block with $0$ on the diagonal.

Again denote by $\cP^j_1,\ldots,\cP^j_{n_j}$ the parts so that $[\cP^j_i:\cP^j_k]$ is the $(k,i)$-entry in the matrix $B_j$. Similarly to Case 2a we can achieve that $[\cP^j_i:\cP']=0$ for all $\cP' \vartriangleleft \cP^j_i$, where $i\geq 2$, and $[\cP:\cP^j_k]=0$ for all $\cP$ with $\cP^j_k \prec \cP$, where $k\leq b_j-1$. Note that unlike in Case 2a, we may still have $\cP'$ with $[\cP^j_1:\cP']=1$ and $\cP$ with $[\cP:\cP^j_{n_j}]=1$.

It is tempting to combine the parts $\cP^j_1,\ldots,\cP^j_{n_j}$ into a part $\cP^j$, but this need not result in an admissible partition. Instead we focus on the $j$ with $n_j$ maximal. There has to be a $n_j>1$ for otherwise we would not have $\cP_{\max}\approx \cP_{\omax}'$.

So let $n_{\max}\geq 2$, and after reordering we may assume $n_j=n_{\max}$ for $j=1,\ldots,s$ and some $s\geq 1$. If there is no $\cP'$ with $[\cP^j_1:\cP']=1$ for any $j=1,\ldots,s$, we can simply combine the parts $\cP^1_1,\ldots,\cP^1_{n_1}$ into a part $\cP_{\new}$ with 
\[
 w(\cP_{\new}) = w(\cP_{\max})\xi \cdots \xi w(\cP_{\max})
\]
and we obtain a new admissible partition with fewer parts.

Otherwise let $\cP_{\omax}$ be maximal with respect to $\vartriangleleft$ among those $\cP'$ that satisfy $[\cP^j_1:\cP']=1$ for some $j\in \{1,\ldots,s\}$. After possibly reordering, we may assume $[\cP^1_1:\cP_{\omax}]=1$. Call the current flow category $\cC'$.

%

The goal is to combine $\cP^1_1,\ldots,\cP^1_{n_1}$ with $\cP_{\omax}$ to a new part $\cP_{\new}$ satisfying
\[
 w(\cP_{\new})=w(\cP_{\omax})\xi w(\cP_{\max})\xi \cdots \xi w(\cP_{\max}).
\]
Since we have $\cP_{\max}\not\approx \cP_{\omax}$, we can argue similarly to Case 1. We first want to get that $[\cP:\cP_{\omax}]=1$ if and only if $\cP=\cP^1_1$. This is done using Proposition \ref{prop:duking_one} as in Case 1. Note that if $[\cP^j_1:\cP_{\omax}]=1$ for some $j\geq 2$, we get this to be $0$ using Proposition \ref{prop:duking_one} (1), but possibly at the cost of creating extra $\xi$-edges going into $\omega(\cP^1_1)$. Using the maximality of $n_1$ these can be removed again.
After finitely many steps we can assume that $[\cP:\cP_{\omax}]=1$ if and only if $\cP=\cP^1_1$.

We now want to have $[\cP^1_1:\cP']=1$ if and only if $\cP'=\cP_{\omax}$. But this can be done using the maximality of $\cP_{\omax}$. We can now obtain the new part $\cP_{\new}$, and with the remaining parts of $\mathcal{P}$ this leads to a new partition $\mathcal{P}'$ of the flow category we now call $\cC''$. The proof that $\mathcal{P}'$ is admissible for $\cC''$ is essentially the same as in Case 1, in fact the arguments for showing property (A2) slightly simplify.

As $(\mathcal{P}'_3)^{\cC''}$ has fewer elements than the original $\mathcal{P}_3^\cC$, this finishes the proof of Proposition \ref{prop:almost_BH}.
\end{proof}

\subsection{Reduction to Baues--Hennes form}\label{sec:fullBH}

Recall that a framed flow category $\cC$ of homological width $3$ is in Baues--Hennes form, if it is a disjoint union of elementary Moore flow categories, Chang flow categories and Baues--Hennes flow categories of special words. It remains to show that a framed flow category in almost Baues--Hennes form can be turned into Baues--Hennes form.

\begin{lemma}\label{lem:spec_cyc}
	Let $(w,A)$ and $(w',A')$ be equivalent cyclic words. Then $\B(w,A)$ is move equivalent to $\B(w',A')$.
\end{lemma}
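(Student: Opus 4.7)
The plan is to decompose the equivalence into two basic operations and handle each by moves: (i) matrix conjugation $A \mapsto B^{-1} A B$ with $w$ fixed, and (ii) cyclic permutation $w \mapsto w^{(1)}$ by a single four-letter block, which induces a compatible transformation of $A$. A general equivalence is then obtained by composing the two.

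For (i), since $GL_m(\F_2)$ is generated by elementary transvections $T_{ij} = I + E_{ij}$ (for $i \neq j$), and $T_{ij}^{-1} = T_{ij}$ over $\F_2$, it suffices to realize $A \mapsto T_{ij} A T_{ij}$. I would perform $(+)$-handle slides of $x_\ell^i$ over $x_\ell^j$ for every $\ell = 1, \ldots, 4k$. Each individual slide introduces spurious cross-strand edges, but since strands $i$ and $j$ realize identical Baues--Hennes flow categories of the subword $w_2 \cdots w_{4k}$, these spurious edges cancel pairwise over $\F_2$ once all $4k$ slides are performed. The surviving effect comes only from the $x_1^i$-slide (adding row $j$ to row $i$ of $A$) and the $x_{4k}^i$-slide (adding column $i$ to column $j$), which together produce $T_{ij} A T_{ij}$. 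Any residual discrepancy in $2$-dimensional moduli spaces is corrected by Lemma~\ref{lem:def_BH} adapted to the cyclic setting.

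For (ii), the key observation is that invertibility of $A$ over $\F_2$ gives $\det A \equiv 1$, and the determinant expansion $\det A = \sum_\pi \prod_i A_{i, \pi(i)}$ guarantees a permutation $\pi \in S_m$ with $A_{i, \pi(i)} = 1$ for all $i$. This $\pi$ furnishes a relabeling of the objects of $\B(w, A)$ that exhibits the same underlying framed flow category as $\B(w^{(1)}, A'')$ for some matrix $A''$ computed from $A$ and $\pi$: the old internal $\xi$-edges between subword positions $5$ and $4$ of each strand $j$ are matched with the old inter-strand $\eta$-edges $\M(x_1^{\pi(j)}, x_{4k}^j)$, and these matched pairs play the role of the new internal $\xi$-edges of the shifted word.

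The main obstacle will be the bookkeeping in step (ii): after the naive relabeling, entries of $A$ not covered by $\pi$ produce extra $\eta$-edges at cross-strand positions not permitted by the Baues--Hennes structure of $(w^{(1)}, A'')$. These must be cleaned up using handle slides (as in step (i)) and extended Whitney tricks, relying on the $\F_2$ cancellation of framed circles; verifying that the cleanup terminates without reintroducing anomalies requires careful tracking of how the relabeled moduli spaces match the required Baues--Hennes data. Once both steps are established, the lemma follows by first applying cyclic permutations to align $w$ with $w'$, and then conjugations to align $A$ with $A'$.
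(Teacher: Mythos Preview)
Your part (i) is essentially correct and matches the paper's approach: conjugation of $A$ by an elementary transvection is realized by the moves of Proposition~\ref{prop:duking_one}(1), which is exactly the sequence of handle slides you describe along the two identical strands.

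Your part (ii), however, has a genuine gap. After relabeling via a permutation $\pi$ with $A_{i,\pi(i)}=1$, the resulting category is \emph{not} a cyclic Baues--Hennes flow category for any $(w^{(1)},A'')$: at the position where you want the new internal $\xi$-edge of each strand you instead get a full matrix $C$ of cross-strand $\xi$-edges with $C_{ij}=A_{i,\pi(j)}$. This has $1$'s on the diagonal, but the off-diagonal entries are cross-strand edges at an \emph{internal} position, violating the definition. You acknowledge this obstacle but give no method to remove them; ``handle slides and extended Whitney tricks'' is not a proof, and the difficulty is real. In particular you cannot fix it by first conjugating $A$ to a permutation matrix, since not every element of $GL_m(\F_2)$ is similar to one (the companion matrix of $x^2+x+1$ in $GL_2(\F_2)$ has order $3$, and there is no order-$3$ permutation matrix in $GL_2$).

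The paper circumvents this by a different reduction. It first uses part (i) to bring an indecomposable $A$ to \emph{companion form} $B$: a cyclic-shift permutation matrix plus extra entries confined to a single column (the first), with $a_k=1$. It then splits each strand at one fixed internal $\xi$-letter into two parts $\cP_i',\cP_i''$, and for each extra entry $a_i=1$ (with $i<k$) applies Proposition~\ref{prop:duking_two}(1) to the pair $(\cP_i,\cP_k)$: this transfers the offending $\xi$-edge from the cyclic position to the split position. Because all extras sit in one column and $a_k=1$ is always available as a partner, these moves can be done one at a time without interference. Recombining the split parts in the shifted pattern then yields exactly $\B(\xi w(\tilde{\cP}_0),B)$ for the cyclically permuted word, with the \emph{same} matrix $B$. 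The companion-form reduction is what makes the cleanup tractable; without an analogous structural reduction, your step (ii) does not go through.
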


\begin{proof}
By Proposition \ref{prop:duking_one} we can replace the matrix $A$ by any matrix $B$ it is similar to. In particular, if $A$ is decomposable, we can decompose $\B(w,A)$ as well so we may as well assume that $A$ is indecomposable.

Since $A$ is indecomposable, we can think of it as a cyclic endomorphism in the sense of \cite[\S 11.3]{MR0360046}, and furthermore $A$ is similar to a matrix $B$ of the form
\[
 B=\left(\begin{array}{ccccc}
    a_0 & 1 & 0 & \cdots & 0 \\[-0.1cm]
    a_1 & 0 & \ddots & \ddots & \vdots \\[-0.1cm]
    \vdots & \vdots & \ddots & \ddots & 0\\[-0.1cm]
    a_{k-1} & \vdots & & \ddots & 1 \\[0.1cm]
    a_k & 0 & \cdots & \cdots & 0
   \end{array}
\right)
\]
Also notice that $a_k=1$ as $A$ is invertible.

Let $w = \xi^{s_1}\eta_{r_1}\cdots \xi^{s_p}\eta_{r_p}$The score of $\B(w,B)$ is given by
\begin{center}
\begin{tikzpicture}
\score{0.6}{10.4}
\hookbet{0.4}{$s_p$}{$r_p$};
\hookmix{1.2};
\hookb{2}{$r_{p-1}$};
\draw (2,0) -- (2.2,0.3);
\draw [dotted] (2.3,0.45) -- (2.5,0.75);
\draw (2.6,0.9) -- (2.8,1.2);
\hookt{2.8}{$s_1$};
\hookmix{2.8};
\hookbet{3.6}{$s_p$}{$r_p$};
\hookmix{4.4};
\hookb{5.2}{$r_{p-1}$};
\draw (5.2,0) -- (5.4,0.3);
\draw [dotted] (5.5,0.45) -- (5.7,0.75);
\draw (5.8,0.9) -- (6,1.2);
\hookt{6}{$s_1$};
\draw (6,1.8) -- (6.2,1.5);
\draw [dotted] (6.3,1.35) -- (6.5,1.05);
\node at (6.8,0.9) {$\cdots$};
\draw (7.6,0.6) -- (7.4,0.9);
\draw [dotted] (7.3,1.05) -- (7.1,1.35);
\hookbet{7.6}{$s_p$}{$r_p$};
\hookmix{8.4};
\hookb{9.2}{$r_{p-1}$};
\draw (9.2,0) -- (9.4,0.3);
\draw [dotted] (9.5,0.45) -- (9.7,0.75);
\draw (9.8,0.9) -- (10,1.2);
\hookt{10}{$s_1$};
\draw (10,1.8) -- (0.4,0.6);
\draw (10,1.8) -- (3.6,0.6);
\end{tikzpicture}
\end{center}
and there is an obvious partition of $\B(w,B)$ into $k+1$ parts $\cP_0,\ldots,\cP_k$ such that every $w(\cP_i)=\,\!^{s_1}\eta_{r_1}\cdots \xi^{s_p}\eta_{r_p}$. We assume they are ordered so that there is a $\xi$-edge going out of $\alpha(\cP_0)$ into $\omega(\cP_i)$ if and only if $a_i=1$.

Now split every part $\cP_i$ into two parts $\cP_i'$ and $\cP_i''$ such that 
\[
w(\cP_i') = \,\!^{s_1}\eta_{r_1}\cdots \xi^{s_{p-1}}\eta_{r_{p-1}}
\]
and 
\[
w(\cP_i'')= \,\!^{s_p}\eta_{r_p}.
\]
Let $i < k$ be such that $a_i=1$. We can apply Proposition \ref{prop:duking_two} (1) with $\cP_i''$ and $\cP_k''$. Then the $\xi$-edge going out of $\alpha(\cP_0')$ and into $\omega(\cP_i'')$ is no longer there, and we get an additional $\xi$-edge going out of $\alpha(\cP_k'')$ and going into $\omega(\cP_i')$.
After doing this for all $i=0,\ldots,k-1$ there is only one $\xi$-edge going out of $\alpha(\cP_0')$ which goes into $\omega(\cP_k'')$. Also, there is a $\xi$-edge going out of $\alpha(\cP_k'')$ and into $\omega(\cP_i')$ if and only if $a_i=1$.

We can therefore combine the parts $\cP_0'$ and $\cP_k''$ to a part $\tilde{\cP}_0$ such that
\[
 w(\tilde{\cP}_0) = \,\!^{s_p}\eta_{r_p}\xi ^{s_1}\eta_{r_1}\cdots \xi^{s_{p-1}}\eta_{r_{p-1}}
\]
Similarly for $i=1,\ldots,k$ we can combine the parts $\cP_i'$ and $\cP_{i-1}''$ to a part $\tilde{\cP}_i$ with $w(\tilde{\cP}_i)=w(\tilde{\cP}_0)$ and the framed flow category is a Baues--Hennes flow category for the cyclic word $(\xi w(\tilde{\cP}_0),B)$. It follows that $\B(w,A)$ is move equivalent to any $\B(w',A')$ where $(w,A)$ and $(w',A')$ are equivalent.
\end{proof}

\begin{lemma}\label{lem:spec_eps}
	Let $w=\bar{u}\varepsilon v$ be a non-special $\varepsilon$-word. Then $\B(w)$ is move equivalent to $\B(u)\sqcup \B(v)$, where we interpret $\B(\emptyset)$ as a sphere flow category of appropriate degree in case $u$ or $v$ is the empty word. 
\end{lemma}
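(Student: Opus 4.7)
The plan is a case analysis on how a non-special $\varepsilon$-word $w = \bar u\varepsilon v$ can fail to be special. Using that the first entries of $\sigma$ and $\rho$ take values in $\{2,4,8,\ldots,\infty\}$, the negation of the three conditions in Definition \ref{def:special_word} splits the non-special $\varepsilon$-words into three disjoint cases: (I) $\sigma_1(\xi v) = \rho_1(\eta u) = 2$; (II) $\sigma_1(\xi v) = 2$, $\rho_1(\eta u)\geq 4$ and $\rho^*(v') \geq \rho(\eta u)$, where $v = \,\!^2 v'$; and (III) the mirror of (II), with the roles of $u$ and $v$ exchanged. In each case my goal will be to produce an explicit sequence of moves killing the $\varepsilon$-edge and separating the flow category into $\B(u)\sqcup\B(v)$, after which Lemma \ref{lem:def_BH} applied to each factor separately handles the remaining $2$-dimensional moduli spaces (including the interpretation $\B(\emptyset) = \Sp$).

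For case (I), the word $w$ contains the sub-word $_2\varepsilon^2$ at the centre, and I will invoke Example \ref{exam:no2eps2} directly; as explicitly observed at the end of that example, its argument applies to any $\varepsilon$-Baues--Hennes flow category containing $_2\varepsilon^2$, and the Whitney trick on the two oppositely framed central points kills the $\varepsilon$-edge and disconnects the two sides. For case (II), I will generalise Example \ref{exam:Baues_style}: slide the top object of the $\varepsilon$-edge twice over the adjacent length-one edge lying just inside $v$, which produces two non-trivially framed circles together with an extra copy of $\varepsilon$; two successive extended Whitney tricks then remove the pair of circles and the pair of $\varepsilon$'s, destroying the original $\varepsilon$-edge at the cost of stray $\eta$-edges reaching across into the $u$-side. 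Case (III) is handled by the mirror sequence of moves.

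The main obstacle will be the clean-up phase in cases (II) and (III). After the $\varepsilon$-edge is killed, the stray edges it leaves behind have to be absorbed without re-establishing any link between the $u$- and $v$-sides. This is precisely what the hypotheses $\rho^*(v') \geq \rho(\eta u)$ and, dually, $\sigma^*(u') \geq \sigma(\xi v)$ are designed for: they guarantee that at every subsequent step a further handle slide can be performed along the $u$-chain (respectively the $v$-chain) using Lemmas \ref{lem:hook_be} and \ref{lem:hook_te}, and that the framed circles thereby produced are always null-cobordant relative to the boundary, so that one more extended Whitney trick removes them. Setting up the correct induction on the length of $v'$ (respectively $u'$), and verifying that the terminal flow category has no $0$- or $1$-dimensional moduli space connecting $\B(u)$ to $\B(v)$, will be the most delicate step of the argument.
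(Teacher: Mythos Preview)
Your case split and overall strategy match the paper's, and the handling of case~(I) via Example~\ref{exam:no2eps2} is exactly right. However, your description of the key slide in case~(II) is off. Example~\ref{exam:Baues_style} treats the situation $u = {}_2\xi\cdots$ (which is your case~(III)) and slides the top object of the $\varepsilon$-edge over the level-$3$ object on the \emph{$u$-side}; there is nothing ``just inside $v$'' at level~$3$ to slide over. For your case~(II), with the $2$ sitting at level $3$--$2$ on the $v$-side, the correct dual is a slide at level~$0$: write $v = {}^2\eta v''$ and slide (twice) between $\alpha(\B(u))$ and $\alpha(\B(v''))$, the latter being the level-$0$ object at the bottom of the first $\eta$-edge in $v$. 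This produces two non-trivially framed circles from the level-$2$ object of $v$ into $x_0$, and the extended Whitney trick on those kills the $\varepsilon$ as in Example~\ref{exam:Baues_style}.

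The residue is then not ``stray $\eta$-edges reaching into the $u$-side'' but a $0$-dimensional edge with $2q_1'$ points crossing from the level-$1$ object of $v''$ into $x_0$. Restoring primary Smith normal form requires a handle slide at level~$1$, and this is precisely where $\rho_1^*(v') \geq \rho_1(\eta u)$ enters (you need $2q_1' \geq q_1$). If the inequality is strict you slide an even number of times and are done; if $2q_1' = q_1$ the slide leaves a single residual $\xi$-edge. The paper disposes of this $\xi$-edge in one stroke via Proposition~\ref{prop:duking_one}, using that the remaining entries of $\rho^*(v') \geq \rho(\eta u)$ translate into $\B(v''') \preccurlyeq \B(u')$; your plan to run a direct induction on the length of $v'$ using Lemmas~\ref{lem:hook_be} and~\ref{lem:hook_te} amounts to re-deriving that proposition in this special case.
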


\begin{proof}
We have $u=\,\!_2u'$ for some basic or empty word $u'$, or $v=\,\!^2v'$ for some basic or empty word. We shall assume the latter, the former case will follow by the dual argument. We can also assume that $\rho_1(\eta u)\geq 4$, for otherwise the result follows from Example \ref{exam:no2eps2}. If $v'$ is the empty word, this assumption makes $w$ special, so we can assume $v'\not=\emptyset$. If $v'=\eta$, the result follows from Example \ref{exam:Baues_style} (or rather from its dual).

That $w$ is not special means that either $\rho^\ast(v') = \rho(\eta u)$ or there is an integer $k\geq 1$ with $\rho^\ast_m(v') = \rho_m(\eta u)$ for all $m < k$, and $\rho^\ast_k(v')> \rho_k(\eta u)$.

Note that we have two parts $\B(v'')$ and $\B(u)$ as subcategories of $\B(w)$, where $v''$ is the word such that $v'=\eta v''$. If we slide $\alpha(\B(u))$ twice over $\alpha(\B(v''))$, we get
\begin{center}
\begin{tikzpicture}
\score{0.6}{3.8}
\hookbet{0.6}{$2$}{$q_1'$};
\draw (0.6,0.6) -- (0.4,0.9);
\draw [dotted] (0.3,1.05) -- (0.1,1.35);
\draw (1.4,1.8) -- (2.6,0);
\hookmib{2.6}{$q_1\geq 4$};
\draw (2.6,0.6) -- (2.8,0.9);
\draw [dotted] (2.9,1.05) -- (3.1,1.35);
\node at (4.2,0.9) {$\sim$};
\scorex{0.6}{3.8}{4.6}
\hookbet{5.2}{$2$}{$q_1'$};
\draw (5.2,0.6) -- (5,0.9);
\draw [dotted] (4.9,1.05) -- (4.7,1.35);
\draw (6,1.8) -- (7.2,0);
\hookmib{7.2}{$q_1\geq 4$};
\draw (7.2,0.6) -- (7.4,0.9);
\draw [dotted] (7.5,1.05) -- (7.7,1.35);
\draw (7.2,0) -- (5.2,0.6);
\draw [dashed] (7.2,0) -- (6,1.2);
\node at (5.7,0.15) {$2q_1'$};
\end{tikzpicture}
\end{center}
where the dashed line indicates two non-trivially framed circles in a $1$-dimensional moduli space. Note that performing an extended Whitney trick on this moduli space will also remove the $\varepsilon$-edge as in Example \ref{exam:Baues_style}.

To get the resulting framed flow category back into primary Smith normal form, we need to perform another handle slide at level $1$. For this to work, we need $2q_1' \geq q_1$ which is exactly the condition $\rho_1^\ast(v') \geq \rho_1(\eta u)$. If $2q_1' > q_1$ we need to do this handle slide an even number of times, leading to the required result.

If $2q_1'= q_1$, the score after this handle slide and the cancellation of the $\varepsilon$-edge is
\begin{center}
\begin{tikzpicture}
\score{0.6}{4.4}
\draw [dotted] (0.1,0.45) -- (0.3,0.75);
\draw (0.4,0.9) -- (0.6,1.2);
\hookt{0.6}{$p_2'$};
\hookmix{0.6};
\hookbet{1.4}{$2$}{$q_1'$};
\hookmib{3}{$q_1$};
\hookx{3};
\hookmit{3.8}{$p_2$};
\draw (0.6,1.8) -- (3,0.6);
\draw (3.8,1.2) -- (4,0.9);
\draw [dotted] (4.1,0.75) -- (4.3,0.45);
\end{tikzpicture}
\end{center}
noting that if $v'' = \,\!_{q_1'}$ the two $\xi$-edges are not there and we are done, and if $v''=\,\!_{q_1'}\xi v'''$ the condition $\rho^\ast(v')\geq \rho(\eta u)$ implies that there is a $\xi$ in $u$ and $p_2 \geq p_2'$.

Let the word $u'$ satisfy $u = \,\!_{q_1}\xi u'$. We then have two parts $\B(v''')$ and $\B(u')$ and the condition $\rho^\ast(v')\geq \rho(\eta u)$ implies $\B(v''')\preccurlyeq \B(u')$. By Proposition \ref{prop:duking_one} we can remove the $\xi$-edge and get that $\B(w)\sim \B(u)\sqcup \B(v)$. Note that if $w(\B(u'))=w(\B(v'''))\xi u''$ we can still use Proposition \ref{prop:duking_one} following Construction \ref{rem:no_worries}, as there are no $\xi$-edges going into $\omega(\B(v'''))$.
\end{proof}

\begin{proposition}\label{prop:fin_BH}
Let $\cC$ be a framed flow category in almost Baues--Hennes form. Then $\cC$ is move equivalent to a framed flow category in Baues--Hennes form.
\end{proposition}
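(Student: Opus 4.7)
The hypothesis of almost Baues--Hennes form means the score $\Sigma(\cC)$ has already been arranged to match that of some disjoint union of Moore, Chang, and Baues--Hennes flow categories, so all $0$- and $1$-dimensional moduli spaces are in the desired shape. My plan is to work through three steps to fix up the $2$-dimensional data and to arrange specialness, handling one score component at a time.

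First, for each connected score component matching a Baues--Hennes pattern for a basic, central, $\varepsilon$-, or cyclic word, I will invoke Lemma \ref{lem:def_BH} to replace the $2$-dimensional moduli spaces internal to that component by the canonical ones prescribed by the word. Moore and Chang components require nothing at this stage. After this normalization, the only remaining non-canonical data are $2$-dimensional moduli spaces $\M(a,d)$ with $|a|=|d|+3$ whose endpoints lie in distinct components of the score.

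Second, any such non-trivial $\M(a,d)$ (necessarily $\varepsilon$ up to framed cobordism relative to the boundary) forces us to view its two ambient components as a single $\varepsilon$-Baues--Hennes flow category for some word $\bar u\varepsilon v$. If this word is special per Definition \ref{def:special_word}, we retain it. Otherwise I will apply Lemma \ref{lem:spec_eps} to obtain a move equivalence to $\B(u)\sqcup \B(v)$, which removes the $\varepsilon$-edge; I then loop back to the first step on the resulting disconnected pieces. Each iteration strictly decreases the number of cross-component $\varepsilon$-edges, so the process terminates.

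Third, for each cyclic component $\B(w,A)$, I will use Proposition \ref{prop:duking_one}(1) to replace $A$ by any similar matrix over $\F_2$ and so split it into indecomposable blocks via primary decomposition; each block yields an independent cyclic component. If $w=(w')^j$ for $j>1$, I will rewrite $\B(w,A)$ as $\B(w',A')$ for an enlarged matrix $A'$ obtained by unfolding the periodic structure, strictly shortening the word, then iterate until the word is not a proper power. The main obstacle will be the bookkeeping: ensuring that normalizing $2$-dimensional moduli spaces or splitting an $\varepsilon$-word in one location does not disturb specialness already achieved elsewhere. This is controlled because Lemmas \ref{lem:def_BH} and \ref{lem:spec_eps} act locally within the parts they touch, and by processing cross-component $\varepsilon$-edges before addressing cyclic matrix similarity.
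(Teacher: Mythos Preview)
Your Step 2 contains a genuine gap. You assert that any cross-component $\varepsilon$-moduli space $\M(a,d)$ ``forces us to view its two ambient components as a single $\varepsilon$-Baues--Hennes flow category for some word $\bar u\varepsilon v$.'' But an $\varepsilon$-word requires $a$ to be the start object of a basic-word piece (playing the role of $x_1$ with $|x_1|=n+3$) and $d$ to be the start object of another basic-word piece (playing the role of $x_0$ with $|x_0|=n$). In general $a$ could be an interior object of its component, or its component could be a cyclic or central Baues--Hennes category, and then no $\varepsilon$-word exists. The paper's proof deals with exactly this: it first observes that whenever there is a $\xi$-edge going out of $a$ or an $\eta$-edge going into $d$, one can remove $\varepsilon$ from $\M(a,d)$ by Lemma~\ref{lem:trick2}. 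Only after exhausting this do the remaining cross-component $\varepsilon$'s necessarily connect start objects of basic-word pieces, and only then does the $\varepsilon$-word combination (and the appeal to Lemma~\ref{lem:spec_eps} in the non-special case) make sense. Without this preliminary reduction your Step 2 cannot be carried out.

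There is also a smaller issue in Step 1. Invoking Lemma~\ref{lem:def_BH} on a single score component is not purely local: the moves used there (in particular Lemma~\ref{lem:trick2}) can alter $2$-dimensional moduli spaces $\M(a,d)$ whenever some $\M(x,b)$ or $\M(c,y)$ is non-empty for objects in other components, so you may disturb the cross-component $\varepsilon$'s you intend to handle in Step 2. The paper avoids this by not attempting to normalize internal $2$-dimensional data first; it processes the cross-component $\varepsilon$'s directly by induction on the number of pieces, and appeals to Lemma~\ref{lem:def_BH} only at the very end (in the proof of Theorem~\ref{thm:main}) once the pieces are genuinely disjoint.
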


\begin{proof}
Since $\cC$ is in almost Baues--Hennes form, there exist finitely many full subcategories $\B_1,\ldots,\B_k$ with each $\B_i$ an elementary Moore flow category or a Baues--Hennes flow category for some basic, central or cyclic word $w_i$, and every object of $\cC$ is in exactly one $\B_i$. Furthermore, the only non-empty moduli spaces $\M(x,y)$ with $x$ and $y$ in different $\B_i$ are $2$-dimensional. We can assume that such a moduli space then consists of a non-trivially framed torus. Also, if $\widetilde{H}_\ast(\cC)$ contains odd torsion, it will come from a disjoint summand Moore flow category.

We prove the proposition by induction on $k$. If $k=1$ we are done unless $w_1$ is a non-special cyclic word. But a Baues--Hennes flow category of a non-special cyclic word can be written as a disjoint union of Baues--Hennes flow categories of special cyclic words.

For the induction step, pick $\B_1$ and let $\M(a,d)=\varepsilon$ be a non-empty $2$-dimensional moduli space with $|\Ob(\B_1)\cap \{a,d\}|=1$. Without loss of generality assume $a\in \Ob(\B_1)$ and $d\in \Ob(\B_l)$ with $l > 1$. If there is a $\xi$-edge going out of $a$, or an $\eta$-edge going into $d$, we can remove $\varepsilon$ from $\M(a,d)$ using Lemma \ref{lem:trick2}. If this is not possible, then $a$ and $d$ have to be the start objects of their respective Baues--Hennes flow category. It is easy to see that neither $\B_1$ nor $\B_l$ can be Baues--Hennes flow categories of a central word or a cyclic word in this case.

So the only object in $\B_1$ which may have an $\varepsilon$ moduli space is the start object $\alpha(\B_1)$, and $w_1$ is a basic word, and in that case we can combine it with $\B_l$ for some $l>1$ where the same applies. We then can combine $\B_1$ and $\B_l$ to a Baues--Hennes flow category of an $\varepsilon$-word. If this word is not special, we can use Lemma \ref{lem:spec_eps} to split it back so that there are no $\varepsilon$ moduli spaces in $\B_1$. Then $\cC$ is move equivalent to $\B_1 \sqcup \cC'$, where $\cC'$ has the subcategories $\B_2,\ldots,\B_k$ so that we can use induction on $\cC'$. 

If there are no $\varepsilon$ moduli spaces involving objects in $\B_1$, then $\B_1$ also splits off as a disjoint summand and with the argument for the $k=1$ case we get the result.
\end{proof}

We are almost in a position to prove Theorem~\ref{thm:main}. It only remains to show the following result, foreshadowed in Remark~\ref{rem:promise}.

\begin{definition} Let $n\geq 4$. When $w$ is a basic, central or $\varepsilon$-word, we write $X(w)$ for the CW complex described by Baues--Hennes in \cite[Definition~3.5]{BauHen}, whose homology is supported in degrees $n,\dots,n+3$. When $(w,A)$ is a cyclic word, we write $X(w,A)$ for the CW complex described by Baues--Hennes in \cite[Definition~3.7]{BauHen}, whose homology is supported in degrees $n,\dots,n+3$
\end{definition}

\begin{proposition}\label{prop:refereeaddition} Let $n\geq 4$. When $w$ is a special basic, central or $\varepsilon$-word, there is respectively a basic, central or $\varepsilon$-Baues--Hennes flow category $\cC(w)$ of the word $w$, with $\mathcal{X}(\cC(w))\simeq\Sigma^\infty X(w)$. When $(w,A)$ is a special cyclic word, there is a cyclic Baues--Hennes flow category $\cC(w,A)$ of the word $(w,A)$, with $\mathcal{X}(\cC(w,A))\simeq\Sigma^\infty X(w,A)$.
\end{proposition}

\begin{proof} For $w$ and $(w,A)$ as in the proposition statement, we may apply Proposition~\ref{prop:CWisflow} to the respective spaces $X(w)$ and $X(w,A)$ to obtain framed flow categories $\cC(w)$ or $\cC(w,A)$ with $\mathcal{X}(\cC(w))\simeq\Sigma^\infty X(w)$ or $\mathcal{X}(\cC(w,A))\simeq\Sigma^\infty X(w,A)$ respectively. These framed flow categories are of homological width 3, so by Theorem~\ref{thm:chang}, Proposition~\ref{prop:almost_BH} and Proposition~\ref{prop:fin_BH}, we may assume that such $\cC(w)$ or $\cC(w,A)$ are in Baues--Hennes form.

We will now argue that because $\cC(w)$ is in Baues--Hennes form such that $\mathcal{X}(\cC(w))\simeq\Sigma^\infty X(w)$, we may conclude that $\cC(w)$ is a Baues--Hennes flow category of the word $w$. We make the similar claim for the case of cyclic words.

An $(n-1)$-connected $(n+3)$-dimensional CW complex $X$ determines a collection of data called an \emph{$A^3$-cohomology system}~\cite[Definition~8.5.6]{MR1404516}. Roughly, these data are comprised of: the cohomology groups of $X$ with $\Z$, $\Z/2$, and $\Z/4$ coefficients; maps $\mu_2^4$ and $\mu_4^2$ in the Bockstein long exact cohomology sequence associated to
\[
0\to \Z/2\xrightarrow{\mu_4^2}\Z/4\xrightarrow{\mu_2^4}\Z/2\to 0;
\]maps from the universal coefficient spectral sequences of $X$; first and second Steenrod square operations on $X$; and certain secondary cohomology operations of Adem type  (see~\cite[p.~273 (4)$'$, (5)$'$]{MR1404516})
\[
\begin{array}{rcl}
H^n(X;\Z/2)\supset \ker(\Sq^2) &\xrightarrow{\phi_4^2} &H^{n+3}(X;\Z/4)/\operatorname{im}(\mu_4^2\circ \Sq^2),\\
H^n(X;\Z/4)\supset \ker(\Sq^2\circ\mu_2^4) &\xrightarrow{\phi^4_2} &H^{n+3}(X;\Z/2)/\operatorname{im}(\Sq^2).
\end{array}
\]
By work of Chang and J\"{a}schke, this data set is enough to determine $X$ up to homotopy, when $n\geq 4$; see \cite[Theorem 8.5.7]{MR1404516}.

By construction of the spaces $X(w)$ and $X(w,A)$, the cohomology and second Steenrod square operations are straightforward to determine; cf.~\cite[\textsection 10.2.14]{MR1404516}. Combining this knowledge with the fact that $\cC(w)$ and $\cC(w,A)$ are disjoint unions of Moore, Chang and Baues--Hennes flow categories of special words, the scores of $\cC(w)$ and $\cC(w,A)$ are then seen to be precisely the scores of the Baues--Hennes flow categories associated to the respective words $w$ and $(w,A)$. When $w$ is not an $\varepsilon$-word, the score inquestion is connected, and this completes the proof.

In the case that $w$ is a $\varepsilon$-word, the score of $\cC(w)$ has exactly two connected components, each of which is the graph of a basic word (or dual of a basic word; see Remark~\ref{rem:confusion}). Suppose, for a contradiction, that $\cC(w)$ is not a Baues--Hennes flow category of $w$. Then the other possibility is that it is a disjoint union of two flow categories $\cC_1$ and $\cC_2$ that are each a basic Baues--Hennes, a Moore, or a Chang flow category. As they are disjoint, there is no cohomology operation from the cohomology of $\cC_1$ to $\cC_2$, or vice versa. But for a special $\varepsilon$-word, one of the secondary cohomology operations $\phi_2^4$, $\phi^2_4$ will nontrivially relate the cohomology of $\cC_1$ to $\cC_2$, or vice versa; see \cite[\textsection 10.2.15]{MR1404516}. This contradiction shows $\cC(w)$ is a Baues--Hennes flow category of $w$.

\end{proof}

\begin{proof}[Proof of Theorem \ref{thm:main}]
	By Theorem \ref{thm:chang}, Proposition \ref{prop:almost_BH} and Proposition \ref{prop:fin_BH}, we can get both $\cC_1$ and $\cC_2$ into Baues--Hennes form. That is, each of $\cC_1$ and $\cC_2$ is a disjoint union of elementary Moore flow categories, Chang flow categories and Baues--Hennes flow categories of special words. The stable homotopy types $\mathcal{X}(\cC_1)\simeq\mathcal{X}(\cC_2)$ thus decompose into respective wedge products. By Proposition~\ref{prop:refereeaddition}, these are wedge products of Moore, Chang and Baues--Hennes CW spectra. As such a decomposition is unique \cite{BauHen}, so the decompositions into Moore, Chang and Baues--Hennes flow categories for $\cC_1$ and $\cC_2$ agree. Finally, by Lemma \ref{lem:def_BH} and Lemma \ref{lem:spec_cyc}, the flow categories $\cC_1$ and $\cC_2$ are move equivalent.
\end{proof}

\section{Algorithmic aspects and a Lipshitz-Sarkar homotopy type}
\label{sec:algorithmic}

As we noted in Remark \ref{rem:algo_Chang} there is an algorithm to turn a framed flow category in primary Smith normal form into Chang form. The proof of Proposition \ref{prop:almost_BH} gives an algorithm which turns a framed flow category in Chang form into almost Baues--Hennes form. The algorithm is in fact much easier than the proof suggests. One starts with the base partition $\mathcal{T}$, searches for $\cP_{\max}$ which is maximal with respect to $\preccurlyeq$, then searches for $\cP_{\omax}'$ which is maximal with respect to $\trianglelefteq$ among $C'$.

Depending on whether $\cP_{\max}\approx \cP_{\omax}'$ one then is either in Case 1 or 2. In Case 1 combine $\cP_{\omax}'$ and $\cP_{\max}$ to a new part by using Propositions \ref{prop:duking_one}, \ref{prop:duking_two} and \ref{prop:duking_three}. Note that these propositions consist of an obvious order of handle slides. In Case 2 form the matrix $A$ and turn it into the form $B$. This can be done algorithmically, and such that we get the order for the appropriate handle slides. The two subcases are slightly different, with the first requiring again to improve a matrix, and the second being very similar to Case 1.

We then have a partition which has fewer parts, and we can repeat the argument. From now on we may run into the Problem discussed in Remark \ref{rem:no_worries}, but since the new partition is admissible, we can perform handle slides as before. Note that we do not need to check whether the new partition is admissible, this was done in the proof of Proposition \ref{prop:almost_BH}.

After finitely many steps, our flow category is in almost Baues--Hennes form. The final step to get the flow category into Baues--Hennes form merely consists of dropping $\varepsilon$ in non-special $\varepsilon$-Baues--Hennes flow categories, as this algorithm will not produce non-special cyclic Baues--Hennes flow categories.

An important source of examples for framed flow categories was constructed by Lipshitz--Sarkar \cite{LipSarKhov} who for every link $L$ constructed a framed flow category $\cC_{Kh}(L)$ which is a disjoint union 
\[
\cC_{Kh}(L) = \coprod_{q\in \Z} \cC_{Kh}^q(L)
\]
such that $\widetilde{H}^\ast(\cC_{Kh}^q(L))$ agrees with the Khovanov cohomology of $L$ in quantum degree $q$.

The flow category $\cC_{Kh}(L)$ actually depends on a diagram for $L$, and the number of objects is exponential in the number of crossings. If the number of crossings is at least $2$, the flow category will not be in primary Smith normal form. While it is theoretically no problem to turn the flow category into primary Smith normal form, the sheer number of objects can make this a time-consuming process. Furthermore, even $1$-dimensional moduli spaces can get very large and complicated during this process. 

In \cite{ALPODSn} the authors develop an algorithm which turns a flow category into primary Smith normal form, although at the price of losing information about $2$-dimensional moduli spaces. This algorithm has been implemented by the third author into a computer program \cite{SchuetzKJ}. This program turns a flow category into Chang form, but does not turn it into almost Baues--Hennes form. It turns out that for those links where the flow category can be moved into Chang form in a reasonable amount of time, the number of objects in the Chang form is so small that it can be turned into almost Baues--Hennes form by hand with only a few moves.

In many cases losing information on $2$-dimensional moduli spaces does not prevent us from determining the Baues--Hennes type, as the presence of non-trivially framed circles can remove an $\varepsilon$ as in Example \ref{exam:remove_epsilon}. There are however simple examples where this does not work, for example in the $(5,4)$-torus knot at $q=19$ it is not known whether the Baues--Hennes form contains a $\B(\varepsilon)$ or is just a disjoint union of sphere flow categories.

\begin{example}
	Consider the knot $K=13n3663$, see Figure \ref{fig:13n3663}. According to XKnotJob \cite{SchuetzKJ}, the framed flow category $\cC^{1}_{Kh}(K)$ is move equivalent to a framed flow category with score
	\begin{center}
	\begin{tikzpicture}
	\score{0.6}{4}
	\hookbet{0.4}{$2$}{$2$};
	\hookx{0.4};
	\hookmix{1.2};
	\dotb{0.4};
	\dotc{2.8};
	\hookm{3.6}{$2$};
	\draw (1.2,1.8) -- (2.8,0.6);
	\node at (-0.8,0) {$h=-1$};
	\node at (-0.8,0.6) {$h=0$};
	\node at (-0.8,1.2) {$h=1$};
	\node at (-0.8,1.8) {$h=2$};
	\end{tikzpicture}
	\end{center}
	which is in Chang form, but not in Baues--Hennes form. However, it is easy to see that using two handle slides we can turn this into Baues--Hennes form, and thus
	\[
	\cC^1_{Kh}(13n3663) \sim \B((\xi^2\eta_2,(1)),-1) \sqcup \Sp^0\sqcup \Sp^0\sqcup \Mo(\Z/2\Z,0) \sqcup \Sp^1.
	\]
	We remark that this knot is the only prime knot with up to $13$ crossings that admits a Baues--Hennes flow category of width $3$.
\end{example}

\begin{figure}[ht]
	\includegraphics[height = 4cm]{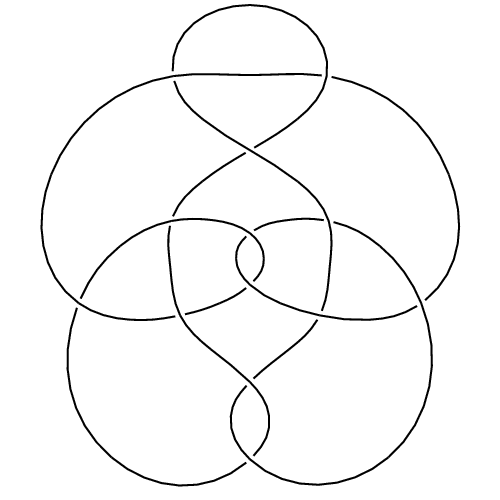}
	\caption{The knot $K=13n3663$.}
	\label{fig:13n3663}
\end{figure}

\begin{example}
	According to XKnotJob, the flow category $\cC^1_{Kh}(K)$ of the knot $K=14n8362$ is move equivalent to a flow category with score
	\begin{center}
	\begin{tikzpicture}
	\score{0.6}{4}
	\hookem{0.4}{};
	\hookm{0.4}{$2$};
	\hookx{1.2};
	\hookmix{2};
	\node at (1.1,0.9) {$2$};
	\node at (1.9,1.5) {$2$};
	\hookt{2}{};
	\hookmim{3.6}{$2$};
	\hookmix{2.8};
	\dotb{2.8};
	\node at (-0.8,0) {$h=5$};
	\node at (-0.8,0.6) {$h=6$};
	\node at (-0.8,1.2) {$h=7$};
	\node at (-0.8,1.8) {$h=8$};
	\end{tikzpicture}
	\end{center}
	which is again in Chang form, but not in Baues--Hennes form. One handle slide shows that 
	\[
	\cC^1_{Kh}(14n8362) \sim \cC(\eta2,5) \sqcup \cC(_2\eta,6) \sqcup \cC(\eta2,6) \sqcup \Mo(\Z/2\Z,6) \sqcup \Sp^7.
	\]
	There are $14$ prime knots with $14$ crossings that contain a Baues--Hennes flow category of width $3$. In fact, all of them contain $\B((\xi^2\eta_2,(1)),n)$ for some $n\in \Z$. The knot $14n8362$ was a candidate according to its Khovanov cohomology, but only contains Chang and Moore flow categories, as revealed by the computer programme and the subsequent handle slides.
\end{example}
More information on calculations can be found in \cite{ALPODSn}.


\bibliographystyle{amsalpha}
\def\MR#1{}
\bibliography{BH_references}

\providecommand{\bysame}{\leavevmode\hbox to3em{\hrulefill}\thinspace}
\providecommand{\MR}{\relax\ifhmode\unskip\space\fi MR }
\providecommand{\MRhref}[2]{%
  \href{http://www.ams.org/mathscinet-getitem?mr=#1}{#2}
}
\providecommand{\href}[2]{#2}
\begin{thebibliography}{LOS20}

\bibitem[Bau95]{Baues}
H.J. Baues, \emph{Homotopy types}, Handbook of algebraic topology,
  North-Holland, Amsterdam, 1995, pp.~1--72.

\bibitem[Bau96]{MR1404516}
Hans-Joachim Baues, \emph{Homotopy type and homology}, Oxford Mathematical
  Monographs, The Clarendon Press, Oxford University Press, New York, 1996,
  Oxford Science Publications. \MR{1404516}

\bibitem[BH91]{BauHen}
H.J. Baues and M.~Hennes, \emph{The homotopy classification of
  {$(n-1)$}-connected {$(n+3)$}-dimensional polyhedra, {$n\geq 4$}}, Topology
  \textbf{30} (1991), no.~3, 373--408.

\bibitem[BLL65]{MR189046}
W.~Browder, J.~Levine, and G.~R. Livesay, \emph{Finding a boundary for an open
  manifold}, Amer. J. Math. \textbf{87} (1965), 1017--1028. \MR{189046}

\bibitem[Cha50]{Chang}
S.-C. Chang, \emph{Homotopy invariants and continuous mappings}, Proc. Roy.
  Soc. London. Ser. A. \textbf{202} (1950), 253--263.

\bibitem[CJS95]{CJS}
R.~Cohen, J.D.S. Jones, and G.B. Segal, \emph{Floer's infinite dimensional
  {M}orse theory and homotopy theory}, The Floer Memorial Volume, Prog. in
  Math., vol. 133, Birkh\"auser Verlag, 1995, pp.~297--325.

\bibitem[Coh74]{MR0360046}
P.~M. Cohn, \emph{Algebra, {V}ol. 1}, John Wiley \& Sons, London-New
  York-Sydney, 1974.

\bibitem[Coh09]{MR2597734}
R.~L. Cohen, \emph{Floer homotopy theory, realizing chain complexes by module
  spectra, and manifolds with corners}, Algebraic topology, Abel Symp., vol.~4,
  Springer, Berlin, 2009, pp.~39--59. \MR{2597734}

\bibitem[Coh19]{cohenrevisit}
R.~L. Cohen, \emph{Floer homotopy theory, revisited}, arXiv e-print 1901.08694
  (2019).

\bibitem[Hud69]{MR0248844}
J.~F.~P. Hudson, \emph{Piecewise linear topology}, University of Chicago
  Lecture Notes prepared with the assistance of J. L. Shaneson and J. Lees, W.
  A. Benjamin, Inc., New York-Amsterdam, 1969. \MR{0248844}

\bibitem[J{\"a}n68]{ich}
K.~J{\"a}nich, \emph{On the classification of {O}(n)-manifolds}, Math. Ann.
  \textbf{176} (1968), 53--76.

\bibitem[JLS17]{JLS2}
D.~Jones, A.~Lobb, and D.~Sch\"utz, \emph{Morse moves in flow categories},
  Indiana Univ. Math. J. \textbf{66} (2017), no.~5, 1603--1657. \MR{3718437}

\bibitem[Kho00]{kh1}
M.~Khovanov, \emph{A categorification of the {J}ones polynomial}, Duke Math. J.
  \textbf{101} (2000), no.~3, 359--426.

\bibitem[Lau00]{Laures}
G.~Laures, \emph{On cobordism of manifolds with corners}, Trans. Amer. Math.
  Soc. \textbf{352} (2000), no.~12, 5667--5688.

\bibitem[LOS18]{ALPODS2}
A.~Lobb, P.~Orson, and D.~Sch{\"u}tz, \emph{Framed cobordism and flow category
  moves}, Algebr. Geom. Topol. \textbf{18} (2018), no.~5, 2821--2858.
  \MR{3848400}

\bibitem[LOS20]{ALPODSn}
Andrew Lobb, Patrick Orson, and Dirk Sch\"{u}tz, \emph{Khovanov homotopy
  calculations using flow category calculus}, Exp. Math. \textbf{29} (2020),
  no.~4, 475--500. \MR{4165986}

\bibitem[LS14a]{LipSarKhov}
R.~Lipshitz and S.~Sarkar, \emph{A {K}hovanov stable homotopy type}, J. Amer.
  Math. Soc. \textbf{27} (2014), 983--1042.

\bibitem[LS14b]{LipSarSq}
\bysame, \emph{A {S}teenrod square on {K}hovanov homology}, J. Topol.
  \textbf{7} (2014), no.~3, 817--848.

\bibitem[Sch17]{SchuetzKJ}
D.~Sch\"utz, \emph{{XK}not{J}ob}, software (2017), Available at
  {www.maths.dur.ac.uk/{\textasciitilde }dma0ds/} {knotjob.html}.

\bibitem[Ser53]{Serre}
J.-P. Serre, \emph{Groupes d'homotopie et classes de groupes ab\'eliens}, Ann.
  of Math. (2) \textbf{58} (1953), 258--294.

\bibitem[Whi48]{MR0030758}
J.~H.~C. Whitehead, \emph{The homotopy type of a special kind of polyhedron},
  Ann. Soc. Polon. Math. \textbf{21} (1948), 176--186 (1949). \MR{0030758}

\end{thebibliography}

\end{document}